\newcommand{\bC}{{\mathbb C}}
\newcommand{\bG}{{\mathbb G}}
\newcommand{\bK}{{\mathbb K}}
\newcommand{\bF}{{\mathbb F}}
\newcommand{\bQ}{{\mathbb Q}}
\newcommand{\bZ}{{\mathbb Z}}
\newcommand{\bN}{{\mathbb N}}
\newcommand{\bU}{{\mathbb U}}
\newcommand{\fg}{{\mathfrak g}}
\newcommand{\fl}{{\mathfrak l}}
\newcommand{\fh}{{\mathfrak h}}
\newcommand{\fn}{{\mathfrak n}}
\newcommand{\fs}{{\mathfrak s}}
\newcommand{\fb}{{\mathfrak b}}
\newcommand{\gr}{{{\mbox{\rm gr}}}}
\newcommand{\ad}{{{\mbox{\rm ad}}}}
\newcommand{\Ad}{{{\mbox{\rm Ad}}}}
\newcommand{\Char}{{{\mbox{\rm char}}}}
\newcommand{\Lie}{{{\mbox{\rm Lie}}}} 
\newcommand{\Hom}{{{\mbox{\rm Hom}}}} 
\newcommand{\Irr}{{{\mbox{\rm Irr}}}}
\newcommand{\Id}{{{\mbox{\rm Id}}}}
\newcommand{\Img}{{{\mbox{\rm Im}}}}
\newcommand{\hcf}{{{\mbox{\rm hcf}}}}
\newcommand{\End}{{{\mbox{\rm End}}}}
\newcommand{\Di}{{{\mbox{\rm Dist}}}}
\renewcommand{\2}{_{(2)}}
\newcommand{\ve}{\ensuremath{\mathbf{e}}\xspace}
\newcommand{\vf}{\ensuremath{\mathbf{f}}\xspace}
\newcommand{\vh}{\ensuremath{\mathbf{h}}\xspace}
\newcommand{\twopartdef}[4]{\left\{
	\begin{array}{ll}
		#1 & \mbox{if } #2 \\
		#3 & \mbox{if } #4
	\end{array}
	\right.}
\newcommand{\threepartdef}[6]{\left\{
	\begin{array}{ll}
		#1 & \mbox{if } #2 \\
		#3 & \mbox{if } #4 \\
		#5 & \mbox{if } #6
	\end{array}
	\right.}
\newcommand{\Upr}{U^{[r]}(G)}
\newcommand{\Uprc}{U_\chi^{[r]}(G)}
\newcommand{\Dipri}{\Di^{+}_{p^r}(G)}
\newcommand{\Diprj}{\Di^{+}_{p^{r+1}-1}(G)}
\newtheorem{theorem}{Theorem}[subsection]
\newtheorem{prop}[theorem]{Proposition}
\newtheorem{lemma}[theorem]{Lemma}
\newtheorem*{dfn}{Definition}
\newtheorem{cor}[theorem]{Corollary}
\newtheorem{exa}{Example}
\newtheorem*{rmk}{Remark}
\newtheorem*{conj}{Conjecture}
\begin{document}
\title[Higher Deformations]{Higher Deformations of Lie Algebra Representations I}
\author{Matthew Westaway}
\email{M.P.Westaway@warwick.ac.uk}
\address{Department of Mathematics, University of Warwick, Coventry, CV4 7AL, UK}
\thanks{This paper has been accepted for publication in the Journal of the Mathematical Society of Japan.}
\date{January 7, 2019}
\subjclass[2010]{Primary 17B10; Secondary 17B35, 17B50}
\keywords{Frobenius kernel, distribution algebra, universal enveloping algebra, representation}
  
\begin{abstract}
	In the late 1980s, Friedlander and Parshall studied the representations of a family of algebras which were obtained as deformations of the distribution algebra of the first Frobenius kernel of an algebraic group. The representation theory of these algebras tells us much about the representation theory of Lie algebras in positive characteristic. We develop an analogue of this family of algebras for the distribution algebras of the higher Frobenius kernels, answering a 30 year old question posed by Friedlander and Parshall. We also examine their representation theory in the case of the special linear group.
\end{abstract}

\maketitle

\section{Introduction}
\label{sec1}
In 1988 and 1990, Eric Friedlander and Brian Parshall published a pair of papers \cite{PF2,PF1} which have had a great impact on our understanding of the representation theory of Lie algebras over algebraically-closed fields of positive characteristic. Prior to the publication of these papers, it was known that in characteristic $p>0$ many of the Lie algebras that were interesting to study came with a so-called $p$-structure: a map $\fg\to\fg$ which gave a notion of $p$-th powers to elements of $\fg$. When considering the $p$-structure-preserving representations of these Lie algebras, which are called restricted representations, it was discovered that these were in 1-1 correspondence with representations of the first Frobenius kernel $G_{(1)}$ of $G$, in the case when $\fg$ was the Lie algebra of an algebraic group $G$. 

Friedlander and Parshall, however, were interested in the general representation theory of $\fg$, rather than the restricted representation theory. Their method was to use an observation of Kac and Weisfeiler in \cite{KW2} that from the universal enveloping algebra $U(\fg)$ one could construct a family of algebras, which they denoted $A_\chi$ and are today written as $U_\chi(\fg)$, indexed by the linear forms on $\fg$.
Every irreducible (not necessarily restricted) $\fg$-module appears as a $U_\chi(\fg)$-module for some $\chi\in\fg^{*}$. In the case when $\chi=0$, one recovers precisely the restricted representation theory of $\fg$.

At the end of \cite{PF2}, Friedlander and Parshall pose a number of questions about these algebras and their representation theory. Question 5.4 in that paper, which was posed to them in turn by J. Humphreys, is as follows:

\textit{{\bf Hyperalgebra analogues.} Do the algebras $A_\chi$ have natural analogues corresponding to the infinitesimal group schemes $G_r$ associated to $G$ for $r>1$?}


It is this question which we answer here. To do such, we must first define and study a family of higher universal enveloping algebras $U^{[r]}(G)$ for $r\in\bN$, analogues of the universal enveloping algebra in these higher cases. When $r=0$, this algebra is precisely $U(\fg)$ (where $\fg=\Lie(G)$), and the family of algebras $\{\Upr\}_{r\in\bN}$ form a direct system with limit $\Di(G)$ (the distribution algebra of $G$ \cite{Jan}). This family of algebras was first introduced by Masaharu Kaneda and Jiachen Ye in \cite{KY}, however their study of it related primarily to its connection to the study of arithmetic differential operators \cite{Ber}. The sum and substance of their results on the structure of this algebra can be found in Subsection~\ref{sec2.2} of this paper, and this algebra has been minimally studied since then. Indeed, Kaneda and Ye's construction is not especially useful for the goals of this paper and we shall define the algebra $U^{[r]}(G)$ in a different way, before showing that these constructions are isomorphic in Subsection~\ref{sec3.3}.

The majority of the results in this paper are proved in the case when $G$ is a reductive algebraic group. This restriction is not unusual in this area of study -- indeed many of the most notable reviews of this subject make the same restriction fairly early on (see \cite{Hum,Jan2}). Nevertheless, this paper requires reductivity sooner than is typical, and in fact several of the results proved in this paper shall hold without this assumption.  This is proved in the sequel to this paper \cite{West}, where the Hopf algebra structure of the algebras $\Upr$ is studied in greater detail. This direction would appear to be the most fruitful in studying these algebras for more general algebraic groups.

When $G$ is reductive, the higher universal enveloping algebras $\Upr$ share many similarities with the universal enveloping algebras. They are finitely generated over their centre (Proposition~\ref{fingen}), all of their irreducible modules are finite-dimensional (Theorem~\ref{findim}), and they have a PBW basis (Proposition~\ref{basis}). In fact, there exist surjective Hopf algebra homomorphisms $\Upr\to U(\fg)$ for each $r\in\bN$ by Lemma~\ref{equiv} and Corollary~\ref{surj}. Furthermore, Lemma~\ref{powers} enables us to define a notion of $p$-th powers in these algebras, and hence to define the algebras $\Uprc$ indexed by $\chi\in\fg^{*}$. These $\Uprc$ are the analogues of the $U_\chi(\fg)$ in this higher setting, and every irreducible $\Upr$-module is an irreducible $\Uprc$-module for some $\chi\in \fg^{*}$ (Lemma~\ref{irred}).


In studying the representation theory of these $\Uprc$, one can define the notion of a \emph{higher baby Verma module} $Z_\chi^r(\lambda)$ analogously to the construction in the standard case. One obtains that every irreducible $\Uprc$-module is an irreducible quotient of a higher baby Verma module (Lemma~\ref{Verma}), however in comparison with the standard case these modules are often too large to pinpoint the irreducible modules explicitly. For example, when $G=SL_2$ and $\chi\neq 0$ the baby Verma modules are always irreducible -- this ceases to be true for the higher baby Verma modules. The irreducible modules for $U^{[r]}_\chi(SL_2)$ are characterised in Theorem~\ref{Class}, where we see that a different module construction, called \emph{teenage Verma modules}, behaves as the baby Verma modules do in the standard case. It is conjectured in Section~\ref{sec7} that these modules provide the correct analogue of baby Verma modules for the higher universal enveloping algebras -- these conjectures are proved in the sequel to this paper \cite{West}. 

One interesting feature of the higher universal enveloping algebras is that for $r\in\bN$ the finite-dimensional Hopf algebra $\Di(G_{(r)})$ is a normal Hopf subalgebra of $\Upr$ (here $G_{(r)}$ is the $r$-th Frobenius kernel of $G$ and is an infinitesimal group scheme). When $r=0$ this is automatic as $\Di(G_{(0)})=\bK$, but when $r>0$ this adds new complexity to these algebras. One application of this fact is Theorem~\ref{Decomp}, which shows that every irreducible $\Upr$-module $M$ contains a unique irreducible $\Di(G_{(r)})$-submodule $N$ and that $M\cong N\otimes V$ as $\Di(G_{(r)})$-modules for a finite-dimensional vector space $V$. This allows us to interpret Kac-Weisfeiler's second conjecture \cite{KW,Prem} in this context: if $M$ is a $\Uprc$-module for $\chi\in\fg^{*}$, does $p^{\dim (G\cdot\chi)/2}$ divide the dimension of $V$? We answer this in Theorem~\ref{prem} and in the sequel \cite{West}.

The structure of this paper is as follows. We start in Section~\ref{sec2} by recalling the various definitions of enveloping algebras for a Lie algebra over a field of characteristic $p>0$, as well as examining the different notions for differential operators in this context. Then, in Section~\ref{sec3}, we introduce the algebra $U^{[r]}(G)$ which we study for the rest of the paper. We develop the appropriate analogue of $p$-structure and $p$th powers in this context and construct a basis for the higher universal enveloping algebras. We restrict to reductive algebraic groups midway into this section. In Section~\ref{sec4} we show the connection between $U^{[r]}(G)$ and the standard universal enveloping algebra $U(\fg)$. We then move on to studying the representation theory of $U^{[r]}(G)$ in Section~\ref{sec6}, which allows us to define the family of algebras $U^{[r]}_\chi(G)$, as well as higher notions of baby Verma modules. In Section~\ref{sec7} we focus specifically on the case of $G=SL_2$ and try to understand the representation theory of the $U^{[r]}_\chi(SL_2)$; in particular seeing how it differs from the well-understood case $r=0$ as studied by Friedlander and Parshall \cite{PF2,PF1}. Finally, in Section~\ref{sec9} we give some results on the Hopf algebraic structure of the higher universal enveloping algebras.

I would like to thank my PhD supervisors Dmitriy Rumynin and Inna Capdeboscq for suggesting this topic to me and for their continued assistance with the production of this paper. I would also like to thank Lewis Topley for some valuable conversations regarding this paper. Furthermore, I would like to thank the referee for their valuable comments.

\section{Preliminaries}
\label{sec2}

\subsection{Universal Enveloping Algebras}
\label{sec2.1}

Let $G$ be an algebraic group over an algebraically closed field $\bK$ of characteristic $p>0$, and let $\fg=\Lie(G)$. In positive characteristic, there are several sensible notions for an enveloping algebra of $\fg$, all of which are isomorphic when the characteristic is zero. Let us briefly recall their constructions.

Firstly, we can construct the universal enveloping algebra
$$U(\fg)\coloneqq \frac{T(\fg)}{Q},$$
where $T(\fg)$ is the tensor algebra of $\fg$ and $Q$ is the 2-sided ideal generated by the elements $x\otimes y -y\otimes x - [x,y]$ for $x,y\in\fg$.

Since $\fg$ is constructed here as the Lie algebra of an algebraic group, it has a $p$-structure \cite{Jan2}. That is, there exists a map $\,^{[p]}:\fg\to\fg$ such that the map $\xi:\fg\to U(\fg)$ given by $x\mapsto x^p-x^{[p]}$ satisfies the following two conditions: (1) the image lies inside $Z(U(\fg))$, and (2) that $\xi(ax+by)=a^p\xi(x)+b^p\xi(y)$ for all $x,y\in\fg$, $a,b\in\bK$. This allows us to form the algebra
$$U_0(\fg)\coloneqq\frac{U(\fg)}{\langle x^p-x^{[p]}\,\vert\,x\in\fg\rangle},$$
called the restricted enveloping algebra of $\fg$. 

Let us now recall the definition of the distribution algebra $\Di(G)$. If $I_1\coloneqq \{f\in\bK[G]\,\vert\,f(1)=0\}$ (where $1$ is the identity of $G$), then we denote $\Di_k(G)\coloneqq\{\mu:\bK[G]\rightarrow \bK\,\vert\,\mu\,\mbox{is linear and}\,\mu(I_1^{k+1})=0\}$ and $\Di_k^{+}(G)\coloneqq\{\mu\in\Di_k(G)\,\vert\,\mu(1)=0\}$. We then denote $\Di(G)=\cup_{k\geq 0}\Di_k(G)$. 

$\Di(G)$ is an algebra, and $\fg$ lies inside $\Di(G)$ as $\Di_1^{+}(G)$. The Lie bracket on $\fg$ corresponds to the Lie bracket $[A,B]=AB-BA$ on $\Di(G)$. Recall that if $\mu\in\Di^{+}_i(G)$ and $\rho\in\Di^{+}_j(G)$ then $\mu\rho\in\Di^{+}_{i+j}(G)$ and $[\mu,\rho]\in\Di^{+}_{i+j-1}(G)$ \cite{Jan}.

The distribution algebra is related to the previous enveloping algebras by the following observation: $U_0(\fg)$ is isomorphic to $\Di(G_{(1)})$, where we denote by $G_{(1)}$ the first Frobenius kernel of $G$. Throughout the paper we shall more generally denote by $G_{(r)}$ the $r$th Frobenius kernel of $G$ (see \cite{Jan}). 

\subsection{Differential Operators}
\label{sec2.2}

When studying sheaves of differential operators on a smooth variety over an algebraically closed field of positive characteristic there are several distinct notions, which coincide in zero characteristic. Firstly, there are the differential operators constructed by Grothendieck in \cite{EGA}. The precise construction is omitted here, but the reader should consult \cite{EGA} for more detail. In particular, the sheaf $\mathcal{D}iff_{X/\bK}$ of these differential operators lies inside the sheaf $\mathcal{E}nd_{\bK}(\mathcal{O}_X)$.

This sheaf has a filtration
$$\mathcal{D}_{X/\bK}^{(0)}\to \mathcal{D}_{X/\bK}^{(1)} \to\ldots \to\mathcal{D}_{X/\bK}^{(m)}\to \ldots\to\mathcal{D}iff_{X/\bK}=\varinjlim\mathcal{D}_{X/\bK}^{(m)}$$ constructed by Berthelot in \cite{Ber}. This sheaf $\mathcal{D}_{X/\bK}^{(0)}$ is called the sheaf of {\em crystalline differential operators} and was constructed by Berthelot before the rest of the filtration was developed. The sheaf was used by Bezrukavnikov, Mirkovi\'{c} and Rumynin in \cite{BMR} where they use it to derive a version of Beilinson-Bernstein's localisation theorem in positive characteristic. The sheaves $\mathcal{D}_{X/\bK}^{(m)}$ are called the sheaves of {\em arithmetic differential operators}.

When $X=G$ is a smooth algebraic group we can compare the sheaves of differential operators with the above notions of universal enveloping algebras. In particular, there is an injective algebra homomorphism $\Di(G)\hookrightarrow \Gamma(G,\mathcal{D}iff_{G/\bK})$, which is an isomorphism onto the subalgebra of left invariant differential operators. See \cite[I.7.18]{Jan} for details. Similarly, there is an injective algebra homomorphism $U(\fg)\hookrightarrow \Gamma(G,\mathcal{D}_{X/\bK}^{(0)})$ which is an isomorphism onto the left invariant crystalline differential operators.

In trying to construct the analogues to the $U_\chi(\fg)$ from Friedlander and Parshall's question, one sees that the arithmetic differential operators should play a role. To work with arithmetic differential operators explicitly, it helps to recall from \cite{HJR} that
$$\mathcal{D}_{X/\bK}^{(m)}\cong \frac{T_{\bK}(\mathcal{D}iff^{2p^m-1})}{\langle \lambda-\lambda 1_{\mathcal{O}_X},\,\,\delta\otimes\delta' - \delta'\otimes \delta - [\delta,\delta'],\,\,\,\delta\otimes\delta''-\delta\delta''\,\,\vert\,\,\lambda\in\bK,\,\delta''\in\mathcal{D}iff^{p^m-1},\,\,\delta,\delta'\in\mathcal{D}iff^{p^m}\,\rangle},$$
where we denote by $\mathcal{D}iff^k$ the sheaf of differential operators of order $\leq k$.

Motivated by this, Kaneda and Ye defined in \cite{KY} the algebra
$$\bU^{(m)}\coloneqq \frac{T_{\bK}(\Di_{2p^m-1}(G))}{\langle \lambda-\lambda \epsilon_G,\,\,\delta\otimes\delta' - \delta'\otimes \delta - [\delta,\delta'],\,\,\,\delta\otimes\delta''-\delta\delta''\,\,\vert\,\,\lambda\in\bK,\,\delta''\in\Di_{p^m-1}(G),\,\,\delta,\delta'\in\Di_{p^m}(G)\,\rangle},$$
with $\epsilon_G$ the counit of $G$. They obtain, when $G$ is reductive, the following commutative diagram of $\bK[G]$-modules \cite[Cor 1.5]{KY}:
\begin{equation*} 
	\begin{CD}
		\bK[G]\otimes_\bK\bU^{(m)} @>{\sim}>> \Gamma(G,\mathcal{D}_{X/\bK}^{(m)})\\
		@VVV       @VVV \\
		\bK[G]\otimes_\bK\Di(G) @>{\sim}>> \Gamma(G,\mathcal{D}iff_{G/\bK})
	\end{CD}
\end{equation*}
with $\varinjlim\bU^{(m)}\cong \Di(G)$.

To be able to answer Parshall and Friedlander's question we need a slightly different construction of this algebra. We shall see that these constructions give isomorphic algebras in Section~\ref{sec3.3}.

\section{The Algebra $U^{[r]}(G)$}
\label{sec3}
\subsection{Filtered algebras} 
\label{sec3.1}
Before we get to the construction of the algebras $U^{[r]}(G)$ that we will be studying in this paper let us generalise slightly the situation we are considering, so that we can develop some notation and tools to work with in our particular circumstance. Suppose that $A$ is a filtered Hopf algebra $A=\cup_{k\in\bN}A_k$ with $A_0=\bK$ and such that the associated graded algebra $\gr(A)=\bigoplus_{k\in\bN}A_{k+1}/{A_{k}}$ is commutative (i.e. $[A_k,A_l]\subset A_{k+l-1}$ for all $k,l$). We shall denote $A_k^+\coloneqq A_k\cap\ker(\epsilon_A)$, where $\epsilon_A$ is the counit of $A$.

We can construct the following algebra.
$$U^{[k]}(A)\coloneqq\frac{T(A^+_k)}{Q_{k}},$$
where $Q_{k}$ is the ideal generated by the relations:

(i) $x\otimes y=xy$ if $x\in A^+_i$, $y\in A^+_j$ with $i+j<k+1$, and;

(ii) $x\otimes y-y\otimes x = [x,y]$ if $x\in A^+_i$, $y\in A^+_j$ with $i+j\leq k+1$.
\begin{dfn}
	Let $A$ be a filtered Hopf algebra $A=\cup_{k\in\bN}A_k$ satisfying the above conditions, and $B$ an associative $\bK$-algebra. We will call a $\bK$-linear map $\phi:A^{+}_k\to B$ an {\em indexed algebra subspace homomorphism} if $\phi(xy)=\phi(x)\phi(y)$ for all $x\in A^+_i$ and $y\in A^+_j $ with $i+j<k+1$, and $\phi([x,y])=[\phi(x),\phi(y)]$ for all $x\in A^+_i$ and $y\in A^+_j$ with $i+j\leq k+1$.
\end{dfn} 
There is a natural indexed algebra subspace homomorphism $\iota_Q:A^+_k\to U^{[k]}(A)$.
\begin{dfn}
	Let $A$ be a filtered Hopf algebra $A=\cup_{k\in\bN}A_k$  satisfying the above conditions. The {\em indexed algebra subspace dual} of $A_k^+$ is the set of all indexed algebra subspace homomorphisms from $A_k^+$ to $\bK$. We shall denote it by $(A_k^+)^{\overline{*}}$.
\end{dfn}
It is straightforward to prove the following universal property:
\begin{prop}\label{univ}
	Let $A$ be a filtered Hopf algebra $A=\cup_{k\in\bN}A_k$  satisfying the above conditions, and $B$ an associative $\bK$-algebra. Let $\phi:A^+_k\to B$ be an indexed algebra subspace homomorphism. Then there exists a unique algebra homomorphism $\overline{\phi}:U^{[k]}(A)\to B$ such that $\overline{\phi}\circ\iota_Q=\phi$.
\end{prop}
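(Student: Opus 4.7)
The plan is the standard one for any quotient-of-tensor-algebra universal property. First, I would invoke the universal property of the tensor algebra $T(A_k^+)$: since $\phi : A_k^+ \to B$ is, in particular, a $\bK$-linear map from a vector space to an associative algebra, there is a unique algebra homomorphism $\widetilde{\phi} : T(A_k^+) \to B$ whose restriction to $A_k^+$ equals $\phi$.

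Next, I would show that $\widetilde{\phi}$ factors through the ideal $Q_k$, i.e.\ that it vanishes on the two families of generators. For a generator of type (i), pick $x \in A_i^+$ and $y \in A_j^+$ with $i + j < k+1$; then $xy \in A_{i+j}^+ \subseteq A_k^+$ (the counit vanishes because $\epsilon_A(xy) = \epsilon_A(x)\epsilon_A(y) = 0$), and since $\phi$ is an indexed algebra subspace homomorphism,
\[
\widetilde{\phi}(x\otimes y - xy) = \phi(x)\phi(y) - \phi(xy) = 0.
\]
For a generator of type (ii), pick $x \in A_i^+$ and $y \in A_j^+$ with $i+j \leq k+1$; then $[x,y] \in A_{i+j-1}^+ \subseteq A_k^+$, and again by the indexed algebra subspace hypothesis,
\[
\widetilde{\phi}(x\otimes y - y\otimes x - [x,y]) = \phi(x)\phi(y) - \phi(y)\phi(x) - \phi([x,y]) = 0.
\]
Hence $\widetilde{\phi}$ descends to a well-defined algebra homomorphism $\overline{\phi} : U^{[k]}(A) \to B$ with $\overline{\phi}\circ \iota_Q = \phi$.

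For uniqueness, note that the image of $\iota_Q$ generates $U^{[k]}(A)$ as an algebra, since $U^{[k]}(A)$ is a quotient of $T(A_k^+)$; so any algebra homomorphism out of $U^{[k]}(A)$ is determined by its values on $\iota_Q(A_k^+)$. There is no real obstacle here: the only thing to be careful about is that the generators of $Q_k$ use products and brackets computed inside $A$, and one must check these land in $A_k^+$ so that $\widetilde{\phi}$ is even defined on them — but this is exactly what the filtration degree restrictions $i+j < k+1$ and $i+j \leq k+1$ (together with $[A_i,A_j]\subset A_{i+j-1}$ from commutativity of the associated graded) were designed to ensure.
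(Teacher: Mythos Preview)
Your argument is correct and is exactly the routine verification the paper has in mind: the paper simply states that the universal property is ``straightforward to prove'' and omits the details, and the standard tensor-algebra-then-quotient argument you give is precisely what is intended.
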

Let $\widehat{U}^{[k]}(A)$ be the algebra constructed in the same way as $U^{[k]}(A)$ except using $A_i$ instead of $A_i^{+}$  for $i\in\bN$ whenever relevant. This has a similar universal property, and using the universal properties for the linear maps $A_k^{+}\hookrightarrow A_k$ and $A_k\to \bK\oplus A_k^{+}$ it can be shown that the algebras $\widehat{U}^{[k]}(A)$ and $U^{[k]}(A)$ are isomorphic. We shall abuse notation to refer to both algebras as $U^{[k]}(A)$. [A similar argument can be made regarding the algebra $\bU^{(m)}$ defined in the Subsection~\ref{sec2.2}].
\begin{cor}\label{Hopf}
	Let $A$ be a filtered Hopf algebra $A=\cup_{k\in\bN}A_k$ satisfying the above conditions. Then $U^{[k]}(A)$ is a Hopf algebra for all $k\geq 0$. Furthermore, if $A$ is cocommutative then $U^{[k]}(A)$ is cocommutative.
\end{cor}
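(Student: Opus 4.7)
The plan is to apply the universal property of Proposition~\ref{univ} three times to construct the comultiplication, counit, and antipode on $U^{[k]}(A)$, and then to verify the Hopf algebra axioms by checking them on the generating subset $\iota_Q(A_k^+)$ and invoking the uniqueness clause of the universal property.

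For the comultiplication, I would consider the composition
\[
A_k^+ \xrightarrow{\Delta_A} A_k \otimes A_k \xrightarrow{\iota \otimes \iota} U^{[k]}(A) \otimes U^{[k]}(A),
\]
where $\iota$ denotes the extension of $\iota_Q$ from $A_k^+$ to $A_k$ via the isomorphism $\widehat{U}^{[k]}(A) \cong U^{[k]}(A)$, and the first arrow is well-defined because the filtration is compatible with the coproduct, i.e. $\Delta_A(A_k) \subset \sum_{i+j \leq k} A_i \otimes A_j \subset A_k \otimes A_k$. I would then verify that this composite is an indexed algebra subspace homomorphism: condition (i) follows by expanding $(\iota \otimes \iota) \circ \Delta_A(xy)$ for $x \in A_i^+$, $y \in A_j^+$ with $i+j < k+1$, using that $\Delta_A$ is an algebra homomorphism together with the fact that every tensor factor produced has total degree at most $i+j < k+1$, so that relation (i) of $U^{[k]}(A)$ applies componentwise; condition (ii) is analogous, starting from $\Delta_A([x,y]) = [\Delta_A(x), \Delta_A(y)]$ and applying relations (i) and (ii) of $U^{[k]}(A)$ to each tensor factor. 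Proposition~\ref{univ} then produces a unique algebra homomorphism $\Delta: U^{[k]}(A) \to U^{[k]}(A) \otimes U^{[k]}(A)$.

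For the counit, $\epsilon_A|_{A_k^+} \equiv 0$ is trivially an indexed algebra subspace homomorphism to $\bK$, yielding $\epsilon: U^{[k]}(A) \to \bK$. For the antipode, the connectedness hypothesis $A_0 = \bK$ permits an inductive argument from the antipode axiom to show that $S_A$ preserves the filtration, so the composite $A_k^+ \xrightarrow{S_A} A_k \xrightarrow{\iota} U^{[k]}(A)^{\mathrm{op}}$ is defined, and is an indexed algebra subspace homomorphism because $S_A$ is an algebra anti-homomorphism and the relations defining $U^{[k]}(A)$ behave symmetrically under reversal of multiplication. Each Hopf algebra axiom (coassociativity, the counit identities, and the antipode identities) is then an equality between two algebra homomorphisms out of $U^{[k]}(A)$; precomposition with $\iota_Q$ reduces each to the corresponding identity inside $A$ after applying $\iota$, so the uniqueness clause of Proposition~\ref{univ} forces the axiom to hold on all of $U^{[k]}(A)$. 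Cocommutativity is obtained the same way: writing $\tau$ for the flip on $U^{[k]}(A) \otimes U^{[k]}(A)$, if $\tau \circ \Delta_A = \Delta_A$ then $\tau \circ \Delta$ and $\Delta$ agree after precomposition with $\iota_Q$, and so agree on all of $U^{[k]}(A)$.

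The main obstacle will be the careful componentwise verification of condition (ii) for $\Delta$, where the tight bound $i+j \leq k+1$ permits individual tensor factors in the expansion of $[\Delta_A(x), \Delta_A(y)]$ to have degree exactly $k+1$; these are the cases covered by relation (ii) rather than (i), and one must track the degrees of both tensor slots simultaneously to confirm that the commutators arising in $A \otimes A$ translate correctly to commutators in $U^{[k]}(A) \otimes U^{[k]}(A)$.
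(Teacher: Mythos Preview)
Your proposal is correct and follows essentially the same approach as the paper: apply Proposition~\ref{univ} to the comultiplication, counit, and antipode of $A$ (using that each preserves $A_k$), and then verify the Hopf algebra axioms and cocommutativity via the uniqueness clause. The paper's proof is much terser, simply asserting that the antipode preserves $A_k$ and that the axiom checks are ``straightforward,'' so your additional detail---particularly the inductive argument for filtration-preservation of $S_A$ and the degree-tracking for condition (ii)---fills in exactly what the paper leaves implicit.
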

\begin{proof}
	We already know that $U^{[k]}(A)$ is an associative algebra. Applying Proposition~\ref{univ} to the comultiplication and counit maps on the coalgebra $A_k$ constructs the comultiplication and counit maps on $U^{[k]}(A)$. Furthermore, the antipode on $A$ sends $A_k$ to $A_k$ and so we get the antipode on $U^{[k]}(A)$ from Proposition~\ref{univ}. It is straightforward to check that the Hopf algebra axioms hold, and similarly straightforward to show cocommutativity when $A$ is cocommutative.
\end{proof}
\begin{dfn}
	Let $A$ be a filtered Hopf algebra $A=\cup_{k\in\bN}A_k$  satisfying the above conditions. An {\em indexed algebra subspace representation} of $A_k^{+}$ is an indexed algebra subspace homomorphism $\phi:A^+_k\to \End(M)$ where $M$ is a $\bK$-vector space.
\end{dfn}
\begin{dfn}
	Let $A$ be a filtered Hopf algebra $A=\cup_{k\in\bN}A_k$  satisfying the above conditions. A $\bK$-vector space $M$ is called an {\em indexed $A^+_k$-module} if there exists an indexed algebra subspace homomorphism $\theta:A_k^+\to\End(M)$. For $a\in A_k^+$ and $m\in M$ we shall often write $a\cdot m$ or just $am$ for the element $\theta(a)(m)$.
\end{dfn}
\begin{dfn}
	Let $A$ be a filtered Hopf algebra $A=\cup_{k\in\bN}A_k$  satisfying the above conditions, and let $(M_1,\theta_1),(M_2,\theta_2)$ be indexed $A_k^+$-modules. A {\em homomorphism of indexed $A_k^+$-modules} is a linear map $\phi:M_1\to M_2$ such that $\phi(am)=a\phi(m)$ for all $a\in A^+_k$ and $m\in M$.
\end{dfn}
We can use the universal property in a standard way to get the following theorem
\begin{prop}\label{bij}
	There is a bijection between the set of (isomorphism classes of) indexed $A^+_k$-modules and the set of (isomorphism classes of) $U^{[k]}(A)$-modules.
\end{prop}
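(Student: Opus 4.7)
The plan is to derive the bijection as a direct application of the universal property established in Proposition~\ref{univ}, together with the observation that $U^{[k]}(A)$ is generated as an algebra by $\iota_Q(A_k^+)$ (and by the constants, in the $\widehat{U}^{[k]}(A)$ formulation).

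First I would describe the two maps in opposite directions. Given an indexed $A_k^+$-module $(M,\theta)$ with $\theta:A_k^+\to\End(M)$ an indexed algebra subspace homomorphism, Proposition~\ref{univ} applied to $B=\End(M)$ yields a unique algebra homomorphism $\overline{\theta}:U^{[k]}(A)\to\End(M)$ extending $\theta$; this makes $M$ into a $U^{[k]}(A)$-module. Conversely, given a $U^{[k]}(A)$-module $(M,\rho)$ with structure map $\rho:U^{[k]}(A)\to\End(M)$, the composite $\rho\circ\iota_Q:A_k^+\to\End(M)$ is an indexed algebra subspace homomorphism (indexed algebra subspace homomorphisms are stable under post-composition with algebra maps), so $M$ becomes an indexed $A_k^+$-module.

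Next I would verify these constructions are mutually inverse. Starting from $\theta$ and applying the two steps gives $\overline{\theta}\circ\iota_Q=\theta$ by construction, so we recover the original indexed module structure. Starting from $\rho$, the two-step procedure produces the unique algebra homomorphism $U^{[k]}(A)\to\End(M)$ extending $\rho\circ\iota_Q$; but $\rho$ itself is such an extension, so by the uniqueness clause in Proposition~\ref{univ} the resulting map equals $\rho$.

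Finally I would promote the bijection on objects to a bijection on isomorphism classes by checking that the two notions of morphism coincide. A $\bK$-linear map $\phi:M_1\to M_2$ satisfies $\phi(u\cdot m)=u\cdot\phi(m)$ for all $u\in U^{[k]}(A)$ if and only if it does so for all $u$ in a generating set; since the image $\iota_Q(A_k^+)$ generates $U^{[k]}(A)$ as an algebra, this condition is equivalent to $\phi(a\cdot m)=a\cdot\phi(m)$ for all $a\in A_k^+$, which is exactly the definition of a homomorphism of indexed $A_k^+$-modules. Hence isomorphisms correspond, and the bijection descends to isomorphism classes. The only point requiring care — and the nearest thing to a genuine obstacle — is this last generation statement, but it is immediate from the definition of $U^{[k]}(A)$ as a quotient of $T(A_k^+)$.
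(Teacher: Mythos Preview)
Your proposal is correct and is precisely the standard argument the paper alludes to: the paper itself does not give a detailed proof, merely stating that one uses the universal property in the standard way. Your write-up fills in exactly those details.
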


\subsection{Higher Universal Enveloping Algebras}
\label{sec5}
Observe that, for an algebraic group $G$, the distribution algebra $\Di(G)$ is a filtered Hopf algebra $\Di(G)=\cup_{k\in\bN}\Di_k(G)$ with $\Di_0(G)=\bK$, such that the associated graded algebra $\gr(\Di(G))=\bigoplus_{k\in\bN}\Di_{k+1}(G)/{\Di_{k}(G)}$ is commutative. Furthermore, $\Di^+_k(G)$ is the same object as $\Di_k(G)^+$ and $\Di^+(G)$ is an ideal in $\Di(G)$.

We can now use the results of Section~\ref{sec3.1} to obtain analogues of the universal enveloping algebras. In particular, we define a higher universal enveloping algebra of $G$ of degree $r$ to be the algebra
$$U^{[r]}(G)\coloneqq U^{[p^{r+1}-1]}(\Di(G)).$$

The key observation which allows Parshall and Friedlander to develop and study their deformation algebras is that the $p$-th power map gives rise to a semilinear map $\xi:\fg\to Z(U(\fg))$ (i.e. for all $\alpha,\beta\in\bK$ and $x,y\in\fg$, $\xi(\alpha x + \beta y)=\alpha^p\xi(x)+\beta^p\xi(y)$). In order to make progress with the study of the structure of $U^{[r]}(G)$ we need to construct an analogue of the map $\xi$. We start with the following lemma. Note that when $\delta\in \Di_{k}^{+}(G)$ we already know that $\delta^p\in\Di_{pk}^{+}(G)$.
\begin{lemma}\label{powers}
	If $\delta\in\Di_{k}^{+}(G)$, then $\delta^p\in\Di_{pk-1}^{+}(G)$.
\end{lemma}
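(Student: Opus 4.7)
The overall strategy is to pass to the associated graded algebra and exploit the Frobenius endomorphism available in positive characteristic. From the paper we already know $\delta^p\in\Di^{+}_{pk}(G)$: this follows from the standard filtration estimate $\Di^{+}_{pk}(G)\supseteq\Di^{+}_k(G)\cdots\Di^{+}_k(G)$ together with the fact that $\Di^{+}(G)$ is an ideal of $\Di(G)$. So the content of the lemma is that $\delta^p$ in fact drops one further filtration degree.

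Consider $\gr\Di(G)=\bigoplus_{k}\Di_k(G)/\Di_{k-1}(G)$, which is commutative by the observation at the start of Subsection~\ref{sec5} (applied to $A=\Di(G)$). If $\delta$ already lies in $\Di_{k-1}^{+}(G)$, then $\delta^p\in\Di_{p(k-1)}(G)\subset\Di_{pk-1}(G)$ and there is nothing to do, so we may assume $\delta$ has a nonzero symbol $\bar\delta\in\gr_k\Di(G)$. Because $\gr\Di(G)$ is a commutative $\bK$-algebra of characteristic $p$, the $p$-th power map on it is a ring homomorphism (the Frobenius), and so the symbol of $\delta^p$ in $\gr_{pk}\Di(G)=\Di_{pk}(G)/\Di_{pk-1}(G)$ is precisely $\bar\delta^{\,p}$. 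The lemma therefore reduces to showing that $\bar\delta^{\,p}=0$ in $\gr\Di(G)$ for every positive-degree element $\bar\delta$.

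For this I would use the standard identification of $\gr\Di(G)$ with the divided power algebra $\Gamma(\fg)$ on $\fg=\Lie(G)$. A basis of $\gr_k\Di(G)$ is then given by divided power monomials $\prod_i e_i^{(\alpha_i)}$ with $|\alpha|=k$ for $e_1,\dots,e_n$ a basis of $\fg$. In the divided power algebra one has $\bigl(e_i^{(a)}\bigr)^p=\frac{(pa)!}{(a!)^p}\,e_i^{(pa)}$, and Legendre's formula gives $v_p\!\bigl(\tfrac{(pa)!}{(a!)^p}\bigr)=s_p(a)$, the sum of the base-$p$ digits of $a$, which is at least $1$ whenever $a\geq 1$. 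So each such coefficient vanishes in $\bK$, and since the Frobenius is additive as well as multiplicative on a commutative $\bK$-algebra of characteristic $p$, it annihilates every basis monomial of positive degree. Hence $\bar\delta^{\,p}=0$. Finally, $\delta^p\in\Di^{+}(G)$ because $\Di^{+}(G)$ is an ideal (equivalently $\epsilon(\delta^p)=\epsilon(\delta)^p=0$), which gives the decoration on $\Di^{+}_{pk-1}(G)$.

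The one substantive input is the identification of $\gr\Di(G)$ with a divided power algebra on $\fg$, and this is the step I expect to require the most care, since this is where any smoothness hypothesis on $G$ enters. An alternative that avoids citing this identification outright is to fix a PBW-type basis of $\Di(G)$ coming from a basis of $\fg$, expand $\delta$ explicitly in this basis, and compute the leading degree-$pk$ component of $\delta^p$ directly using the divided power multiplication law; the same $p$-adic valuation estimate then forces the leading coefficient to vanish and places $\delta^p$ in $\Di_{pk-1}^{+}(G)$.
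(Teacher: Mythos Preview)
Your argument is correct and takes a genuinely different route from the paper. The paper proves the lemma by a direct computation inside $\bK[G]$: it expands $\delta^p(f_1\cdots f_{pk})$ via the iterated comultiplication $\Delta_{p-1}$, observes that the surviving terms are indexed by ordered partitions of $\{1,\dots,pk\}$ into $p$ nonempty blocks, and then notes that each unordered partition contributes $p!$ equal summands, which vanish in characteristic $p$. No structural identification of $\gr\Di(G)$ is used.

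Your approach is more conceptual: you pass to the associated graded, identify it with the divided power algebra $\Gamma(\fg)$, and kill $\bar\delta^{\,p}$ there via the vanishing of multinomial coefficients $\tfrac{(pa)!}{(a!)^p}$ modulo $p$. The underlying combinatorics is morally the same (a $p$-th power producing a coefficient divisible by $p$), but you package it through the Frobenius on a commutative $\bK$-algebra. The trade-off is exactly the one you flag: the paper's proof is entirely self-contained and needs nothing about $G$ beyond the Hopf algebra axioms for $\bK[G]$, while yours imports the identification $\gr\Di(G)\cong\Gamma(\fg)$, which is where smoothness enters. Since the lemma in the paper is stated before the reductive hypothesis is imposed, the paper's argument is slightly more robust in that respect; on the other hand, your argument makes the mechanism transparent and would generalise immediately to any filtered cocommutative Hopf algebra whose associated graded is a divided power algebra on its primitives.
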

\begin{proof}
	Recall that $\bK[G]=\bK\oplus I_1$. Hence, for $m\in\bN$, $\bK[G]^{\otimes m}=\sum_{P_{i}\in\{\bK,I_1\}}P_1\otimes P_2\otimes\ldots\otimes P_m$. Using this and the counitary property of the Hopf algebra structure of $\bK[G]$, we have for $f\in I_1$,
	$$\Delta_{m-1}(f)\in f\otimes 1\otimes\ldots\otimes 1 + 1\otimes f \otimes\ldots\otimes 1 + \dots +
	1\otimes 1\otimes\ldots\otimes f + \sum _{\substack{a_i\in\{0,1\}\\2\leq\sum a_i\leq m}}I_1^{a_1}\otimes\ldots\otimes I_1^{a_m},$$
	where $\Delta_{m-1}$ is defined inductively by setting $\Delta_1$ as the comultiplication of $\bK[G]$ and $\Delta_l\coloneqq(\Delta_{l-1}\otimes \Id)\circ\Delta$. One can hence show by induction that for $f_1,\ldots,f_n\in I_1$, with $n\in\bN$, we have 
	$$\Delta_{m-1}(f_1\ldots f_n)\in\prod_{i=1}^n( f_i\otimes 1\otimes\ldots\otimes 1 + 1\otimes f_i \otimes\ldots\otimes 1 + \dots +
	1\otimes 1\otimes\ldots\otimes f_i) + \sum _{\substack{0\leq a_i\leq n\\n+1\leq\sum a_i\leq mn}}I_1^{a_1}\otimes\ldots\otimes I_1^{a_m}.$$
	Rewriting this slightly, we get
	\begin{multline*}
		\Delta_{m-1}(f_1\ldots f_n)\in\prod_{i=1}^n( f_i\otimes 1\otimes\ldots\otimes 1 + 1\otimes f_i \otimes\ldots\otimes 1 + \dots +
		1\otimes 1\otimes\ldots\otimes f_i) \\ + \sum_{j=1}^{m}\sum _{\substack{0\leq a_i\leq n\\n+1\leq\sum a_i\leq mn\\ a_j=0}}I_1^{a_1}\otimes\ldots\otimes I_1^{a_m} + \sum _{\substack{1\leq a_i\leq n\\\sum a_i=n+1}}I_1^{a_1}\otimes\ldots\otimes I_1^{a_m}.
	\end{multline*}	
	We now fix $m=p$ and $n=pk$. Given $\delta\in\Di_k^{+}(G)$ (so $\delta(I_1^{k+1})=0$ and $\delta(1)=0$) and $f_1,\ldots,f_{pk}\in I_1$ we have that
	\begin{multline*}
		\delta^p(f_1\ldots f_{pk})=(\delta\otimes\delta\otimes\ldots\otimes\delta)(\Delta_{p-1}(f_1\ldots f_{pk}))\in \\ (\delta\otimes\delta\otimes\ldots\otimes\delta)(\prod_{i=1}^{pk}( f_i\otimes 1\otimes\ldots\otimes 1 + 1\otimes f_i \otimes\ldots\otimes 1 + \dots +
		1\otimes 1\otimes\ldots\otimes f_i))\\
		+\sum_{j=1}^{p}\sum _{\substack{0\leq a_i\leq pk\\pk+1\leq\sum a_i\leq p^2k\\ a_j=0}}\delta(I_1^{a_1})\ldots\delta(I_1^{a_m})+\sum _{\substack{1\leq a_i\leq pk\\pk+1=\sum a_i}}\delta(I_1^{a_1})\ldots \delta(I_1^{a_p}).
	\end{multline*}
	Since $\delta(1)=0$, we get $$\sum_{j=1}^{p}\sum _{\substack{0\leq a_i\leq pk\\pk+1\leq\sum a_i\leq p^2k\\ a_j=0}}\delta(I_1^{a_1})\ldots\delta(I_1^{a_m})=0.$$
	Since $a_1+\ldots +a_p=pk+1$ implies $a_i\geq k+1$ for some $i$, and $\delta(I_1^{k+1})=0$, we also have 
	$$\sum _{\substack{1\leq a_i\leq pk\\pk+1=\sum a_i}}\delta(I_1^{a_1})\ldots \delta(I_1^{a_p})=0.$$
	Now, we want to compute $(\delta\otimes\delta\otimes\ldots\otimes\delta)(\prod_{i=1}^{pk}( f_i\otimes 1\otimes\ldots\otimes 1 + 1\otimes f_i \otimes\ldots\otimes 1 + \dots +
	1\otimes 1\otimes\ldots\otimes f_i))$.
	
	Observe that $$\prod_{i=1}^{pk}( f_i\otimes 1\otimes\ldots\otimes 1 + 1\otimes f_i \otimes\ldots\otimes 1 + \dots +
	1\otimes 1\otimes\ldots\otimes f_i)=\sum f_{A_1}\otimes\ldots\otimes f_{A_p},$$
	where the sum is over all ordered partitions $A_1,\ldots,A_p$ of the set $\{1,\ldots,pk\}$ where the sets can be empty (ordered partition meaning for example that $\{1,2\},\{3,4\}$ is different from $\{3,4\},\{1,2\}$), and where, if $A_i=\{j_1,\ldots,j_s\}$ with $j_1<\ldots<j_s$, we denote $f_{A_i}=f_{j_1}f_{j_2}\ldots f_{j_s}$. Then 
	$$
	(\delta\otimes\delta\otimes\ldots\otimes\delta)(\prod_{i=1}^{pk}( f_i\otimes 1\otimes\ldots\otimes 1 + 1\otimes f_i \otimes\ldots\otimes 1 + \dots +
	1\otimes 1\otimes\ldots\otimes f_i))
	=\sum \delta(f_{A_1})\ldots\delta(f_{A_p})
	$$	
	where the sum is over the same set as before.
	
	For ordered partitions containing empty sets, $\delta(f_{A_i})=\delta(1)=0$ for those $i$ with $A_i=\emptyset$. Furthermore, if two ordered partitions containing no empty sets are rearrangements of each other, they give the same summand in the above sum since $\bK$ is a field. In particular, there are $p!$ such partitions which give the same summand, so this summand appears $p!$ times. Hence 
	$$
	(\delta\otimes\delta\otimes\ldots\otimes\delta)(\prod_{i=1}^{pk}( f_i\otimes 1\otimes\ldots\otimes 1 + 1\otimes f_i \otimes\ldots\otimes 1 + \dots +
	1\otimes 1\otimes\ldots\otimes f_i))
	=\sum p!\delta(f_{A_1})\ldots\delta(f_{A_p})=0
	$$
	where this time the second sum is over {\em unordered partitions} with $p$ non-empty sets in them. 
	
	Hence, we have that $\delta^p(f_1\ldots f_{pk})=0$. That is to say, $\delta^p\in\Di_{pk-1}^{+}(G)$.
\end{proof}

In particular, if $\delta\in\Dipri$ then $\delta^p\in\Diprj$. This allows us to define a map $\xi_r:\Di_{p^r}^+(G)\rightarrow U^{[r]}(G)$ as $\xi_r(\delta)=\delta^{\otimes p}-\delta^p$ where the first exponent is in $U^{[r]}(G)$ and the second is in $\Di(G)$.

\begin{lemma}\label{semi}
	$\xi_r$ is semilinear.
\end{lemma}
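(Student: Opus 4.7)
The plan is to invoke Jacobson's formula for $p$-th powers in characteristic $p$, applied in both associative algebras $\Di(G)$ and $U^{[r]}(G)$, and then verify that the nonlinear correction terms cancel after taking the difference $\xi_r(\delta)=\delta^{\otimes p}-\delta^p$.

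Fix $a,b\in\Di_{p^r}^+(G)$ and $\alpha,\beta\in\bK$. Since $\bK$ has characteristic $p$ and both $\Di(G)$ and $U^{[r]}(G)$ are associative $\bK$-algebras, Jacobson's formula gives
\begin{align*}
(\alpha a+\beta b)^{\otimes p} &= \alpha^p a^{\otimes p}+\beta^p b^{\otimes p}+\sum_{i=1}^{p-1}s_i^U(\alpha a,\beta b),\\
(\alpha a+\beta b)^{p} &= \alpha^p a^{p}+\beta^p b^{p}+\sum_{i=1}^{p-1}s_i^D(\alpha a,\beta b),
\end{align*}
where $s_i^U$ and $s_i^D$ denote the same universal Lie polynomials of degree $p$ in two non-commuting variables, evaluated respectively with the commutator in $U^{[r]}(G)$ and in $\Di(G)$. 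Subtracting, one obtains
\[
\xi_r(\alpha a+\beta b)=\alpha^p\xi_r(a)+\beta^p\xi_r(b)+\sum_{i=1}^{p-1}\bigl(s_i^U(\alpha a,\beta b)-\iota_Q(s_i^D(\alpha a,\beta b))\bigr),
\]
so the result reduces to proving $s_i^U(\alpha a,\beta b)=\iota_Q(s_i^D(\alpha a,\beta b))$ in $U^{[r]}(G)$ for each $i$.

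Each $s_i(\alpha a,\beta b)$ is a sum of nested commutators of $p$ arguments, each drawn from $\{\alpha a,\beta b\}\subset\Di_{p^r}^+(G)$. I would prove $\iota_Q(s_i^D)=s_i^U$ by induction on the depth of the nesting, using the relation $[\Di_i^+(G),\Di_j^+(G)]\subset\Di_{i+j-1}^+(G)$ to track filtrations: after $k$ nested commutators, the intermediate expression lies in $\Di_{(k+1)p^r-k}^+(G)$. Combining this with the next argument in $\Di_{p^r}^+(G)$ produces a degree sum of $(k+2)p^r-k$, and one verifies the arithmetic inequality
\[
(k+2)p^r-k\;\leq\;p^{r+1}\qquad\text{for all }0\leq k\leq p-2,
\]
which holds since $(k+2-p)p^r\leq 0\leq k$ on this range. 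Because this sum is $\leq p^{r+1}=k+1$ (with $k=p^{r+1}-1$ in the notation of Section~\ref{sec3.1}), relation (ii) defining $U^{[r]}(G)$ ensures that the commutator bracket in $U^{[r]}(G)$ of the images of these two elements agrees (through $\iota_Q$) with their bracket in $\Di(G)$. Iterating, $s_i^U$ and $\iota_Q(s_i^D)$ coincide, which completes the argument.

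The main obstacle is the bookkeeping for this induction: one has to verify that \emph{every} intermediate commutator arising in the Jacobson correction terms keeps the filtration degrees within the range where relation (ii) is still applicable. It is slightly delicate because relation (i) of $U^{[r]}(G)$ requires strict inequality $i+j<p^{r+1}$ and fails for $p=2$ on products of two elements of $\Di_{p^r}^+(G)$, so the argument must rely only on relation (ii); fortunately Jacobson's formula uses nothing but the Lie bracket, and the inequality above confirms that (ii) suffices for every nesting depth occurring in the formula.
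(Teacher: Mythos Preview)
Your proof is correct and follows essentially the same approach as the paper's: both arguments reduce additivity to showing that the cross terms in the expansion of the $p$-th power---which are Lie polynomials in $\mu,\rho$---are computed identically in $\Di(G)$ and in $U^{[r]}(G)$ because repeated commutation drops the filtration degree enough for relation~(ii) to apply throughout. You package this via Jacobson's formula with explicit degree tracking $(k+2)p^r-k\leq p^{r+1}$, whereas the paper rearranges monomials directly and records that the correction term~$\Psi$ has total grade $<p^{r+1}$; the content is the same.
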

\begin{proof}
	Clearly $\xi_r(\lambda\delta)=\lambda^p\xi_r(\delta)$ if $\lambda\in\bK$ and $\delta\in\Dipri$. We now want to show $\xi_r(\mu+\rho)=\xi_r(\mu)+\xi_r(\rho)$ for $\mu,\rho\in \Di^{+}_{p^{r}}(G)$. Observe that, by definition,	
	$$\xi_r(\mu+\rho)=(\mu+\rho)^{\otimes p}-(\mu+\rho)^p.$$
	We have that
	$$(\mu+\rho)^{\otimes p}=\sum_{a_i\in\{0,1\}}\eta_{a_1}\otimes\ldots\otimes\eta_{a_p},$$ where $\eta_0=\mu$ and $\eta_1=\rho$. Applying $\mu\otimes\rho-\rho\otimes\mu=[\mu,\rho]\in \Di^{+}_{2p^{r}-1}(G)$, we get 
	$$(\mu+\rho)^{\otimes p}=\sum_{i=0}^p\binom{p}{i}\mu^{\otimes i}\otimes\rho^{\otimes (p-i)} - \Psi$$ where $\Psi$ is a sum of terms in $\Upr$, each of which is the tensor product of elements of $\Di(G)$ where the sum of the grades is less than $p^{r+1}$. Hence, $\Psi$ is obtained from the product of these elements in $\Di(G)$, by the definition of $\Upr$. Since $\Char \bK= p$, we get 
	$$(\mu+\rho)^{\otimes p}=\mu^{\otimes p} + \rho^{\otimes p} - \Psi.$$
	Similarly, 
	$$(\mu+\rho)^{p}=\sum_{a_i\in\{0,1\}}\eta_{a_1}\ldots\eta_{a_p},$$ where $\eta_0=\mu$ and $\eta_1=\rho$. Applying $\mu\rho-\rho\mu=[\mu,\rho]\in \Di_{2p^{r}-1}(G)$, we get $$(\mu+\rho)^{p}=\sum_{i=0}^p\binom{p}{i}\mu^{i}\rho^{p-i} - \Psi$$ where $\Psi$ is exactly the same $\Psi$ as above since the multiplication in the expression of $\Psi$ is the same in $\Di(G)$ and $U^{[r]}(G)$.
	So $$(\mu+\rho)^{p}=\mu^{p} + \rho^{p} - \Psi.$$
	Hence $\xi_r(\mu+\rho)=\xi_r(\mu)+\xi_r(\rho)$
\end{proof}

For $k\leq r$, define $X_{p^k}$ to be the $\bK$-span of $\{\mu\in\Di^{+}_{p^k}(G)\,\vert\,\mu=\rho_1\rho_2\,\mbox{for}\,\rho_i\in\Di_{j_i}(G)\,\mbox{with}\,j_1+j_2\leq p^k \,\mbox{and}\,j_1,j_2<p^k\}\subset \Upr$. Define $Y_{p^k}$ to be a vector space complement of this subspace in $\Di^{+}_{p^k}(G)$; when $G$ is reductive, we take it to be the one with basis $\{\ve_{\alpha}^{(p^k)},\binom{\vh_t}{p^k}\,\vert\,\alpha\in\Phi,1\leq t\leq d\}$ (see Subsection~\ref{sec3.2} for the notation). The next proposition shows that $\xi_r$ is only non-trivial on the subspace $Y_{p^r}$. 

\begin{prop}\label{zero}
	For all $0\leq k\leq r$, $\xi_r(X_{p^k})=0$.
\end{prop}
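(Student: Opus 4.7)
The plan is to prove the vanishing on a typical generator $\mu=\rho_1\rho_2$ of $X_{p^k}$ (with $\rho_i\in\Di_{j_i}(G)$, $j_1+j_2\leq p^k$, $j_i<p^k$) and then extend to the whole span using the semilinearity of $\xi_r$ from Lemma~\ref{semi}. The key idea is to carry out the computation of $\mu^{\otimes p}$ in $\Upr$ so that it mirrors, step for step, the straightforward computation of $\mu^p=(\rho_1\rho_2)^p$ inside $\Di(G)$.

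First I would observe that, since $j_1+j_2\leq p^k\leq p^r<p^{r+1}$, relation~(i) of Subsection~\ref{sec3.1} identifies $\mu$ with $\rho_1\otimes\rho_2$ in $\Upr$, so that $\mu^{\otimes p}$ unfolds into a $2p$-fold tensor with factors alternating between $\rho_1$ and $\rho_2$. I would then use relation~(ii) — valid here because $j_1+j_2\leq p^{r+1}$ — to permute the factors so that all $\rho_1$'s precede all $\rho_2$'s. Each swap $\rho_2\otimes\rho_1\mapsto\rho_1\otimes\rho_2-[\rho_1,\rho_2]$ generates correction tensors in which an adjacent pair has been replaced by $[\rho_1,\rho_2]\in\Di^+_{j_1+j_2-1}(G)$, and hence drops the total grade by at least one. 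Consequently every correction tensor has total grade at most $p(j_1+j_2)-1\leq p^{k+1}-1<p^{r+1}$ and collapses under relation~(i) to a genuine product inside $\Di(G)$.

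The leftover main term is $\rho_1^{\otimes p}\otimes\rho_2^{\otimes p}$. Each $\rho_i^{\otimes p}$ has total grade $pj_i<p^{r+1}$, so it already collapses to $\rho_i^p\in\Di(G)$; crucially, Lemma~\ref{powers} then yields $\rho_i^p\in\Di^+_{pj_i-1}(G)$, so the remaining tensor $\rho_1^p\otimes\rho_2^p$ has total grade at most $p(j_1+j_2)-2<p^{r+1}$ and collapses further to $\rho_1^p\rho_2^p$. Running the identical commutator manipulations on $(\rho_1\rho_2)^p$ directly in $\Di(G)$ produces $\rho_1^p\rho_2^p$ together with precisely the same correction products, because the bracket identity $\rho_2\rho_1=\rho_1\rho_2-[\rho_1,\rho_2]$ in $\Di(G)$ is exactly what relation~(ii) encodes. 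Matching the two computations gives $\mu^{\otimes p}=\mu^p$ in $\Upr$, i.e.\ $\xi_r(\mu)=0$.

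The main obstacle I anticipate is the boundary case $k=r$ with $j_1+j_2=p^r$: then the raw $2p$-fold tensor has total grade exactly $p^{r+1}$, on the edge of where relation~(i) fails, and so cannot be collapsed in a single step. The argument circumvents this precisely by exploiting that each commutator and each $p$th power strictly lowers the filtration — the latter being the content of Lemma~\ref{powers} — so that after one non-trivial manipulation every remaining piece sits below the $p^{r+1}$ threshold and the two parallel computations can be compared term by term.
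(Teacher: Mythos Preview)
Your argument is correct and essentially identical to the paper's proof: both unfold $(\rho_1\rho_2)^{\otimes p}$ into an alternating $2p$-fold tensor, use relation~(ii) to sort the factors while tracking that every commutator correction drops below the $p^{r+1}$ threshold (hence computes the same way in $\Di(G)$ and in $\Upr$), collapse $\rho_i^{\otimes p}$ to $\rho_i^p$ via $pj_i<p^{r+1}$, and finish with Lemma~\ref{powers} to collapse $\rho_1^p\otimes\rho_2^p$. The only cosmetic differences are that the paper first reduces to $k=r$ via $X_{p^k}\subset X_{p^r}$, whereas you carry the inequality $j_1+j_2\leq p^k\leq p^r$ throughout, and you make the appeal to semilinearity (Lemma~\ref{semi}) for passing to the span explicit where the paper leaves it implicit.
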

\begin{proof}
	Since $X_{p^k}\subset X_{p^r}$ for all $0\leq k\leq r$, it is sufficient to prove that $\xi_r(X_{p^r})=0$.
	
	Suppose $\mu\in \Di_i(G)$, $\rho\in \Di_j(G)$, where $i+j\leq p^r$ and $i,j>0$. So $\mu\rho\in \Di_{p^r}(G)$. Consider $\xi_r(\mu\rho)=(\mu\rho)^{\otimes p}-(\mu\rho)^p$. Note that $\mu\rho-\mu\otimes\rho=0$ as $i+j\leq p^{r}<p^{r+1}$. We have $$(\mu\rho)^{\otimes p}=\mu\otimes(\rho\otimes\mu)\otimes\ldots\otimes(\rho\otimes\mu)\otimes\rho.$$
	Furthermore $\rho\otimes\mu-\mu\otimes\rho=[\rho,\mu]\in \Di_{p^{r}-1}(G)$. Hence 	
	$$(\mu\rho)^{\otimes p}=\mu^{\otimes p}\otimes\rho^{\otimes p}-\Phi,$$ where $\Phi$ is a sum of terms in $\Upr$, each of which is the tensor product of elements of $\Di(G)$ where the sum of the grades is less that $p^{r+1}$. Hence, $\Phi$ is obtained from the product of these elements in $\Di(G)$.
	Similarly, we have $$(\mu\rho)^p=\mu(\rho\mu)\ldots(\rho\mu)\rho.$$
	Since $\rho\mu-\mu\rho=[\rho,\mu]$ by definition, we get that
	$$(\mu\rho)^p=\mu^p\rho^p -\Phi,$$ where $\Phi$ is exactly the same as above, since it doesn't matter when calculating $\Phi$ if the multiplication is done in $\Di(G)$ or in $U^{[r]}(G)$ because of the grades of the elements being multiplied.
	
	Hence, $\xi_r(\mu\rho)=(\mu\rho)^{\otimes p}-(\mu\rho)^p=\mu^{\otimes p}\otimes\rho^{\otimes p}-\mu^p\rho^p$. Since $\mu\in \Di_i(G)$ and $i<p^{r}$, we have $\mu^{\otimes p}=\mu^p$, and similarly for $\rho$. So $\xi_r(\mu\rho)=\mu^p\otimes\rho^p-\mu^p\rho^p$. Furthermore, $\mu^p\in \Di_{pi-1}(G)$ and $\rho^p\in \Di_{pj-1}(G)$, so $\mu^p\otimes\rho^p=\mu^p\rho^p$, so $\xi_r(\mu\rho)=0$.
\end{proof}

At this point, we would like to show that the image of $\xi_r$ is central in $\Upr$. However, the proofs of this result which are known to the author for universal enveloping algebras do not appear to work in this case. For example, a priori there is no reason why $\ad(\delta^p)=\ad(\delta)^p$ should hold when $r\neq 0$. Hence, to progress further we must move to the case of reductive groups where calculations can be made more explicit.

\subsection{Reductive groups}
\label{sec3.2}

From now on, unless specified otherwise, $G$ will be a reductive algebraic group over an algebraically closed field $\bK$ of characteristic $p>0$.

Let $B$ be a Borel subgroup of $G$ with maximal torus $T$ and unipotent radical $U$. Say that $T\cong(\bG_m)^d$, where $\bG_m$ is the multiplicative group, and let $X(T)=\Hom(T,\bG_m)$ be the group of characters of $T$. Let $\Phi$ be the root system of $G$ with respect to $T$. We specify a set of positive roots $\Phi^{+}$ with corresponding simple roots $\Pi=\{\alpha_1,\ldots,\alpha_n\}$ and we fix an ordering on $\Phi$.

Denote $\fg=\Lie(G)$, $\fb=\Lie(B)$, $\fh=\Lie(T)$ and $\fn^{+}=\Lie(U)$. As in \cite[II.1.11]{Jan}, $\fg$ has a basis $\{\ve_\alpha, \vh_t\,;\,\alpha\in\Phi,1\leq t\leq d\}$ (where $\ve_\alpha=X_\alpha\otimes 1$ and $\vh_t=H_t\otimes 1$ in Jantzen's notation). We set $\vh_\alpha=[\ve_{\alpha},\ve_{-\alpha}]$. Throughout this paper we shall abuse notation by using the same symbols $\ve_\alpha$ and $\vh_t$ for the corresponding elements over any base ring. One may see this abuse, for example, in the following statement: the elements $\ve_\alpha\in\fg_{\bC}$ for $\alpha\in\Phi$ form a Chevalley system in $\fg_{\bC}$, where a Chevalley system is as defined in \cite[ch. VIII, \textsection 12]{Bou}. Here, $\fg_{\bC}$ is the complex reductive Lie algebra corresponding to $\fg$ over the field $\bC$.

Let us recall the construction of the standard bases for the universal enveloping algebra $U(\fg)$ and the distribution algebra $\Di(G)$. In both cases we start by considering the complex reductive Lie algebra $\fg_{\bC}$, and we look at elements in the universal enveloping algebra $U(\fg_{\bC})$. 
Recall that $U(\fg_{\bC})$ has $\bC$-basis
$$\{\prod_{\alpha\in\Phi^+}\ve_{\alpha}^{i_\alpha}\prod_{t=1}^d\vh_{t}^{k_t}\prod_{\alpha\in\Phi^+}\ve_{-\alpha}^{j_\alpha}\quad ;\quad 0\leq i_\alpha,j_\alpha,k_t\}.$$
We then look at the following $\bZ$-forms in $U(\fg_\bC)$: 
$$U(\fg)_{\bZ}=\bZ\{\prod_{\alpha\in\Phi^+}\ve_{\alpha}^{i_\alpha}\prod_{t=1}^d\vh_{t}^{k_t}\prod_{\alpha\in\Phi^+}\ve_{-\alpha}^{j_\alpha}\quad ;\quad 0\leq i_\alpha,j_\alpha,k_t\},$$
$$\widetilde{U}(\fg)_\bZ=\bZ\{\prod_{\alpha\in\Phi^+}\ve_{\alpha}^{(i_\alpha)}\prod_{t=1}^d\binom{\vh_{t}}{k_t}\prod_{\alpha\in\Phi^+}\ve_{-\alpha}^{(j_\alpha)}\quad ;\quad 0\leq i_\alpha,j_\alpha,k_t\}$$
where $\ve_{\alpha}^{(i_\alpha)}\coloneqq\frac{\ve_{\alpha}^{i_\alpha}}{i_\alpha!}$ and $\binom{\vh_{t}}{k_t}\coloneqq\frac{\vh_t(\vh_t-1)\ldots(\vh_t-k_t+1)}{k_t!}$. We call $\ve_{\alpha}^{(i_\alpha)}$ and $\binom{\vh_{t}}{k_t}$ \emph{divided powers} of $\ve_\alpha$ and $\vh_t$.

It is easy to see that the first of these is a $\bZ$-form from the definitions of the commutators, while the fact that the second is a $\bZ$-form was proved by Kostant in \cite{Kos} in the case when $G$ is semisimple and simply-connected -- the more general result can be found in Jantzen \cite[II.1.12]{Jan}. From this, we get $U(\fg)=U(\fg)_\bZ\otimes_\bZ\bK$ and $\Di(G)=\widetilde{U}(\fg)_\bZ\otimes_\bZ\bK$. 

To get a basis for the algebra $U^{[r]}(G)$ we shall apply the same process with a $\bZ_{(p)}$-form. Recall here that $\bZ_{(p)}=\{\frac{a}{b}\in\bQ\,\vert\,\hcf(a,b)=1,\, p\nmid b\}$ is a commutative local ring.

Given an integer $M=a_0+a_1p+\ldots+a_{r}p^{r}$ where $0\leq a_0,\ldots,a_{r-1}<p$ and $a_{r}\geq 0$, we shall define $$\ve_\alpha^{\llbracket M\rrbracket}=\ve_\alpha^{a_0}(\ve_\alpha^{(p)})^{a_1}\ldots (\ve_\alpha^{(p^{r})})^{a_{r}}\in U(\fg_\bC)$$ for $\alpha\in\Phi$. Furthermore, define  $$\binom{\vh_t}{\llbracket M\rrbracket}=\binom{\vh_t}{1}^{a_0}\binom{\vh_t}{p}^{a_1}\ldots\binom{\vh_t}{p^{r}}^{a_{r}}\in U(\fg_{\bC})$$ for $1\leq t\leq d$.

\begin{prop}
	The subset 
	$$U^{\llbracket r\rrbracket}(\fg)_{\bZ_{(p)}}\coloneqq\bZ_{(p)}\{\prod_{\alpha\in\Phi^+}\ve_{\alpha}^{\llbracket i_\alpha\rrbracket}\prod_{t=1}^d\binom{\vh_{t}}{\llbracket k_t\rrbracket}\prod_{\alpha\in\Phi^+}\ve_{-\alpha}^{\llbracket j_\alpha\rrbracket }\quad ;\quad 0\leq i_\alpha,j_\alpha,k_t\}\subset U(\fg_{\bC})$$
	is a well-defined $\bZ_{(p)}$-form of $U(\fg_{\bC})$.
	
\end{prop}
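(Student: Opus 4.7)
The plan is to verify three things: (i) the listed monomials are $\bZ_{(p)}$-linearly independent in $U(\fg_\bC)$; (ii) their $\bQ$-span equals $U(\fg_\bQ)$, so that $\otimes_{\bZ_{(p)}}\bC$ recovers $U(\fg_\bC)$; and (iii) their $\bZ_{(p)}$-span is closed under multiplication. Items (i) and (ii) will follow together from a triangular change-of-basis argument relating the $\llbracket r\rrbracket$-monomials to the classical Kostant PBW basis $\{\prod\ve_\alpha^{(i_\alpha)}\prod\binom{\vh_t}{k_t}\prod\ve_{-\alpha}^{(j_\alpha)}\}$ of $U(\fg_\bC)$.

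Using $\ve_\alpha^{(n)}\ve_\alpha^{(m)}=\binom{n+m}{n}\ve_\alpha^{(n+m)}$, one computes $\ve_\alpha^{\llbracket M\rrbracket}=c_M\,\ve_\alpha^{(M)}$ with $c_M=M!/\prod_{k=1}^{r}(p^k!)^{a_k}$, a positive integer whose $p$-adic valuation equals $v_p(a_r!)$ by Legendre's formula (so $c_M$ is a unit in $\bZ_{(p)}$ precisely when $M<p^{r+1}$). Analogously, $\binom{\vh_t}{\llbracket k\rrbracket}$ is a polynomial of degree exactly $k$ in $\vh_t$, so it equals a nonzero rational multiple of $\binom{\vh_t}{k}$ plus strictly lower-order terms. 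Together these relations make the transformation from $\llbracket r\rrbracket$-monomials to the Kostant PBW basis upper-triangular with nonzero rational diagonal entries in the natural ordering, which establishes both $\bZ_{(p)}$-linear independence and $\bQ$-spanning.

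Closure under multiplication is the main step. Same-root products are handled by commutativity and carrying: $\ve_\alpha^{\llbracket i\rrbracket}\ve_\alpha^{\llbracket j\rrbracket}$ regroups as $\ve_\alpha^{a_0+b_0}(\ve_\alpha^{(p)})^{a_1+b_1}\cdots(\ve_\alpha^{(p^r)})^{a_r+b_r}$, and any digit $a_k+b_k\ge p$ with $k<r$ is absorbed via $(\ve_\alpha^{(p^k)})^p=p!\,\ve_\alpha^{(p^{k+1})}$, producing a factor of $p!\in\bZ_{(p)}$ per carry and yielding $(p!)^K\ve_\alpha^{\llbracket i+j\rrbracket}$ for some $K\ge 0$. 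Same-torus products are treated similarly, with the analogue of the carry being $\binom{\vh_t}{p^k}^p$, a polynomial of degree $p^{k+1}<p^{r+1}$ which (being integer-valued) is a $\bZ$-combination of $\binom{\vh_t}{m}$ for $m\le p^{k+1}$, each of which is in turn a $\bZ_{(p)}$-combination of $\binom{\vh_t}{\llbracket l\rrbracket}$'s by inverting the triangular relation above. Cross-factor commutations are performed via Kostant's formulas applied one divided-power factor at a time --- for instance $\ve_{-\alpha}^{(p^l)}\ve_\alpha^{(p^k)}=\sum_{s=0}^{\min(p^k,p^l)}\ve_\alpha^{(p^k-s)}\binom{\vh_\alpha-p^k-p^l+2s}{s}\ve_{-\alpha}^{(p^l-s)}$, the torus shift $\binom{\vh_t}{p^k}\ve_{\pm\alpha}^{(p^l)}=\ve_{\pm\alpha}^{(p^l)}\binom{\vh_t\pm p^l\alpha(\vh_t)}{p^k}$, and analogous Chevalley-type identities for two distinct positive roots --- with binomials in $\vh_\alpha$ first expanded as $\bZ$-combinations of $\prod_t\binom{\vh_t}{k_t}$'s using the integrality of Kostant's $\bZ$-form $\widetilde{U}(\fg)_\bZ$, and then re-expressed in the $\llbracket r\rrbracket$-basis by the same triangular inversion. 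An induction on a suitable complexity measure of the product completes the argument.

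The main obstacle is ensuring every binomial produced by Kostant's formulas has degree strictly less than $p^{r+1}$, precisely the threshold beyond which the triangular change-of-basis between $\binom{\vh_t}{m}$ and $\binom{\vh_t}{\llbracket m\rrbracket}$ loses its $\bZ_{(p)}$-unit diagonal. The key point that resolves this is that although $a_r$ in an $\llbracket r\rrbracket$-expansion is unbounded, Kostant's commutation is applied to individual divided-power factors $\ve_{\pm\alpha}^{(p^l)}$ rather than to entire $a_r$-th powers at once; consequently the summation index $s$ in the formula is bounded by $\min(p^k,p^l)\le p^r$, so all binomials produced have degree $\le p^r<p^{r+1}$, safely below the threshold.
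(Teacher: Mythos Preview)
Your proposal is correct and follows essentially the same route as the paper's proof: both reduce closure under multiplication to checking that commutators of the divided-power generators $\ve_\alpha^{(p^s)}$, $\binom{\vh_t}{p^u}$ (for $0\le s,u\le r$) land in the $\bZ_{(p)}$-span, and both identify the crucial point as the fact that Kostant's and Chevalley's commutation formulas produce only divided powers of index strictly below $p^{r+1}$. The paper is terser --- it simply writes down the four relevant commutators, cites \cite{Kos} and \cite{Cham} for the explicit formulas, and illustrates the index bound with the $G_2$ example --- whereas you spell out the triangular change-of-basis for linear independence and $\bQ$-spanning (which the paper takes as read) and handle the same-root and same-torus carries explicitly; but the substance is the same.
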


\begin{proof}
	For this to be well defined, we need to show that it closed under multiplication. It is clearly enough to show that certain commutators lie inside $U^{\llbracket r\rrbracket}(\fg)_{\bZ_{(p)}}$. Let us introduce the notation $$\widetilde{U}^{\llbracket r\rrbracket}(\fg)_{\bZ_{(p)}}\coloneqq\bZ_{(p)}\{\prod_{\alpha\in\Phi^+}\ve_{\alpha}^{(i_\alpha)}\prod_{t=1}^d\binom{\vh_{t}}{k_t}\prod_{\alpha\in\Phi^+}\ve_{-\alpha}^{(j_\alpha)}\, ;\, 0\leq i_\alpha,j_\alpha,k_t<p^{r+1}\},$$ which lies inside $\widetilde{U}(\fg)_{\bZ_{(p)}}\cap U^{\llbracket r\rrbracket}(\fg)_{\bZ_{(p)}}$.
	
	One can now compute that, for $\alpha,\beta\in\Phi$, $1\leq t,t_1,t_2\leq d$ and $0\leq s,u<r+1$, we have
	$$[\ve_\alpha^{(p^s)},\ve_\beta^{(p^u)}]\in \widetilde{U}^{\llbracket r\rrbracket}(\fg)_{\bZ_{(p)}},$$
	$$[\ve_\alpha^{(p^s)},\ve_{-\alpha}^{(p^u)}]\in \widetilde{U}^{\llbracket r\rrbracket}(\fg)_{\bZ_{(p)}},$$
	$$[\ve_\alpha^{(p^s)},\binom{\vh_t}{p^u}]=\sum_{l=0}^{p^u-1}\binom{-\alpha(\vh_t)p^s}{p^u-l}\binom{\vh_t}{l}\ve_{\alpha}^{(p^s)}\in \widetilde{U}^{\llbracket r\rrbracket}(\fg)_{\bZ_{(p)}},$$
	$$[\binom{\vh_{t_1}}{p^s},\binom{\vh_{t_2}}{p^u}]=0.$$	
	More specifically, we know that when we write these commutators in the divided powers basis we have coefficients in $\bZ_{(p)}$ (this just follows from $\widetilde{U}(\fg)_{\bZ_{(p)}}$ being a $\bZ_{(p)}$-form). Hence, all we have to show is that none of the divided power indices exceed $p^{r+1}-1$. The first two of these calculations can be checked directly using \cite{Kos} and \cite{Cham}, while the second two are clear. For example, if $\{\alpha,\beta\}$ form the fundamental roots for a root system of type $G_2$ with $\beta$ the long root, then we have 
	$$
	[\ve_\alpha^{(p^s)},\ve_\beta^{(p^u)}]=\sum \epsilon_{k_1,k_2,k_3,k_4}\ve_\beta^{(p^u-k_1-k_2-k_3-2k_4)}(\prod_{j=1}^{3}\ve_{j\alpha+\beta}^{(k_j)}) \ve_{3\alpha+2\beta}^{(k_4)}\ve_\alpha^{(p^s-k_1-2k_2-3k_3-3k_4)}
	$$
	where the sum is over all $k_1,k_2,k_3,k_4\geq 0$, not all zero, such that $k_1+k_2+k_3+2k_4\leq p^s$ and $k_1+2k_2+3k_3+3k_4\leq p^u$ and $\epsilon_{k_1,k_2,k_3,k_4}\in\{1,-1\}$ for all $k_1,k_2,k_3,k_4$. In particular, none of the heights of the divided powers are greater than or equal to $p^{r+1}$. The rest are similar.
\end{proof}

We can hence form $U^{\llbracket r\rrbracket}(\fg)\coloneqq U^{\llbracket r\rrbracket}(\fg)_{\bZ_{(p)}}\otimes_{\bZ_{(p)}}\bK$. 

\begin{prop}\label{basis}
	There is an isomorphism of algebras $U^{\llbracket r\rrbracket}(\fg)\cong U^{[r]}(G)$.
\end{prop}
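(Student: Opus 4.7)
The plan is to build an algebra homomorphism $\phi\colon U^{[r]}(G)\to U^{\llbracket r\rrbracket}(\fg)$ via Proposition~\ref{univ}, deduce surjectivity from the evident generating set, and establish injectivity by exhibiting a spanning set of $U^{[r]}(G)$ that $\phi$ carries bijectively onto the explicit $\bK$-basis of $U^{\llbracket r\rrbracket}(\fg)$.

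\textbf{Constructing $\phi$.} First I would check $\Di_{p^{r+1}-1}(G)\subseteq U^{\llbracket r\rrbracket}(\fg)$, viewing both inside $U(\fg_{\bC})\otimes_{\bZ_{(p)}}\bK$. Using the $p$-adic valuation identity $v_p(n!)=(n-s_p(n))/(p-1)$, a short computation shows that for $i<p^{r+1}$ with base-$p$ expansion $i=\sum_s a_sp^s$ one has $v_p\bigl(i!/\prod_s(p^s!)^{a_s}\bigr)=0$, so $\ve_\alpha^{(i)}$ is a $\bZ_{(p)}^{\times}$-multiple of $\ve_\alpha^{\llbracket i\rrbracket}$. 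A triangular-matrix comparison of leading $\vh_t^k$-coefficients then shows $\binom{\vh_t}{k}$ for $k<p^{r+1}$ lies in the $\bZ_{(p)}$-span of the $\binom{\vh_t}{\llbracket j\rrbracket}$ with $j\le k$. Both the product and the bracket on $\Di_{p^{r+1}-1}^+(G)$ and on $U^{\llbracket r\rrbracket}(\fg)$ are inherited from $U(\fg_{\bC})\otimes\bK$, so the inclusion is an indexed algebra subspace homomorphism, and Proposition~\ref{univ} produces $\phi$.

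\textbf{Surjectivity and spanning.} The elements $\ve_\alpha^{(p^s)}$ and $\binom{\vh_t}{p^s}$ for $0\le s\le r$ generate $U^{\llbracket r\rrbracket}(\fg)$ as a $\bK$-algebra, as they are the indecomposable factors of the defining spanning set, and they all lie in $\Di_{p^r}^+(G)\subseteq\Di_{p^{r+1}-1}^+(G)$, whence each is in the image of $\phi$. For injectivity the key point is to show that $U^{[r]}(G)$ is $\bK$-spanned by ordered monomials $\prod_{\alpha\in\Phi^+}\ve_\alpha^{\llbracket i_\alpha\rrbracket}\prod_{t=1}^{d}\binom{\vh_t}{\llbracket k_t\rrbracket}\prod_{\alpha\in\Phi^+}\ve_{-\alpha}^{\llbracket j_\alpha\rrbracket}$, interpreted inside $U^{[r]}(G)$ as tensor products of the generators. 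Three ingredients combine: (a) $U^{[r]}(G)$ is generated by $\Di_{p^{r+1}-1}^+(G)$, whose basis of divided-power monomials can, by relation~(i), be split into products of the $\ve_\alpha^{(p^s)}$ and $\binom{\vh_t}{p^s}$; (b) the commutator identities from the preceding proposition show that every bracket among these generators lies in $\Di_{p^{r+1}-1}^+(G)$, and relation~(ii) applies to any adjacent pair because their degree sum is at most $2p^r\le p^{r+1}$; (c) a double induction, primary on total distribution-algebra degree and secondary on an inversion count relative to a fixed total order on the generators, swaps adjacent out-of-order factors via (ii), the commutator correction having strictly smaller total degree and hence being handled by the inductive hypothesis. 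Since $\phi$ takes each such ordered monomial to the corresponding basis element of $U^{\llbracket r\rrbracket}(\fg)$, linear independence in the target forces linear independence in $U^{[r]}(G)$, and $\phi$ is an isomorphism.

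\textbf{Expected difficulty.} The real work is the straightening in~(c). The explicit commutators (for instance the $G_2$-type formula in the preceding proposition) produce outputs written in the Kostant divided-power basis attached to possibly new roots $j\alpha+\beta$, and these must be re-expanded via~(a) into words in the chosen generators before the next induction step can fire. This re-expansion can increase word length; what saves the argument is that the total distribution-algebra degree drops by at least one each time, so a lexicographic well-order with degree dominant still terminates. Choosing the ordering carefully so that this termination is transparent, and checking that every rewriting stays within the degree regime where relations (i) and (ii) are available, is the one non-formal point of the proof.
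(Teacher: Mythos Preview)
Your proposal is correct and follows essentially the same strategy as the paper: build $\phi\colon U^{[r]}(G)\to U^{\llbracket r\rrbracket}(\fg)$ from the universal property, observe that the ordered monomials in the generators $\ve_\alpha^{(p^s)},\binom{\vh_t}{p^s}$ span $U^{[r]}(G)$, and conclude that $\phi$ sends a spanning set bijectively to a basis. The paper simply asserts the spanning step as ``not difficult to see''; your outline of the straightening induction (on total degree with inversion count as tiebreaker, noting $2p^r\le p^{r+1}$ so relation~(ii) is always available) is exactly the content hidden behind that phrase.
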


\begin{proof}
	
	We prove this by constructing an algebra homomorphism $U^{[r]}(G)\rightarrow U^{\llbracket r\rrbracket}(\fg)$ using the universal property and showing that it sends a basis of $U^{[r]}(G)$ to a basis of $U^{\llbracket r\rrbracket}(\fg)$. 
	
	$\Di_{p^{r+1}-1}(G)$ has $\bK$-basis $$\{\prod_{\alpha\in\Phi^+}\ve_{\alpha}^{(i_\alpha)}\prod_{t=1}^d\binom{\vh_{t}}{k_t}\prod_{\alpha\in\Phi^+}\ve_{-\alpha}^{(j_\alpha)}\, :\, \sum_{\alpha\in\Phi^+}(i_\alpha+j_\alpha) + \sum_{t=1}^d k_t<p^{r+1}\}.$$ Define $\phi:\Di_{p^{r+1}-1}(G)\rightarrow U^{\llbracket r\rrbracket}(\fg)$ by $$\phi(\prod_{\alpha\in\Phi^+}\ve_{\alpha}^{(i_\alpha)}\prod_{t=1}^d\binom{\vh_{t}}{k_t}\prod_{\alpha\in\Phi^+}\ve_{-\alpha}^{(j_\alpha)})=\prod_{\alpha\in\Phi^+}\ve_{\alpha}^{(i_\alpha)}\prod_{t=1}^d\binom{\vh_{t}}{k_t}\prod_{\alpha\in\Phi^+}\ve_{-\alpha}^{(j_\alpha)}.$$ The fact that $\phi(\delta\rho)=\phi(\delta)\phi(\rho)$ if $\delta\in \Di^{+}_i(G)$, $\rho\in \Di^{+}_j(G)$ with $i+j<p^{r+1}$ and $\phi([\delta,\rho])=[\phi(\delta),\phi(\rho)]$ if $\delta\in \Di^{+}_i(G)$, $\rho\in \Di^{+}_j(G)$ with $i+j\leq p^{r+1}$ is obvious from how basis elements in $\Di_{p^{r+1}-1}(G)$ multiply (since below the $p^{r+1}$ level, the multiplication is the same in  $U^{\llbracket r\rrbracket}(\fg)$ and $\Di(G)$). Hence we get an algebra homomorphism $\phi:U^{[r]}(G)\rightarrow U^{\llbracket r\rrbracket}(\fg)$ from the universal property.
	
	We now need some notation for the elements in $U^{[r]}(G)$. Given an integer $M=a_0+a_1p+\ldots+a_{r}p^{r}$ where $0\leq a_0,\ldots,a_{r-1}<p$ and $a_{r}\geq 0$, we shall define $$\ve_\alpha^{\llbracket M\rrbracket_\otimes}=\ve_\alpha^{\otimes a_0}\otimes(\ve_\alpha^{(p)})^{\otimes a_1}\otimes \ldots\otimes (\ve_\alpha^{(p^{r})})^{\otimes a_{r}}\in U^{[r]}(G)$$ for $\alpha\in\Phi$. Furthermore, define  $$\binom{\vh_t}{\llbracket M\rrbracket_\otimes}=\binom{\vh_t}{1}^{\otimes a_0}\otimes\binom{\vh_t}{p}^{\otimes a_1}\otimes\ldots\otimes\binom{\vh_t}{p^{r}}^{\otimes a_{r}}\in U^{[r]}(G)$$ for $1\leq t\leq d$. Then $$\phi(\bigotimes_{\alpha\in\Phi^+}\ve_{\alpha}^{\llbracket i_\alpha\rrbracket_\otimes}\bigotimes_{t=1}^d\binom{\vh_{t}}{\llbracket k_t\rrbracket_\otimes}\bigotimes_{\alpha\in\Phi^+}\ve_{-\alpha}^{\llbracket j_\alpha\rrbracket_\otimes })=\prod_{\alpha\in\Phi^+}\ve_{\alpha}^{\llbracket i_\alpha\rrbracket}\prod_{t=1}^d\binom{\vh_{t}}{\llbracket k_t \rrbracket}\prod_{\alpha\in\Phi^+}\ve_{-\alpha}^{\llbracket j_\alpha\rrbracket }.$$ 
	
	Furthermore, it is not difficult to see that the $\bigotimes_{\alpha\in\Phi^+}\ve_{\alpha}^{\llbracket i_\alpha\rrbracket_\otimes}\bigotimes_{t=1}^d\binom{\vh_{t}}{\llbracket k_t \rrbracket_\otimes}\bigotimes_{\alpha\in\Phi^+}\ve_{-\alpha}^{\llbracket j_\alpha\rrbracket_\otimes }$, for $i_\alpha,j_{-\alpha},k_t\in\bN$, span $U^{[r]}(G)$ as a vector space. They are also linearly independent, since their images under the map $\phi$ are. Thus, $\phi$ maps a basis to a basis, and the result holds.
	
	%
\end{proof}

Hence $U^{\llbracket r\rrbracket}(\fg)\cong U^{[r]}(G)$ as algebras and $U^{[r]}(G)$ has the desired basis, which we shall generally abuse notation to denote it as $ \{\prod_{\alpha\in\Phi^+}\ve_{\alpha}^{\llbracket i_\alpha\rrbracket}\prod_{t=1}^d\binom{\vh_{t}}{\llbracket k_t \rrbracket}\prod_{\alpha\in\Phi^+}\ve_{-\alpha}^{\llbracket j_\alpha\rrbracket }\quad :\quad 0\leq i_\alpha,j_\alpha,k_t\}$. 

Note that the universal property of $U(\fg)$ gives a $\bK$-algebra homomorphism $U(\fg)\to U^{[0]}(G)$. This basis guarantees that this is an isomorphism of $\bK$-algebras (in fact, of Hopf algebras, by considering the effect of the comultiplication, counit and antipode on the corresponding bases).  Hence, the representation theory of reductive Lie algebras over a field of characteristic $p>0$ as studied by Friedlander and Parshall in \cite{PF2} and \cite{PF1} exists within our theory as the case when $r=0$. One can also see this using Kaneda and Ye's construction  $\bU^{(0)}$ and Proposition~\ref{KYisom} below.

With a basis of $\Upr$ in place, we can now prove the following proposition.

\begin{prop}\label{cent}
	If $G$ is reductive, the image of $\xi_r$ is central in $\Upr$. 
\end{prop}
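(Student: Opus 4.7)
The plan is to reduce centrality of $\Img(\xi_r)$ to a finite family of commutator identities and then verify them using the characteristic-$p$ derivation identity $(\ad x)^p=\ad(x^p)$. First, by Lemma~\ref{semi} together with Proposition~\ref{zero}, the image of $\xi_r$ lies in the $\bK$-span of the finite set $\{\xi_r(\ve_\alpha^{(p^r)}),\xi_r(\binom{\vh_t}{p^r})\,:\,\alpha\in\Phi,\,1\leq t\leq d\}$, so it suffices to prove that each of these elements is central. The PBW-type basis of Proposition~\ref{basis} realises $U^{[r]}(G)$ as an algebra generated by $\{\ve_\beta^{(p^s)},\binom{\vh_u}{p^s}\,:\,\beta\in\Phi,\,1\leq u\leq d,\,0\leq s\leq r\}$, so by the Leibniz rule $[z,xy]=[z,x]y+x[z,y]$ it is enough to check $[\xi_r(y),g]=0$ for $y$ from the first list and $g$ from the second.

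Next I would exploit the standard identity $(\ad x)^p=\ad(x^p)$, valid in any associative $\bF_p$-algebra. Applied in $U^{[r]}(G)$ it gives $[y^{\otimes p},g]=(\ad y)^p(g)$, so the task becomes that of showing $(\ad y)^p(g)=[y^p_{\Di(G)},g]_{U^{[r]}(G)}$, where $y^p_{\Di(G)}\in\Di^{+}_{p^{r+1}-1}(G)$ (by Lemma~\ref{powers}) is the $p$-th power of $y$ computed in $\Di(G)$. The approach is to iterate the adjoint action $\ad y$ inside $\Di(G)$, using the $\bZ_{(p)}$-form from Proposition~\ref{basis} to make each nested commutator explicit, and then to transport the resulting identity back to $U^{[r]}(G)$. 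Two simplifications should be particularly useful: the torus divided powers all commute in $\Di(G)$, and $(\ve_\alpha^{(p^r)})^p=0$ in $\Di(G)$, because the $p$-adic valuation of the multinomial coefficient $(p^{r+1})!/(p^r!)^p$ is exactly $1$. The verification then splits into a handful of explicit cases, organised by the type of $y$ (root-vector or torus divided power) and the type of $g$.

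The main obstacle is that the tensor relation $x\otimes y=xy$ (when $\deg x+\deg y<p^{r+1}$), and its Lie-bracket counterpart (ii), hold in $U^{[r]}(G)$ only under strict grade bounds. Consequently, when iterating $(\ad y)^p$ within $U^{[r]}(G)$ the intermediate brackets eventually exceed the threshold $p^{r+1}$ and can no longer be collapsed into $\Di(G)$-products, so genuine tensor-level corrections appear. The technical heart of the proof is therefore to track these corrections carefully and to verify that the same corrections appear on both sides of $(\ad y)^p(g)=[y^p_{\Di(G)},g]$ in $U^{[r]}(G)$, so that they cancel in the difference $[\xi_r(y),g]$. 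It is precisely at this point that the reductive hypothesis enters substantively, through the explicit Chevalley-basis data encoded in the $\bZ_{(p)}$-form.
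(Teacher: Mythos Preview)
Your reduction step is fine and matches the paper: by semilinearity and Proposition~\ref{zero} one need only check that $\xi_r(\ve_\alpha^{(p^r)})$ and $\xi_r(\binom{\vh_t}{p^r})$ commute with the generators $\ve_\beta^{(p^s)},\binom{\vh_u}{p^s}$ for $s\le r$. The difficulty is in the next step, and here your proposal has a genuine gap rather than a different route.

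The identity $(\ad x)^p=\ad(x^p)$ in $U^{[r]}(G)$ correctly gives $[y^{\otimes p},g]=(\ad_{U^{[r]}(G)}y)^p(g)$, but it does not let you compute this iterated adjoint by working in $\Di(G)$. The nested bracket $(\ad y)^k(g)$, viewed in $\Di(G)$, lands in $\Di^{+}_{kp^r+p^s-k}(G)$; already for $k=p-1$ the next application of $\ad y$ requires the commutator relation at total degree $p^{r+1}+p^s-p+1$, which exceeds $p^{r+1}$ whenever $s\ge 1$. From that point on the $U^{[r]}(G)$-bracket and the $\Di(G)$-bracket genuinely diverge, and your ``track the corrections and check they cancel'' is precisely the statement to be proved, not a method for proving it. The paper in fact flags this explicitly just before the proposition: the classical argument via $\ad(\delta^p)=(\ad\delta)^p$ is exactly what fails to transfer for $r>0$.

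The paper sidesteps the iterated-bracket problem entirely. Via the isomorphism $U^{[r]}(G)\cong U^{\llbracket r\rrbracket}(\fg)=U^{\llbracket r\rrbracket}(\fg)_{\bZ_{(p)}}\otimes_{\bZ_{(p)}}\bK$ one works inside the $\bZ_{(p)}$-lattice sitting in $U(\fg_{\bC})$, where the tensor-power $(\ve_\alpha^{(p^r)})^{\otimes p}$ in $U^{[r]}(G)$ corresponds to the honest product $(\ve_\alpha^{(p^r)})^{p}=\tfrac{p^{r+1}!}{(p^r!)^p}\,\ve_\alpha^{(p^{r+1})}$ in $U(\fg_{\bC})$. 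One then computes the \emph{single} commutator $[\ve_\alpha^{(p^{r+1})},\ve_\beta^{(p^s)}]$ once, using the explicit Chevalley commutation formulas, and checks case by case that it lies in the bounded piece $\widetilde{U}^{\llbracket r\rrbracket}(\fg)_{\bZ_{(p)}}\subset U^{\llbracket r\rrbracket}(\fg)_{\bZ_{(p)}}$ (no divided power of height $\ge p^{r+1}$ appears). Since $\tfrac{p^{r+1}!}{(p^r!)^p}$ has $p$-adic valuation $1$, the whole commutator lies in $p\cdot U^{\llbracket r\rrbracket}(\fg)_{\bZ_{(p)}}$ and dies mod $p$. The torus case $\xi_r(\binom{\vh_t}{p^r})$ is handled not via $(\ad y)^p$ either, but by the weight-shift identity $\binom{\vh_t}{p^r}^{\otimes p}\ve_\alpha^{(p^s)}=\ve_\alpha^{(p^s)}\binom{\vh_t-\alpha(\vh_t)p^s}{p^r}^{\otimes p}$, expanded binomially and simplified using $\binom{\vh_t}{l}^{\otimes p}=\binom{\vh_t}{l}$ for $l<p^r$. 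So the paper's proof never iterates $\ad$; it performs one lift-and-reduce per pair of generators.
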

\begin{proof}
	
	%
	By Propositions \ref{semi} and \ref{zero}, it is enough to show that $\xi_r(\ve_\alpha^{(p^r)})$ and $\xi_r(\binom{\vh_t}{p^r})$ are central for $\alpha\in\Phi$ and $1\leq t\leq d$. We know that $\xi_r(\ve_\alpha^{(p^r)})=(\ve_\alpha^{(p^r)})^{\otimes p}$ and $\xi_r(\binom{\vh_t}{p^r})=\binom{\vh_t}{p^r}^{\otimes p} - \binom{\vh_t}{p^r}$. By the given basis of $\Upr$, it is enough to show that $\xi_r(\ve_\alpha^{(p^r)})$ and $\xi_r(\binom{\vh_t}{p^r})$ commute with each element of $\Dipri$.
	
	Observe that in the notation coming from the $\bZ_{(p)}$-form the multiplicative notation means the tensor product notation in $U^{[r]}(G)$. This gives us that for $\alpha,\beta\in \Phi$ with $\alpha\neq -\beta$ and $0<s\leq r$, Lemma 8 in \cite{Cham} shows
	$$[(\ve_\alpha^{(p^r)})^{p},\ve_\beta^{(p^s)}]=\frac{p^{r+1}!}{(p^r!)^p}[\ve_\alpha^{(p^{r+1})},\ve_\beta^{(p^s)}]\in \frac{p^{r+1}!}{(p^r!)^p} U^{\llbracket r\rrbracket}(\fg)_{\bZ_{(p)}},$$	
	$$[(\ve_\alpha^{(p^r)})^{p},\ve_{-\alpha}^{(p^s)}]=\frac{p^{r+1}!}{(p^r!)^p}[\ve_\alpha^{(p^{r+1})},\ve_{-\alpha}^{(p^s)}]\in \frac{p^{r+1}!}{(p^r!)^p} U^{\llbracket r\rrbracket}(\fg)_{\bZ_{(p)}}.$$
	
	
	In fact, the equations from \cite[Lemma 8]{Cham} show that these commutators lie in $\frac{p^{r+1}!}{(p^r!)^p}\widetilde{U}^{\llbracket r\rrbracket}(\fg)_{\bZ_{(p)}}$, not just in $\frac{p^{r+1}!}{(p^r!)^p}U^{\llbracket r\rrbracket}(\fg)_{\bZ_{(p)}}$. 
	The reader can see this with the observation that if, for example,  $\{\alpha,\beta\}$ form the fundamental roots for a root system of type $G_2$ with $\beta$ the long root, then we have as in \cite[Lemma 8]{Cham} that 
	$$
	[\ve_\alpha^{(p^{r+1})},\ve_\beta^{(p^s)}]=\sum \epsilon_{k_1,k_2,k_3,k_4}\ve_\beta^{(p^s-k_1-k_2-k_3-2k_4)}(\prod_{j=1}^{3}\ve_{j\alpha+\beta}^{(k_j)}) \ve_{3\alpha+2\beta}^{(k_4)}\ve_\alpha^{(p^{r+1}-k_1-2k_2-3k_3-3k_4)}
	$$	
	where the sum is over all $k_1,k_2,k_3,k_4\geq 0$, not all zero, such that $k_1+k_2+k_3+2k_4\leq p^{r+1}$ and $k_1+2k_2+3k_3+3k_4\leq p^s$ and $\epsilon_{k_1,k_2,k_3,k_4}\in\{1,-1\}$ for all $k_1,k_2,k_3,k_4$. In particular, none of the divided powers are greater than or equal to $p^{r+1}$.
	
	Since $\frac{p^{r+1}!}{(p^r!)^p}\in\bZ$ vanishes modulo $p$, the above equations hence show that the commutators vanish in $U^{\llbracket r\rrbracket}(\fg)= U^{\llbracket r\rrbracket}(\fg)_{\bZ_{(p)}}\otimes_{\bZ_{(p)}}\bK$.
	
	Furthermore, 
	$$[(\ve_\alpha^{(p^r)})^{p},\binom{\vh_t}{p^s}]=\sum_{l=0}^{p^s-1}\binom{-\alpha(\vh_t)p^{r+1}}{p^s-l}\binom{\vh_t}{l}(\ve_{\alpha}^{(p^{r})})^{p}=0,$$
	where the last equality follows from the observation that $\binom{-\alpha(\vh_t)p^{r+1}}{p^s-l}=0$ modulo $p$ for all $0\leq l\leq p^s-1$. This comes from Lucas' Theorem and the fact that $s<r+1$. This gives the centrality of $\xi_r(\ve_\alpha^{(p^r)})$. 
	For $\xi_r(\binom{\vh_t}{p^r})$ we have
	$$[\binom{\vh_t}{p^r}^{\otimes p}-\binom{\vh_t}{p^r},\binom{\vh_u}{p^s}]=0$$
	and 
	\begin{align*}
		(\binom{\vh_t}{p^r}^{\otimes p}-\binom{\vh_t}{p^r})\ve_\alpha^{(p^s)}=\ve_\alpha^{(p^s)}(\binom{\vh_t -\alpha(\vh_t)p^s}{p^r}^{\otimes p}-\binom{\vh_t -\alpha(\vh_t)p^s}{p^r}) \\ =\ve_\alpha^{(p^s)}((\sum_{l=0}^{p^r}\binom{\vh_t}{l}\binom{-\alpha(\vh_t)p^s}{p^r-l})^{\otimes p}-\sum_{l=0}^{p^r}\binom{\vh_t}{l}\binom{-\alpha(\vh_t)p^s}{p^r-l}) \\
		=\ve_\alpha^{(p^s)}(\sum_{l=0}^{p^r}\binom{\vh_t}{l}^{\otimes p}\binom{-\alpha(\vh_t)p^s}{p^r-l}-\sum_{l=0}^{p^r}\binom{\vh_t}{l}\binom{-\alpha(\vh_t)p^s}{p^r-l})\\ =\ve_\alpha^{(p^s)}(\binom{\vh_t}{p^r}^{\otimes p}-\binom{\vh_t}{p^r})
	\end{align*}	
	since $\binom{\vh_t}{l}^{\otimes p}=\binom{\vh_t}{l}$ for $l<p^r$. This gives the centrality of $\xi_r(\binom{\vh_t}{p^r})$. Hence the image of $\xi_r$ is central.

\end{proof}

\subsection{Centres}
\label{sec5.2}
As always from now on, $G$ is reductive. Let $Z_r(G)$ be the subalgebra of $Z(\Upr)$ generated by the $\xi_r(\delta)$ for $\delta\in \Dipri$. Using Propositions~\ref{basis} and \ref{cent}, we can easily see that $Z_r(G)$ is generated by $(\ve_\alpha^{(p^r)})^{\otimes p}$ for $\alpha\in \Phi$ and $\binom{\vh_t}{p^r}^{\otimes p}-\binom{\vh_t}{p^r}$ for $1\leq t\leq d$. From Proposition~\ref{basis}, it is clear that these elements are algebraically independent over $\bK$.

Note the semilinearity of $\xi_r$ induces an algebra homomorphism from $S(Y_{p^r}^{(1)})$ (the symmetric algebra on the vector space $Y_{p^r}^{(1)}$ defined above) to $Z_r(G)$. This map is bijective.

As a $Z_r(G)$-module under left multiplication, $\Upr$ is free of rank $p^{(r+1)\dim(\fg)}$ with basis
$$\{\prod_{\alpha\in\Phi^+}\ve_{\alpha}^{\llbracket i_\alpha\rrbracket}\prod_{t=1}^d\binom{\vh_{t}}{\llbracket k_t\rrbracket}\prod_{\alpha\in\Phi^+}\ve_{-\alpha}^{\llbracket j_\alpha\rrbracket }\quad ;\quad 0\leq i_\alpha,j_\alpha,k_t<p^{r+1}\}.$$
This leads us to the following proposition.

\begin{prop}\label{fingen}
	The centre $Z(\Upr)$ of $\Upr$ is a finitely generated algebra over $\bK$. As a $Z(\Upr)$-module, $\Upr$ is finitely generated.
\end{prop}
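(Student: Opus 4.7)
The plan is to use the polynomial subalgebra $Z_r(G)\subset Z(\Upr)$ as a Noetherian ``anchor'' and leverage the freeness of $\Upr$ over it to push finite generation up to the full centre.

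First I would observe that $Z_r(G)$ is isomorphic to a polynomial algebra over $\bK$ in $|\Phi|+d=\dim(\fg)$ variables, since we are told its generators $(\ve_\alpha^{(p^r)})^{\otimes p}$ and $\binom{\vh_t}{p^r}^{\otimes p}-\binom{\vh_t}{p^r}$ are algebraically independent. In particular $Z_r(G)$ is a finitely generated commutative $\bK$-algebra, hence Noetherian by Hilbert's basis theorem.

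Next, I would invoke the explicit freeness statement: $\Upr$ is a free $Z_r(G)$-module of rank $p^{(r+1)\dim(\fg)}$ with the PBW-type basis listed before the proposition. A finitely generated module over a Noetherian ring is Noetherian, so $\Upr$ is a Noetherian $Z_r(G)$-module. In particular the $Z_r(G)$-submodule $Z(\Upr)\subset\Upr$ is finitely generated over $Z_r(G)$. Combining a finite $Z_r(G)$-generating set of $Z(\Upr)$ with the finite $\bK$-algebra generators of $Z_r(G)\subset Z(\Upr)$ produces a finite $\bK$-algebra generating set for $Z(\Upr)$, giving the first assertion.

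For the second assertion, $\Upr$ is finitely generated (indeed free of finite rank) as a $Z_r(G)$-module, and $Z_r(G)\subset Z(\Upr)$, so the same finite set of module generators works over the larger ring $Z(\Upr)$, which gives the second claim immediately. There is no real obstacle here: the entire argument is a standard Noetherian/finiteness chase, and the only thing that does any work is the freeness of $\Upr$ over $Z_r(G)$, which has already been recorded from the PBW-type basis in Proposition~\ref{basis} together with the centrality established in Proposition~\ref{cent}.
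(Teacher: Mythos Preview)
Your argument is correct and is precisely the standard Noetherian/Artin--Tate style chase that the paper has in mind: the paper gives no explicit proof, but the sentence ``As a $Z_r(G)$-module under left multiplication, $\Upr$ is free of rank $p^{(r+1)\dim(\fg)}$ \ldots\ This leads us to the following proposition'' is exactly the setup you exploit. There is nothing to add.
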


\begin{theorem}\label{findim}
	Let $E$ be an irreducible $\Upr$-module. Then $E$ is finite-dimensional, of dimension less than or equal to $p^{(r+1)\dim(\fg)}$.
\end{theorem}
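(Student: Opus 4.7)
The plan is to exploit the fact, established immediately above, that $\Upr$ is free of rank $N = p^{(r+1)\dim(\fg)}$ as a module over its central polynomial subalgebra $Z_r(G)$. The strategy is to show that $Z_r(G)$ acts on any simple $\Upr$-module $E$ through a single character, which reduces $E$ to a cyclic module over a $\bK$-algebra of $\bK$-dimension $N$.

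First I would pick $0 \neq e \in E$; simplicity gives $E = \Upr \cdot e$, and combining this with the $Z_r(G)$-basis $\{b_1,\ldots,b_N\}$ of $\Upr$ shows that $E$ is generated as a $Z_r(G)$-module by the $N$ elements $b_i e$. Let $J = \mathrm{Ann}_{Z_r(G)}(E)$. Then $J$ is a proper ideal (since $E\neq 0$) and one checks it is prime: if $z_1 z_2 \in J$ and $z_2 E \neq 0$, then centrality of $Z_r(G)$ makes $z_2 E$ a nonzero $\Upr$-submodule of $E$, hence $z_2 E = E$ by simplicity, forcing $z_1 E = z_1 z_2 E = 0$.

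The crux — and the point where the argument is most delicate — is showing that $J$ is in fact a maximal ideal. Suppose, for contradiction, that $\mathfrak{m}$ is a maximal ideal of $Z_r(G)$ with $J \subsetneq \mathfrak{m}$. Then $\mathfrak{m} E$ is a $\Upr$-submodule of $E$, so by simplicity either $\mathfrak{m} E = 0$ or $\mathfrak{m} E = E$. The first option forces $\mathfrak{m} \subseteq J$, contradicting strict containment. In the second, localising at $\mathfrak{m}$ and applying Nakayama's lemma to the finitely generated $Z_r(G)_\mathfrak{m}$-module $E_\mathfrak{m}$ gives $E_\mathfrak{m} = 0$; finite generation of $E$ over $Z_r(G)$ then yields some $s \in Z_r(G) \setminus \mathfrak{m}$ with $sE = 0$, and therefore $s \in J \subseteq \mathfrak{m}$, a contradiction.

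Since $Z_r(G)$ is a polynomial ring in finitely many variables over the algebraically closed field $\bK$, Hilbert's Nullstellensatz gives $Z_r(G)/J \cong \bK$, so $Z_r(G)$ acts on $E$ through a character $\chi : Z_r(G) \to \bK$. The $\Upr$-action on $E$ therefore factors through $\Upr / (\ker\chi)\Upr$, which by the freeness statement above has $\bK$-dimension exactly $N$. As $E$ is cyclic, it is a quotient of this finite-dimensional algebra, giving $\dim_\bK E \leq N = p^{(r+1)\dim(\fg)}$ as required.
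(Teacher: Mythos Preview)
Your argument is correct and is precisely the standard commutative-algebra proof that the paper is invoking when it writes ``This follows in exactly the same way as Theorem A.4 in \cite{Jan3}.'' Jantzen's Theorem A.4 is the statement that an associative algebra which is a finite module over a finitely generated commutative central subalgebra over an algebraically closed field has all its irreducibles finite-dimensional, with the dimension bounded by the rank; your write-up reproduces that argument faithfully in the present setting, using exactly the two inputs the paper has just established (that $Z_r(G)$ is a polynomial ring and that $\Upr$ is free of rank $p^{(r+1)\dim\fg}$ over it).
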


\begin{proof}
	This follows in exactly the same way as Theorem A.4 in \cite{Jan3}.
\end{proof}

\subsection{Comparison with Kaneda-Ye Construction}
\label{sec3.3}

Recall that Kaneda and Ye in \cite{KY} construct the algebra $\bU^{(r)}$ as 
$$\bU^{(r)}\coloneqq \frac{T_{\bK}(\Di_{2p^r-1}(G))}{\langle \lambda-\lambda \epsilon_G,\,\,\delta\otimes\delta' - \delta'\otimes \delta - [\delta,\delta'],\,\,\,\delta\otimes\delta''-\delta\delta''\,\,\vert\,\,\lambda\in\bK,\,\delta''\in\Di_{p^r-1}(G),\,\,\delta,\delta'\in\Di_{p^r}(G)\,\rangle},$$
with $\epsilon_G$ the counit of $G$. 

\begin{prop}\label{KYisom}
	The algebras $\bU^{(r)}$ and $\Upr$ are isomorphic.
\end{prop}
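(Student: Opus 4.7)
The plan is to build a natural algebra map $\phi\colon \bU^{(r)}\to \Upr$ by inclusion of generators, and then to show it is an isomorphism by matching PBW-type spanning sets on the two sides.

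First I would construct $\phi$. Since $2p^r-1\leq p^{r+1}-1$, we have an inclusion $\Di_{2p^r-1}(G)\hookrightarrow \Di_{p^{r+1}-1}(G)$, which composes with the natural map to $\Upr=U^{[p^{r+1}-1]}(\Di(G))$ (using the unital $\widehat{U}^{[k]}$-variant from Section~\ref{sec3.1}) to give a linear map that extends to an algebra map $T_\bK(\Di_{2p^r-1}(G))\to \Upr$. To see that this descends to $\bU^{(r)}$, observe that the three Kaneda-Ye relations are each special cases of the defining relations of $\Upr$: the scalar identification $\lambda=\lambda\epsilon_G$ is automatic in the unital version; the commutation relation for $\delta,\delta'\in\Di_{p^r}(G)$ is an instance of relation~(ii) since $p^r+p^r\leq p^{r+1}$; and the multiplication relation for $\delta\in\Di_{p^r}(G)$ and $\delta''\in\Di_{p^r-1}(G)$ is an instance of relation~(i) since $2p^r-1<p^{r+1}$.

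Surjectivity of $\phi$ is immediate from Proposition~\ref{basis}: the basis of $\Upr$ given there consists of ordered tensor products in the ``atoms'' $\ve_\alpha^{(p^s)}$ and $\binom{\vh_t}{p^s}$ for $0\leq s\leq r$, and each such atom lies in $\Di_{p^r}(G)\subseteq \Di_{2p^r-1}(G)$, hence in the image of $\phi$.

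For injectivity I would exhibit a spanning set $\mathcal{B}$ of $\bU^{(r)}$ of the same ordered-tensor PBW shape, so that $\phi$ sends $\mathcal{B}$ bijectively to the basis of $\Upr$ from Proposition~\ref{basis}. Linear independence of the image then forces $\mathcal{B}$ itself to be linearly independent in $\bU^{(r)}$ and $\phi$ to be a bijection. To show $\mathcal{B}$ spans $\bU^{(r)}$, take a pure tensor $\delta_1\otimes\cdots\otimes\delta_n$ in $T_\bK(\Di_{2p^r-1}(G))$, expand each $\delta_i$ in the divided-power basis of $\Di_{2p^r-1}(G)$, and split each resulting monomial as a product $\delta\delta''$ with $\delta\in\Di_{p^r}(G)$ and $\delta''\in\Di_{p^r-1}(G)$; the third Kaneda-Ye relation identifies this product with the tensor $\delta\otimes\delta''$, and iterating inside the smaller factor reduces everything to tensor products of atoms. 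The second Kaneda-Ye relation then provides the commutators needed to reorder the atoms into the fixed PBW order, with lower-grade corrections absorbed by a descending induction on total grade.

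The main obstacle is the verification that this reduction genuinely terminates in the desired PBW form. Concretely, one must track how the $p$-adic digit decomposition implicit in the notation $\ve_\alpha^{\llbracket M\rrbracket}$ interacts with the divided-power identity $\ve_\alpha^{(a)}\ve_\alpha^{(b)}=\binom{a+b}{a}\ve_\alpha^{(a+b)}$, check that every element of $\Di_{2p^r-1}(G)$ really does split with one factor of grade $\leq p^r$ and one of grade $\leq p^r-1$ so that the third Kaneda-Ye relation applies, and confirm that commutation steps produce only corrections of strictly smaller total grade so the induction closes. These are routine but tedious verifications given the explicit PBW basis for $\Di(G)$.
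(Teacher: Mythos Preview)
Your proposal is correct and follows essentially the same approach as the paper: both build the map $\bU^{(r)}\to\Upr$ from the inclusion of generators via the relevant universal property, read off surjectivity from the PBW basis of Proposition~\ref{basis}, and establish injectivity by showing that the same ordered-monomial set spans $\bU^{(r)}$. The paper simply asserts that the spanning argument of Proposition~\ref{basis} carries over verbatim to $\bU^{(r)}$ and leaves the verification as an exercise, whereas you sketch explicitly how the Kaneda--Ye relations let one split elements of $\Di_{2p^r-1}(G)$ into tensors of atoms and then reorder; the content is the same.
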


\begin{proof}
	
	The algebra $\bU^{(r)}$ has a clear universal property, which causes the inclusion $\Di_{2p^r-1}(G)\hookrightarrow\Upr$ to induce an algebra homomorphism $\bU^{(r)}\to \Upr$. The surjectivity of this homomorphism is obvious from the basis constructed in Section~\ref{sec3.2}.
	
	It is left as an exercise for the reader to show that the proof of Proposition~\ref{basis}, showing that the algebra $\Upr$ has the given basis, applies equally well to the algebra $\bU^{(r)}$. This guarantees that the algebra homomorphism $\bU^{(r)}\to \Upr$ is an isomorphism.
\end{proof}

\section{Connection with other algebras}
\label{sec4}
\subsection{Universal enveloping algebra}
\label{sec4.1}
%

Recalling that reductive algebraic groups are defined over $\bF_p$, we may consider the Frobenius kernel $G_{(s)}$ ($s\in\bN$) as the kernel of the \emph{geometric Frobenius endomorphism} $F_g^s:G\to G$, i.e. the endomorphism of $G$ corresponding to the Hopf algebra homomorphism $\bF_p[G]\otimes_{\bF_p}\bK\to \bF_p[G]\otimes_{\bF_p}\bK$ which sends $f\otimes a$ to $f^{p^s}\otimes a$. Applying the distribution functor to $F_g^s$, we get a Hopf algebra homomorphism
$$\Xi_s:\Di(G)\rightarrow\,\Di(G),\qquad \Xi_s(\delta)(f\otimes a)=\delta(f^{p^s}\otimes a).$$

\begin{prop}\label{Frob}
	For each $r,s\in\bN$, the map $\Xi_s$ induces a Hopf algebra homomorphism $\Upsilon_{r,s}:U^{[r]}(G)\rightarrow U^{[r-s]}(G)$.
\end{prop}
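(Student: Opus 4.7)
The plan is to apply the universal property from Proposition~\ref{univ} to the composition $\Di_{p^{r+1}-1}^{+}(G) \xrightarrow{\Xi_s} \Di(G) \hookrightarrow U^{[r-s]}(G)$. Throughout I will assume $s \leq r$, so that $U^{[r-s]}(G)$ is defined; the crucial preliminary point is to understand how $\Xi_s$ interacts with the filtration by order of differential operators.

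First, I would establish the effect of $\Xi_s$ on the filtration. Since $\Xi_s(\delta)(f) = \delta(f^{p^s})$ for $\delta \in \Di(G)$ and $f \in \bK[G]$, and since $f \in I_1^{m+1}$ gives $f^{p^s} \in I_1^{(m+1)p^s}$, one sees that $\delta \in \Di_k^{+}(G)$ implies $\Xi_s(\delta)(I_1^{m+1}) = 0$ whenever $(m+1)p^s \geq k+1$. Consequently,
$$\Xi_s\bigl(\Di_k^{+}(G)\bigr) \subset \Di_{\lceil (k+1)/p^s \rceil - 1}^{+}(G).$$
Applied with $k = p^{r+1}-1$, this restricts $\Xi_s$ to a map $\Di_{p^{r+1}-1}^{+}(G) \to \Di_{p^{r-s+1}-1}^{+}(G)$, which I compose with the inclusion into $U^{[r-s]}(G)$ to obtain a $\bK$-linear map $\phi\colon \Di_{p^{r+1}-1}^{+}(G) \to U^{[r-s]}(G)$.

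Next I would verify that $\phi$ is an indexed algebra subspace homomorphism. For $x \in \Di_i^{+}(G)$ and $y \in \Di_j^{+}(G)$, the bound above ensures that the orders of $\Xi_s(x)$ and $\Xi_s(y)$ sum to at most $\lceil (i+1)/p^s \rceil + \lceil (j+1)/p^s \rceil - 2 \leq (i+j)/p^s$. When $i+j < p^{r+1}$, this is strictly less than $p^{r-s+1}$, so by the defining relations of $U^{[r-s]}(G)$ the product $\Xi_s(x)\Xi_s(y)$ computed in $\Di(G)$ already represents its image in $U^{[r-s]}(G)$; it equals $\Xi_s(xy)$ because $\Xi_s$ is an algebra homomorphism on $\Di(G)$. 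The same bound with the weaker inequality $i+j \leq p^{r+1}$ places the commutator in the range where the bracket in $U^{[r-s]}(G)$ coincides with that in $\Di(G)$, yielding $\phi([x,y]) = [\phi(x), \phi(y)]$. Proposition~\ref{univ} then delivers a unique algebra homomorphism $\Upsilon_{r,s}\colon U^{[r]}(G) \to U^{[r-s]}(G)$ extending $\phi$.

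Finally, to promote $\Upsilon_{r,s}$ to a Hopf algebra homomorphism, I would exploit the construction behind Corollary~\ref{Hopf}: the comultiplication, counit and antipode on each $U^{[k]}(G)$ are themselves produced from the corresponding Hopf structure maps on $\Di(G)$ via Proposition~\ref{univ}. Since $\Xi_s$ is a Hopf algebra endomorphism of $\Di(G)$, the two indexed algebra subspace homomorphisms $\Di_{p^{r+1}-1}^{+}(G) \to U^{[r-s]}(G) \otimes U^{[r-s]}(G)$ given by $(\Upsilon_{r,s}\otimes\Upsilon_{r,s})\circ\Delta$ and $\Delta\circ\Upsilon_{r,s}$ agree on generators, and likewise for the counit and antipode, so uniqueness forces $\Upsilon_{r,s}$ to intertwine all the structure maps. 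I expect the main obstacle to be the bookkeeping in the second paragraph: one must check the ceiling-function inequalities sharply enough to ensure the total grade stays below $p^{r-s+1}$ in the key range, as this is precisely what lets one transport products and brackets from $\Di(G)$ into $U^{[r-s]}(G)$ without correction terms.
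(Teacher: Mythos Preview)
Your proposal is correct and follows essentially the same route as the paper's proof: both establish the filtration bound $\Xi_s(\Di_k^{+}(G))\subset\Di_{\lceil (k+1)/p^s\rceil-1}^{+}(G)$, use the resulting inequality $\lceil(i+1)/p^s\rceil + \lceil(j+1)/p^s\rceil - 2 \leq (i+j)/p^s$ to verify the indexed algebra subspace homomorphism conditions, apply Proposition~\ref{univ}, and then deduce the Hopf compatibility from that of $\Xi_s$ together with the construction in Corollary~\ref{Hopf}. The only cosmetic difference is that you restrict to $s\leq r$ at the outset, whereas the paper leaves the case $s>r$ implicit (where $\Xi_s$ annihilates $\Di_{p^{r+1}-1}^{+}(G)$ entirely).
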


\begin{proof}
	First, note that if $f\otimes a\in I_1^{k+1}$, with $f\in\bF_p[G]$ and $a\in\bK$, then $\Xi_s(\delta)(f\otimes a)=\delta(f^{p^s}\otimes a)\subset\delta(I_1^{p^s(k+1)})$. So if $\delta\in\Di_m(G)$ for $m\in\bN$, $\Xi_s(\delta)\in\Di_n(G)$ for $n\geq \frac{m+1}{p^s}-1$. Now, observe that $\delta(1\otimes 1)=0$ implies $\Xi_s(\delta)(1\otimes 1)=0$, so $\delta\in\Di^+_m(G)$ for $m\in\bN$ in fact implies that $\Xi_s(\delta)\in \Di^+_n(G)$ for $n\geq \frac{m+1}{p^s}-1$. We can deduce that if $\delta\in\Di^+_m(G)$ for $m<p^s$ then $\Xi_s(\delta)\in \Di_0^{+}(G)=0$ since $\frac{m+1}{p^s}-1\leq 0$. Hence, $\Xi_s(\Di^+_m(G))=0$ for $m<p^s$. Similarly, if $\delta\in\Di^{+}_{p^{r+1}-1}(G)$ then $\Xi_s(\delta)\in\,\Di_{p^{r-s+1}-1}^+(G)$. 
	
	Furthermore $\Xi_s:\Di^{+}_{p^{r+1}-1}(G)\rightarrow \Di_{p^{r-s+1}-1}^+(G)\hookrightarrow U^{[r-s]}(G)$ is an indexed algebra homomorphism. This follows because if $\delta\in\Di_i^{+}(G)$ and $\mu\in\Di_j^{+}(G)$ with $i+j<p^{r+1}$ then $\Xi_s(\delta)\in \Di_{\tiny \lceil\frac{i+1}{p^s}\rceil-1}^{+}(G)$ and $\Xi_s(\mu)\in \Di_{\tiny \lceil\frac{j+1}{p^s}\rceil-1}^{+}(G)$ (here $\lceil x\rceil$ denotes the smallest integer $\geq x$), and 
	$$\left\lceil\frac{i+1}{p^s}\right\rceil-1 + \left\lceil\frac{j+1}{p^s}\right\rceil-1\leq \frac{i+j}{p^s}<p^{r-s+1},$$ and similarly for the commutator. Hence the universal property gives an algebra homomorphism $\Upsilon_{r,s}:U^{[r]}(G)\rightarrow U^{[r-s]}(G)$.
	
	The fact that $\Upsilon_{r,s}$ is a Hopf algebra homomorphism follows from the fact that $\Xi_s$ is a Hopf algebra homomorphism and the fact that the comultiplication, counit and antipode of $\Upr$ come from the corresponding maps on $\Di(G)$.
	
\end{proof}

Let $M$ be a $G$-module. Since $F_g^s:G\to G$ is an endomorphism, it induces a new $G$-module structure on $M$. We denote $M$ with this induced $G$-module structure by $M^{[s]}$.

\begin{lemma}\label{equiv}
	$\Upsilon_{r,s}:U^{[r]}(G)\to U^{[r-s]}(G)^{[s]}$ is $G$-equivariant for all $r,s\in\bN$.
\end{lemma}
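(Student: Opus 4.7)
The strategy is to reduce $G$-equivariance of $\Upsilon_{r,s}$ to that of $\Xi_s$, which in turn reduces to a formal property of the group scheme endomorphism $F_g^s$. Throughout, $G$ acts on $\Di(G)$ via the adjoint action, i.e.\ $\Di(\mathrm{Conj}_g)$ where $\mathrm{Conj}_g(h)=ghg^{-1}$; this action preserves the filtration $\{\Di_k(G)\}$, the multiplication, the commutator, and the augmentation ideal $\Di^+_k(G)$, so by functoriality of the construction (cf.\ Corollary~\ref{Hopf} and Proposition~\ref{univ}) it descends to a Hopf algebra action of $G$ on each $U^{[k]}(G)$. The twisted $G$-module structure on $U^{[r-s]}(G)^{[s]}$ is by definition obtained by composing the adjoint action with $F_g^s$.

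First I would record that $F_g^s:G\to G$ intertwines conjugation on the source with its $F_g^s$-twist on the target, because it is a group homomorphism:
\[
F_g^s(ghg^{-1})=F_g^s(g)F_g^s(h)F_g^s(g)^{-1}
\qquad\text{for all } g,h\in G.
\]
Applying the distribution functor, this immediately gives that $\Xi_s:\Di(G)\to\Di(G)^{[s]}$ is $G$-equivariant, i.e.\ $\Xi_s(g\cdot\mu)=F_g^s(g)\cdot\Xi_s(\mu)$ for $g\in G$ and $\mu\in\Di(G)$.

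Next I would transfer this equivariance to $\Upsilon_{r,s}$ using the universal property of Proposition~\ref{univ}. Recall from the proof of Proposition~\ref{Frob} that $\Upsilon_{r,s}$ is the unique algebra homomorphism $U^{[r]}(G)\to U^{[r-s]}(G)$ whose restriction to $\Di^+_{p^{r+1}-1}(G)\subset U^{[r]}(G)$ coincides with $\Xi_s$ (followed by the canonical map into $U^{[r-s]}(G)$). Fix $g\in G$ and consider the two algebra homomorphisms $U^{[r]}(G)\to U^{[r-s]}(G)$ given by $x\mapsto \Upsilon_{r,s}(g\cdot x)$ and $x\mapsto F_g^s(g)\cdot \Upsilon_{r,s}(x)$; both are induced via the universal property from indexed algebra subspace homomorphisms $\Di^+_{p^{r+1}-1}(G)\to U^{[r-s]}(G)$, and on this subspace they agree by the equivariance of $\Xi_s$ established above. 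The uniqueness clause of Proposition~\ref{univ} forces them to coincide on all of $U^{[r]}(G)$, which is exactly the claimed $G$-equivariance.

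The only point requiring minor care is to check that the adjoint $G$-action really does respect the defining relations (i)--(ii) of $Q_{p^{r+1}-1}$, so that it descends to $U^{[r]}(G)$ in the first place; this is routine since conjugation acts by Hopf algebra automorphisms of $\Di(G)$ and hence commutes with both the multiplication and the bracket, preserving the grade conditions. Given this, the argument above is essentially formal, and no genuine obstacle is anticipated.
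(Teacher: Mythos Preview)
Your proposal is correct and follows essentially the same approach as the paper: reduce $G$-equivariance of $\Upsilon_{r,s}$ to equivariance of the underlying map $\Xi_s$ on $\Di^{+}_{p^{r+1}-1}(G)$ via the universal property, and observe that the latter holds because the geometric Frobenius $F_g^s$ is a group homomorphism and hence intertwines conjugation with its twist. The paper's proof is simply a terser version of exactly this argument.
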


\begin{proof}
	This will follow immediately from the same fact for $\Di^{+}_{p^{r+1}-1}(G)\rightarrow \Di_{p^{r-s+1}-1}^+(G)^{[s]}$. For this to hold, it is enough that the geometric Frobenius commutes with conjugation (where in the codomain the conjugation is pre-composed with the geometric Frobenius). This condition holds since $F_g^s$ is a homomorphism.
\end{proof}

\begin{cor}\label{surj}
	$\Upsilon_{r,s}$ is surjective if $r\geq s$.
\end{cor}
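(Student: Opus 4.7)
The plan is to show that $\Upsilon_{r,s}$ hits an algebra-generating set of $U^{[r-s]}(G)$. By the basis established in Proposition~\ref{basis}, the algebra $U^{[r-s]}(G)$ is generated by the elements $\ve_\alpha^{(p^k)}$ for $\alpha\in\Phi$ and $\binom{\vh_t}{p^k}$ for $1\leq t\leq d$, with $0\leq k\leq r-s$. Thus it suffices to exhibit preimages of each of these under $\Upsilon_{r,s}$.

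The key identities I would establish in $\Di(G)$ are
$$\Xi_s\bigl(\ve_\alpha^{(p^{k+s})}\bigr)=\ve_\alpha^{(p^k)}\qquad\text{and}\qquad \Xi_s\!\left(\binom{\vh_t}{p^{k+s}}\right)=\binom{\vh_t}{p^k}.$$
Since $r\geq s$ and $0\leq k\leq r-s$ force $p^{k+s}\leq p^r<p^{r+1}$, the preimages $\ve_\alpha^{(p^{k+s})}$ and $\binom{\vh_t}{p^{k+s}}$ lie in $\Di^+_{p^{r+1}-1}(G)$ and hence descend to elements of $\Upr$. By the construction of $\Upsilon_{r,s}$ from $\Xi_s$ given in Proposition~\ref{Frob}, they then map under $\Upsilon_{r,s}$ to the required generators of $U^{[r-s]}(G)$, yielding surjectivity.

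To verify the two identities I would reduce to the root subgroup $U_\alpha\cong\bG_a$ and the maximal torus $T\cong(\bG_m)^d$, both of which embed into $G$ compatibly with the geometric Frobenius $F_g^s$. On $\bG_a$ with coordinate $x$ the distribution $\partial^{(n)}$ is characterised by $\partial^{(n)}(x^m)=\delta_{m,n}$, while $F_g^s$ is $x\mapsto x^{p^s}$, so $\Xi_s(\partial^{(n)})(x^m)=\delta_{mp^s,n}$; this vanishes unless $p^s\mid n$, in which case $\Xi_s(\partial^{(p^{k+s})})=\partial^{(p^k)}$, giving the first identity after pushing through the embedding $U_\alpha\hookrightarrow G$. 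For the second, on a character coordinate $y_t$ of $T$ one has $\binom{\vh_t}{k}(y_t^m)=\binom{m}{k}$, so $\Xi_s\bigl(\binom{\vh_t}{k}\bigr)(y_t^m)=\binom{mp^s}{k}$; Lucas' theorem shows this vanishes mod $p$ unless $p^s\mid k$ and equals $\binom{m}{k/p^s}\pmod p$ otherwise, which matches $\binom{\vh_t}{k/p^s}(y_t^m)$ and yields the second identity.

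The main obstacle is the concrete calculation of $\Xi_s$ on divided powers in the last paragraph: it is elementary but needs care with the Frobenius-compatibility of the root-subgroup and torus embeddings, and with the mod-$p$ Lucas computation for the torus case. Once these compatibility identities are established, surjectivity follows immediately from the generating-set argument of the first paragraph.
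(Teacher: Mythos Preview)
Your proof is correct and follows essentially the same approach as the paper: both argue surjectivity by showing that $\Xi_s$ sends the generators $\ve_\alpha^{(p^{k+s})}$ and $\binom{\vh_t}{p^{k+s}}$ of $\Upr$ to the generators $\ve_\alpha^{(p^k)}$ and $\binom{\vh_t}{p^k}$ of $U^{[r-s]}(G)$. The paper simply asserts these identities (citing Jantzen), while you spell out the verification via the $\bG_a$ and $\bG_m$ reductions and Lucas' theorem; your version is more detailed but not a different route.
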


\begin{proof}
	We can see by explicit calculation (cf. \cite{Jan}) that $\Xi_s(\ve_\alpha^{(p^r)})=\ve_\alpha^{(p^{r-s})}$ and $\Xi_s(\binom{\vh_t}{p^r})=\binom{\vh_t}{p^{r-s}}$ for $\alpha\in\Phi$, $1\leq t\leq d$.
\end{proof}


A special case of the previous observation is that when $r=s$ the above process gives a surjective algebra homomorphism $\Upsilon_{r,r}\colon U^{[r]}(G)\to U(\fg)$, and a surjective $G$-module homomorphism $\Upsilon_{r,r}\colon U^{[r]}(G)\to U(\fg)^{[r]}$.


\section{Representation Theory}
\label{sec6}
\subsection{Deformation Algebras}
\label{sec6.1}
In this section we start to consider the representation theory of the algebra $U^{[r]}(G)$. From Theorem \ref{bij}, we have the immediate result:

\begin{cor}
	There is a bijection between the set of (isomorphism classes of) indexed $\Di_{p^{r+1}-1}^+(G)$-modules and the set of (isomorphism classes of) $U^{[r]}(G)$-modules.
\end{cor}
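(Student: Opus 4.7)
The plan is to observe that this is essentially a direct specialisation of Proposition~\ref{bij} (the universal property-driven bijection for the general filtered Hopf algebra setup in Section~\ref{sec3.1}) to the case $A = \Di(G)$, $k = p^{r+1}-1$. I have already noted at the start of Section~\ref{sec5} that $\Di(G) = \bigcup_{k\in\bN}\Di_k(G)$ is a filtered Hopf algebra with $\Di_0(G)=\bK$, that its associated graded is commutative, and that $\Di_k^+(G) = \Di_k(G)^+$. So $\Di(G)$ satisfies the hypotheses required for the whole machinery of Section~\ref{sec3.1}.

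Concretely, the first step is to recall that by definition $U^{[r]}(G) = U^{[p^{r+1}-1]}(\Di(G))$. Then, applying Proposition~\ref{bij} to $A=\Di(G)$ and $k=p^{r+1}-1$ gives the desired bijection between isomorphism classes of indexed $\Di_{p^{r+1}-1}^+(G)$-modules and isomorphism classes of $U^{[p^{r+1}-1]}(\Di(G))$-modules, which is precisely the claimed bijection.

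For completeness one can also spell out the bijection explicitly: an indexed $\Di_{p^{r+1}-1}^+(G)$-module is a $\bK$-vector space $M$ together with an indexed algebra subspace homomorphism $\theta\colon \Di_{p^{r+1}-1}^+(G) \to \End(M)$. By the universal property (Proposition~\ref{univ}) such a $\theta$ extends to a unique algebra homomorphism $\overline{\theta}\colon U^{[r]}(G) \to \End(M)$, giving $M$ the structure of a $U^{[r]}(G)$-module. Conversely, any $U^{[r]}(G)$-module $(M,\rho)$ restricts along $\iota_Q\colon \Di_{p^{r+1}-1}^+(G)\to U^{[r]}(G)$ to an indexed $\Di_{p^{r+1}-1}^+(G)$-module. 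These assignments are mutually inverse and preserve module homomorphisms (a linear map $\phi\colon M_1\to M_2$ intertwines the $\theta_i$ on $\Di_{p^{r+1}-1}^+(G)$ if and only if it intertwines the $\overline{\theta_i}$ on all of $U^{[r]}(G)$, since $\iota_Q(\Di_{p^{r+1}-1}^+(G))$ generates $U^{[r]}(G)$ as an algebra), so in particular they descend to a bijection on isomorphism classes.

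There is no real obstacle here: the statement is formal and follows mechanically by quoting Proposition~\ref{bij} at the specific filtered Hopf algebra $\Di(G)$ and the specific filtration index $p^{r+1}-1$. The only thing one has to check is that $\Di(G)$ genuinely satisfies the hypotheses of Section~\ref{sec3.1}, which has already been recorded at the beginning of Section~\ref{sec5}.
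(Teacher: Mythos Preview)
Your proposal is correct and matches the paper's approach exactly: the paper simply records this corollary as an immediate consequence of Proposition~\ref{bij} applied with $A=\Di(G)$ and $k=p^{r+1}-1$, which is precisely what you do (with more detail than the paper bothers to include).
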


One of the most important differences between the representation theory of Lie algebras in characteristic zero and in positive characteristic is the fact that in characteristic $p>0$ all irreducible representations of $U(\fg)$ are finite-dimensional. Theorem \ref{findim} tells us that we can conclude a similar result for irreducible $U^{[r]}(G)$-modules. The natural question to ask is: how much of the representation theory of $U(\fg)$ can be similarly extended to develop the representation theory of $U^{[r]}(G)$? To that end, let us follow the path well-trodden in the $r=0$ case and see how many difficulties we discover in the generalisation. 

Suppose that $E$ is an irreducible $U^{[r]}(G)$-module, with $G$ reductive. It is finite-dimensional by Theorem \ref{findim}. Hence, by Schur's lemma, $\xi_r(\delta)\in Z_r(G)$ acts as a scalar on $E$ for each $\delta\in\Dipri$. By the semilinearity of $\xi_r$, we can deduce that there exists $\chi_E\in\Dipri^{*}$ (the vector space dual) such that 
$$\xi_r(\delta)\vert_E=\chi_E(\delta)^p\Id_E\qquad \mbox{for all} \,\, \delta\in\Dipri.$$
Note that $\chi_E(\delta)=0\iff \chi_E(\delta)^p=0\iff \xi_r(\delta)=0$. In particular, this means that $\chi_E(X_{p^r})=0$, where $X_{p^r}$ is defined as in \ref{sec5}. 

Recall from Proposition~\ref{Frob} and Corollary~\ref{surj} that $\Upsilon_{r,r}:U^{[r]}(G)\to U(\fg)$ is a surjective algebra homomorphism such that $\Upsilon_{r,r}(\Dipri)=\fg$. The linear map (in fact indexed algebra subspace homomorphism) $\Upsilon_{r,r}\vert_{\tiny\Dipri}\colon\Dipri\to\fg$ has kernel $X_{p^r}$ and hence $\chi_E$ passes to a linear map $\hat{\chi_E}\colon \fg\to \bK$. Similarly, given $\hat{\chi}\in\fg^{*}$ we can extend along $\Upsilon_{r,r}\vert_{\tiny\Dipri}$ to get a linear form $\chi:\Dipri\to\bK$. We shall abuse notation slightly in the following way: given $\chi\in\fg^{*}$, we shall also denote by $\chi$ the linear form $\Dipri\to\bK$ induced by $\Upsilon_{r,r}$.

This allows us to make the following definition for $\chi\in\fg^{*}$:
$$U^{[r]}_\chi(G)=\frac{U^{[r]}(G)}{\langle \xi_r(\delta)-\chi(\delta)^p\,\vert\,\delta\in\Dipri\rangle}.$$
We immediately get the following result:

\begin{prop}\label{irred}
	Every irreducible $U^{[r]}(G)$-module is a $U^{[r]}_\chi(G)$-module for some $\chi\in\fg^{*}$.
\end{prop}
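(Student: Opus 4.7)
The plan is essentially to assemble the observations made in the paragraphs immediately preceding the statement into a clean argument: once one knows that $\xi_r$ lands in the centre, is semilinear, and vanishes on $X_{p^r}$, the existence of a suitable $\chi\in\fg^*$ is almost automatic.

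First I would let $E$ be an irreducible $\Upr$-module. By Theorem~\ref{findim}, $E$ is finite-dimensional, so Schur's lemma applies. By Proposition~\ref{cent}, every element of the form $\xi_r(\delta)$ with $\delta\in\Dipri$ lies in $Z(\Upr)$, hence acts on $E$ as a scalar $c_E(\delta)\in\bK$. The semilinearity of $\xi_r$ (Lemma~\ref{semi}) forces $c_E$ to be $p$-semilinear in $\delta$, so there is a (genuinely linear) form $\chi_E\in\Dipri^{*}$ with $\xi_r(\delta)|_E=\chi_E(\delta)^p\,\Id_E$ for all $\delta\in\Dipri$.

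Next I would verify that $\chi_E$ vanishes on $X_{p^r}$. This is immediate from Proposition~\ref{zero}, which gives $\xi_r(X_{p^r})=0$, and hence $\chi_E(\delta)^p=0$ (so $\chi_E(\delta)=0$) for all $\delta\in X_{p^r}$. By Proposition~\ref{Frob} (at $r=s$) together with Corollary~\ref{surj}, the restriction $\Upsilon_{r,r}|_{\Dipri}\colon\Dipri\to\fg$ is surjective with kernel exactly $X_{p^r}$, so $\chi_E$ descends to a well-defined linear functional $\hat\chi_E\in\fg^{*}$ satisfying $\hat\chi_E\circ\Upsilon_{r,r}|_{\Dipri}=\chi_E$.

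Finally, set $\chi\coloneqq\hat\chi_E\in\fg^*$. With the notational convention stipulated just before the proposition, $\chi$ is extended back to a linear form on $\Dipri$ via $\Upsilon_{r,r}|_{\Dipri}$, and by construction this extension coincides with $\chi_E$. Therefore, for every $\delta\in\Dipri$, the element $\xi_r(\delta)-\chi(\delta)^p$ acts as zero on $E$. Hence the action of $\Upr$ on $E$ factors through the quotient $\Uprc$, so $E$ is a $\Uprc$-module, as required.

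There is really no serious obstacle here: every nontrivial ingredient (finite-dimensionality, centrality, semilinearity, and vanishing on $X_{p^r}$) has already been established. The only point that requires a moment's care is making sure the passage from $\chi_E\in\Dipri^{*}$ to $\hat\chi_E\in\fg^{*}$ is legitimate, i.e.\ that $\ker(\Upsilon_{r,r}|_{\Dipri})=X_{p^r}$, which is where Corollary~\ref{surj} and the description of $X_{p^r}$ are used; otherwise the argument is a short assembly of the preceding results.
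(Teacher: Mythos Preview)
Your proposal is correct and follows essentially the same approach as the paper: the paper presents precisely this chain of reasoning in the paragraphs immediately preceding the proposition (finite-dimensionality via Theorem~\ref{findim}, Schur's lemma plus Proposition~\ref{cent}, semilinearity via Lemma~\ref{semi} to produce $\chi_E$, vanishing on $X_{p^r}$ via Proposition~\ref{zero}, and descent to $\fg^{*}$ via $\Upsilon_{r,r}$), and then states the proposition as an immediate consequence. Your write-up simply makes this explicit and is entirely in line with the paper's argument.
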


It is straightforward to show that as a vector space over $\bK$ this algebra has dimension $p^{(r+1)\dim(\fg)}$ with basis the classes of 
$$\{\prod_{\alpha\in\Phi^+}\ve_{\alpha}^{\llbracket i_\alpha\rrbracket}\prod_{t=1}^d\binom{\vh_{t}}{\llbracket k_t\rrbracket}\prod_{\alpha\in\Phi^+}\ve_{-\alpha}^{\llbracket j_\alpha\rrbracket }\quad ;\quad 0\leq i_\alpha,j_\alpha,k_t<p^{r+1}\}$$
in $U^{[r]}_\chi(G)$. At times, it will also be beneficial to consider another basis of this algebra, which can be derived easily from properties of divided powers. This basis consists of the classes of
$$\{\prod_{\alpha\in\Phi^+}\ve_{\alpha}^{(i_\alpha)}\prod_{t=1}^d\binom{\vh_{t}}{k_t}\prod_{\alpha\in\Phi^+}\ve_{-\alpha}^{(j_\alpha) }\quad ;\quad 0\leq i_\alpha,j_\alpha,k_t<p^{r+1}\}$$
in $U_\chi^{[r]}(G)$.

Using this basis, and the fact that in $\Di(G_{(r+1)})$ we have $(\ve_\alpha^{(p^r)})^{p}=0$ and $\binom{\vh_t}{p^r}^p=\binom{\vh_t}{p^r}$, it is straightforward to show that $U_0^{[r]}(G)=\Di(G_{(r+1)})$. One can also show that, for $\chi\in\fg^{*}$ and $s\leq r$, we get that $\Upsilon_{r,r-s}\colon U^{[r]}_\chi(G)\to U^{[s]}_\chi(G)$ is a well-defined algebra homomorphism. So we get the sequence of algebra homomorphisms
$$U_\chi^{[r]}(G)\twoheadrightarrow U^{[r-1]}_\chi(G)\twoheadrightarrow\ldots\twoheadrightarrow U^{[1]}_\chi(G)\twoheadrightarrow U_\chi(\fg).$$

Given $g\in G$ we get an adjoint action of $g$, $\Ad(g)$, on $\Dipri$. This leads to a coadjoint action of $g$ on $\Dipri^{*}$. We furthermore have a twisted coadjoint action of $g$ on $(\fg^{*})^{[r]}$, corresponding to the twisted adjoint action $\Ad(F_g^r(g))$. 

\begin{lemma}\label{orbit}
	Given $(\chi\in\fg^{*})^{[r]}$ and $g\in G$, there is an isomorphism $U^{[r]}_\chi(G)\cong U^{[r]}_{g\cdot\chi}(G)$.
\end{lemma}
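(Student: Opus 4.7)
The plan is to construct a $\bK$-algebra automorphism $\widetilde{\Ad}(g)$ of $U^{[r]}(G)$ induced by the adjoint action of $g$ on $\Di(G)$, and then show it descends to a well-defined isomorphism between the quotients $U^{[r]}_\chi(G)$ and $U^{[r]}_{g\cdot\chi}(G)$ by sending one defining ideal onto the other. The inverse will come from applying the same construction to $g^{-1}$.

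First, $\Ad(g): \Di(G) \to \Di(G)$ is a filtration-preserving Hopf algebra automorphism, so its restriction $\Ad(g)\vert_{\Dipri}$ is an indexed algebra subspace homomorphism into $\Dipri \hookrightarrow U^{[r]}(G)$. By the universal property (Proposition~\ref{univ}) this lifts to an algebra homomorphism $\widetilde{\Ad}(g)\colon U^{[r]}(G)\to U^{[r]}(G)$; applying the same construction to $\Ad(g^{-1})$ and using uniqueness shows that these two lifts are mutually inverse, so $\widetilde{\Ad}(g)$ is an automorphism. Next, I compute its effect on the central generators. For $\delta\in\Di^+_{p^r}(G)$, since $\widetilde{\Ad}(g)$ is an algebra map extending $\Ad(g)$ and $\delta^p\in\Di^+_{p^{r+1}-1}(G)\subset U^{[r]}(G)$ by Lemma~\ref{powers}, one gets
$$\widetilde{\Ad}(g)(\xi_r(\delta))=\Ad(g)(\delta)^{\otimes p}-\Ad(g)(\delta)^p=\xi_r(\Ad(g)(\delta)).$$

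The essential input, and the one place the Frobenius twist enters, is the identity
$$\Upsilon_{r,r}\circ\widetilde{\Ad}(g)=\Ad(F_g^r(g))\circ\Upsilon_{r,r}$$
on the image of $\Dipri$. This follows from the fact that $F_g^r$ is a group homomorphism, so $F_g^r\circ\mathrm{conj}_g=\mathrm{conj}_{F_g^r(g)}\circ F_g^r$ as endomorphisms of $G$; applying the distribution functor gives $\Xi_r\circ\Ad(g)=\Ad(F_g^r(g))\circ\Xi_r$ on $\Di(G)$, and restricting to the relevant filtration level gives the claim for $\Upsilon_{r,r}$. Now for any $\delta\in\Dipri$, recalling that the linear form $\chi\colon\Dipri\to\bK$ is by definition $\chi\circ\Upsilon_{r,r}$ and that the twisted coadjoint action on $(\fg^*)^{[r]}$ is $(g\cdot\chi)(x)=\chi(\Ad(F_g^r(g)^{-1})(x))$, this yields
$$\chi\bigl(\Ad(g^{-1})(\delta)\bigr)=\chi\bigl(\Ad(F_g^r(g)^{-1})(\Upsilon_{r,r}(\delta))\bigr)=(g\cdot\chi)(\delta).$$

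Combining these, $\widetilde{\Ad}(g)$ sends a defining relation $\xi_r(\delta)-\chi(\delta)^p$ to $\xi_r(\delta')-\chi(\Ad(g^{-1})(\delta'))^p=\xi_r(\delta')-(g\cdot\chi)(\delta')^p$ where $\delta'=\Ad(g)(\delta)$ ranges over $\Dipri$. Hence $\widetilde{\Ad}(g)$ carries the ideal defining $U^{[r]}_\chi(G)$ onto the ideal defining $U^{[r]}_{g\cdot\chi}(G)$, and descends to the required isomorphism of quotients. The main obstacle I anticipate is simply keeping the Frobenius twist bookkeeping straight in the passage between $\Dipri$ and $\fg^*$: the isomorphism only takes the clean form stated because the intrinsic twist in $\Upsilon_{r,r}$ precisely absorbs the need to use the twisted coadjoint action on $(\fg^*)^{[r]}$ rather than the ordinary coadjoint action on $\fg^*$.
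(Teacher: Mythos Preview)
Your proof is correct and follows essentially the same approach as the paper: both arguments hinge on the $G$-equivariance of $\Upsilon_{r,r}$ (Lemma~\ref{equiv}), which is precisely your identity $\Upsilon_{r,r}\circ\widetilde{\Ad}(g)=\Ad(F_g^r(g))\circ\Upsilon_{r,r}$, to reconcile the untwisted coadjoint action on $\Dipri^{*}$ with the twisted coadjoint action on $(\fg^{*})^{[r]}$. The paper's version is terse (it constructs the adjoint automorphism and checks the ideal correspondence ``by inspection''), whereas you have spelled out explicitly the lift $\widetilde{\Ad}(g)$ via the universal property, its effect on $\xi_r$, and the bookkeeping verifying that the defining ideal of $U^{[r]}_\chi(G)$ is carried onto that of $U^{[r]}_{g\cdot\chi}(G)$.
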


\begin{proof}
	
	Consider the coadjoint actions of $G$ on $\Dipri^{*}$ and on $\fg^{*}$ (untwisted and twisted respectively). A priori, the actions need not be compatible when we switch between considering $(\chi\in \fg^{*})^{[r]}$ as a linear form on $\fg$ and a linear form on $\Dipri$. However, the $G$-equivariance of $\Upsilon_{r,r}$ (see Lemma~\ref{equiv}) means that this is not a problem - the actions are compatible.
	
	As a result, one can show by inspection that  
	$$U^{[r]}_\chi(G)\cong U^{[r]}_{g\cdot\chi}(G)$$
	where we mean by $g\cdot\chi$ the twisted coadjoint action of $g$ on $\chi$ - by Section~\ref{sec4.1}, it doesn't matter here if we consider the action of $g$ on $(\chi\in \fg^{*})^{[r]}$ or $\chi\in \Dipri^{*}$.
\end{proof}
In particular, much like in the $r=0$ case, to understand the representation theory of $U^{[r]}(G)$ it is enough to understand the representation theory of $U_\chi^{[r]}(G)$ for $(\chi\in \fg^{*})^{[r]}$ in distinct $G$-orbits.

\subsection{Frobenius Kernels}
\label{sec6.2}
We would now like to show that $\Di(G_{(r)})$ is a subalgebra of $\Uprc$ for any choice of $\chi\in\fg^{*}$. We saw earlier that 
$$\Di(G_{(r+1)})\cong\frac{\Upr}{\langle \delta^{\otimes p}-\delta^p\,\vert\,\delta\in\Dipri\rangle},$$
so it is enough to show that 
$$\Di(G_{(r)})\cong\frac{U^{[r-1]}(G)}{\langle \delta^{\otimes p}-\delta^p\,\vert\,\delta\in\Di^+_{p^{r-1}}(G)\rangle}\hookrightarrow \frac{\Upr}{\langle \delta^{\otimes p}-\delta^p-\chi(\delta)^p1\,\vert\,\delta\in\Dipri\rangle}.$$

Inclusion gives us a map $i\colon \Di^{+}_{p^{r}-1}(G)\hookrightarrow \Di^{+}_{p^{r+1}-1}(G)\hookrightarrow\Upr$ which clearly satisfies all the conditions for the universal property, so we get an algebra homomorphism 
$$\overline{i}\colon U^{[r-1]}(G)\rightarrow \Upr\twoheadrightarrow \Uprc.$$
It is straightforward to see from the basis description of $\Upr$ that $\Img(\overline{i})\cap \langle \delta^{\otimes p}-\delta^p-\chi(\delta)^p1\,\vert\,\delta\in\Dipri\rangle=0$, so we just need to show that $\ker(\overline{i})=\langle \delta^{\otimes p}-\delta^p\,\vert\,\delta\in\Di^+_{p^{r-1}}(G)\rangle$. This follows easily from the basis descriptions of $U^{[r-1]}(G)$ and $U^{[r]}(G)$ once we notice that $\overline{i}(\ve_\alpha^{(p^{r-1})^p})=0$ and  $\overline{i}(\binom{\vh_t}{p^{r-1}}^p)=\binom{\vh_t}{p^{r-1}}$.

In particular, we have the following diagram of inclusions and projections:
$$
\xymatrix{
	\ldots \ar@{->>}[drr]^{} & & U^{[r-1]}(G) \ar@{->>}[drr]^{} \ar@{<-^{)}}[d]^{} & & U^{[r]}(G) \ar@{->>}[drr]^{} \ar@{<-^{)}}[d]^{} & & U^{[r+1]}(G) \ar@{->>}[drr]^{} \ar@{<-^{)}}[d]^{}\\
	\ldots\ar@{^{(}->}[rr]^{} & & \Di(G_{(r-1)}) \ar@{^{(}->}[rr]^{} & & \Di(G_{(r)}) \ar@{^{(}->}[rr]^{} & & \Di(G_{(r+1)})\ar@{^{(}->}[rr]^{} & & \ldots\\
}
$$
This hence provides us with a direct system $\ldots\to U^{[r-1]}(G)\to U^{[r]}(G)\to U^{[r+1]}(G)\to\ldots$ with direct limit $\varinjlim U^{[r]}(G)=\Di(G)$.
From what we have already shown, we can use this to deduce some details of the module theory of $U_\chi^{[r]}(G)$. 

\begin{prop}\label{DistGr}
	Every $U^{[r]}_\chi(G)$-module is a $\Di(G_{(s)})$-module for all $0\leq s\leq r$.
\end{prop}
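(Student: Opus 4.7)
The plan is to assemble the proposition as a direct bookkeeping consequence of the diagram of inclusions and projections displayed immediately before the statement. All the real work has been done in the preceding paragraph, where an injective algebra homomorphism $\Di(G_{(r)}) \hookrightarrow U^{[r]}_\chi(G)$ was produced (using the universal property together with a kernel computation based on the basis description of $\Upr$). Restriction of scalars along this subalgebra embedding already settles the case $s = r$, so it remains only to extend down to $s < r$.

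For this, I would invoke the standard chain of Hopf subalgebra inclusions $\bK = \Di(G_{(0)}) \hookrightarrow \Di(G_{(1)}) \hookrightarrow \cdots \hookrightarrow \Di(G_{(r)})$ coming from the closed embeddings $G_{(s)} \hookrightarrow G_{(s+1)}$ of Frobenius kernels \cite{Jan}; these are precisely the horizontal arrows in the bottom row of the displayed diagram. Composing with $\Di(G_{(r)}) \hookrightarrow U^{[r]}_\chi(G)$ yields a subalgebra inclusion $\Di(G_{(s)}) \hookrightarrow U^{[r]}_\chi(G)$ for every $0 \leq s \leq r$, and then any $U^{[r]}_\chi(G)$-module restricts to a $\Di(G_{(s)})$-module along this inclusion.

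There is no real obstacle here; the proposition is almost a corollary of the diagram. The only point that would deserve a sentence of explicit comment is why this works uniformly in $\chi$: the image of the composite $U^{[s-1]}(G) \to U^{[r]}(G) \twoheadrightarrow U^{[r]}_\chi(G)$ lives in a portion of $\Upr$ built from $\Di^{+}_{p^s-1}(G)$, and since $p^{s-1} < p^r$ for $s \leq r$ we have $\Upsilon_{r,r}(\Di^{+}_{p^{s-1}}(G)) = 0$ by Proposition~\ref{Frob}, so $\chi$ vanishes on this subspace and the quotient relation $\delta^{\otimes p} - \delta^p = \chi(\delta)^p\cdot 1$ reduces to the Frobenius-kernel relation $\delta^{\otimes p} = \delta^p$ there. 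This observation makes the composite factor cleanly through $\Di(G_{(s)})$, independently of $\chi$.
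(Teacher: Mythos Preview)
Your proposal is correct and matches the paper's approach: the paper states this proposition without proof, as an immediate consequence of the embedding $\Di(G_{(r)})\hookrightarrow U^{[r]}_\chi(G)$ established in the preceding paragraph together with the standard chain of inclusions $\Di(G_{(s)})\hookrightarrow\Di(G_{(r)})$ for $s\leq r$, exactly as in your first two paragraphs. Your third paragraph is unnecessary (and a little tangled, mixing $U^{[s-1]}(G)$ into an argument about $\Di(G_{(s)})$): the uniformity in $\chi$ is already contained in the paper's proof that $\Di(G_{(r)})\hookrightarrow U^{[r]}_\chi(G)$ for arbitrary $\chi$, so nothing further needs to be checked once you compose with the Frobenius-kernel inclusions.
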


\begin{prop}
	Every $U^{[s]}_\chi(G)$-module can be lifted to a $U_\chi^{[r]}(G)$-module via $\Upsilon_{r,r-s}$.
\end{prop}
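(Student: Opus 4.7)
The plan is to deduce this directly from the earlier remark that $\Upsilon_{r,r-s}\colon U^{[r]}_\chi(G)\to U^{[s]}_\chi(G)$ is a well-defined algebra homomorphism, because once such a map is in hand, any $U^{[s]}_\chi(G)$-module $M$ can be pulled back along it to produce a $U^{[r]}_\chi(G)$-module structure on the same underlying vector space. The one-line proof should therefore be: the action is defined by $u\cdot m\coloneqq \Upsilon_{r,r-s}(u)\cdot m$ for $u\in U^{[r]}_\chi(G)$ and $m\in M$, with the module axioms following because $\Upsilon_{r,r-s}$ is an algebra homomorphism.

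The substantive work is really the verification of well-definedness of $\Upsilon_{r,r-s}$ on the quotients, so I would include that as part of the proof. First I would recall from Proposition~\ref{Frob} and Corollary~\ref{surj} that $\Upsilon_{r,r-s}\colon U^{[r]}(G)\to U^{[s]}(G)$ is a surjective algebra homomorphism extending the Hopf algebra map $\Xi_{r-s}\colon\Di(G)\to\Di(G)$ arising from the $(r-s)$-th geometric Frobenius. Then I would check that each generator $\xi_r(\delta)-\chi(\delta)^p\cdot 1$ of the defining ideal of $U^{[r]}_\chi(G)$ (with $\delta\in\Di^+_{p^r}(G)$) is sent into the defining ideal of $U^{[s]}_\chi(G)$. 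Since $\Upsilon_{r,r-s}$ is an algebra map and $\xi_r(\delta)=\delta^{\otimes p}-\delta^p$, one has
$$\Upsilon_{r,r-s}(\xi_r(\delta))=\Xi_{r-s}(\delta)^{\otimes p}-\Xi_{r-s}(\delta)^p=\xi_s(\Xi_{r-s}(\delta)),$$
and $\Xi_{r-s}(\delta)\in\Di^+_{p^s}(G)$ by the bound recorded in the proof of Proposition~\ref{Frob}.

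For the scalar matching, I would invoke the paper's convention that $\chi$ on $\Di^+_{p^k}(G)$ means $\chi\circ\Xi_k$, combined with the compositional property $F_g^r=F_g^s\circ F_g^{r-s}$ of the geometric Frobenius, whence $\Xi_r=\Xi_s\circ\Xi_{r-s}$. Hence $\chi(\delta)=\chi(\Xi_{r-s}(\delta))$ for $\delta\in\Di^+_{p^r}(G)$, which yields
$$\Upsilon_{r,r-s}(\xi_r(\delta)-\chi(\delta)^p)=\xi_s(\Xi_{r-s}(\delta))-\chi(\Xi_{r-s}(\delta))^p,$$
an element of the ideal defining $U^{[s]}_\chi(G)$. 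This establishes the descent, and then the pullback construction described above gives the proposition.

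There is no real obstacle here: the only mildly non-trivial point is the scalar compatibility $\chi\circ\Xi_r=\chi\circ\Xi_s\circ\Xi_{r-s}$, but this is immediate from the functoriality of Frobenius. The result can reasonably be presented as a short corollary of the well-definedness of $\Upsilon_{r,r-s}$ on the deformation algebras.
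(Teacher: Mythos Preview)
Your proposal is correct and matches the paper's approach exactly: the paper states without proof that $\Upsilon_{r,r-s}\colon U^{[r]}_\chi(G)\to U^{[s]}_\chi(G)$ is a well-defined algebra homomorphism, and then records this proposition as an immediate consequence. You have simply supplied the details of the well-definedness check (the image of $\xi_r(\delta)-\chi(\delta)^p$ under $\Upsilon_{r,r-s}$ and the compatibility $\Xi_r=\Xi_s\circ\Xi_{r-s}$), which the paper leaves to the reader.
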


We can put these two results together in the following theorem. The proof follows easily from Section~\ref{sec4.1}.

\begin{prop}
	Let $M$ be a $U_\chi^{[r]}(G)$-module. If $M$ is lifted from a $U^{[s]}_\chi(G)$-module along $\Upsilon_{r,r-s}$ then $\Di^+(G_{(s)})M=0$. On the other hand, if $\Di^+(G_{(s)})M=0$, then $M$ is a $U_\chi^{[s]}(G)$-module via a lifting along $\Upsilon_{r,s}$.
\end{prop}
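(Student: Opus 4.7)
The plan is to reduce both implications to one structural fact: inside $U^{[r]}_\chi(G)$, the two-sided ideal generated by the subspace $\Di^+(G_{(s)})$---embedded via the chain $\Di(G_{(s)})\hookrightarrow\Di(G_{(r)})\hookrightarrow U^{[r]}_\chi(G)$ set up at the start of Section~\ref{sec6.2}---coincides with the kernel of the surjection $\Upsilon_{r,s}\colon U^{[r]}_\chi(G)\twoheadrightarrow U^{[r-s]}_\chi(G)$. With this identification in hand, ``$M$ lifts along $\Upsilon_{r,s}$'' is the same as ``$\ker(\Upsilon_{r,s})$ annihilates $M$'', which is in turn the same as ``$\Di^+(G_{(s)})M=0$'', since a generating set of an ideal annihilates a module if and only if the whole ideal does.

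To establish the kernel identification, I first check that $\Upsilon_{r,s}\colon U^{[r]}(G)\to U^{[r-s]}(G)$ really descends to the $\chi$-deformations; this boils down to the Frobenius-twist semigroup law $\Xi_r=\Xi_{r-s}\circ\Xi_s$ together with the construction of the extension $\chi\colon\Di^+_{p^{r+1}-1}(G)\to\bK$ via $\Upsilon_{r,r}$. Next, using the divided-powers basis of Proposition~\ref{basis} together with the explicit formulas $\Upsilon_{r,s}(\ve_\alpha^{(p^k)})=\ve_\alpha^{(p^{k-s})}$ and $\Upsilon_{r,s}(\binom{\vh_t}{p^k})=\binom{\vh_t}{p^{k-s}}$ for $k\geq s$ (both vanishing for $k<s$, because $\Xi_s$ kills $\Di^+_m(G)$ whenever $m<p^s$, as observed in the proof of Proposition~\ref{Frob}), I compute the action of $\Upsilon_{r,s}$ on a typical $\llbracket\cdot\rrbracket$-basis element: it maps to zero precisely when its factorisation contains at least one $\ve_{\pm\alpha}^{(p^k)}$ or $\binom{\vh_t}{p^k}$ with $k<s$, and the surviving basis elements (those whose index tuples are all divisible by $p^s$) are sent bijectively onto the $\llbracket\cdot\rrbracket$-basis of $U^{[r-s]}_\chi(G)$. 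Each killed basis element therefore carries a factor in $\Di^+(G_{(s)})$ and so lies in $\langle\Di^+(G_{(s)})\rangle$; the reverse containment is clear from $\Upsilon_{r,s}(\Di^+(G_{(s)}))=0$; and a matching codimension count ($p^{(r-s+1)\dim\fg}$ on both sides) forces equality.

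With the kernel identification proved, the two implications drop out at once. If $M$ is pulled back from a $U^{[r-s]}_\chi(G)$-module along $\Upsilon_{r,s}$, then $\ker(\Upsilon_{r,s})$ annihilates $M$ by construction, and in particular so does $\Di^+(G_{(s)})$. Conversely, if $\Di^+(G_{(s)})M=0$, then for any $u,v\in U^{[r]}_\chi(G)$, $x\in\Di^+(G_{(s)})$ and $m\in M$ we have $(uxv)\cdot m=u\cdot(x\cdot(v\cdot m))=0$, so the entire two-sided ideal $\langle\Di^+(G_{(s)})\rangle=\ker(\Upsilon_{r,s})$ annihilates $M$; the $U^{[r]}_\chi(G)$-action descends to the quotient $U^{[r]}_\chi(G)/\ker(\Upsilon_{r,s})\cong U^{[r-s]}_\chi(G)$, exhibiting $M$ as a lift along $\Upsilon_{r,s}$ as required. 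The main obstacle is the reverse containment $\ker(\Upsilon_{r,s})\subseteq\langle\Di^+(G_{(s)})\rangle$ in the kernel identification: abstractly it is not obvious that no extra relations in the $\chi$-deformation conspire to make $\langle\Di^+(G_{(s)})\rangle$ strictly smaller than $\ker(\Upsilon_{r,s})$, which is precisely why the explicit basis-level codimension count rather than an abstract ideal-theoretic argument is the most reliable route.
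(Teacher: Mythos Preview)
Your proof is correct and is precisely what the paper means by ``follows easily from Section~\ref{sec4.1}'': you identify $\ker(\Upsilon_{r,s})$ inside $U^{[r]}_\chi(G)$ with the two-sided ideal generated by $\Di^+(G_{(s)})$ via the explicit effect of $\Xi_s$ on the divided-power generators, after which both implications are formal. Note that the statement as printed carries slightly inconsistent indices ($\Upsilon_{r,r-s}$ in the first clause versus $\Upsilon_{r,s}$ in the second, while keeping $U^{[s]}_\chi(G)$ and $\Di^+(G_{(s)})$ throughout); your consistent use of $\Upsilon_{r,s}$, $U^{[r-s]}_\chi(G)$ and $\Di^+(G_{(s)})$ is the coherent reading and matches the surrounding material.
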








\subsection{Examples}
\label{sec6.3}
\begin{exa}
	Consider the additive algebraic group $G=\bG_a$. We know from \cite[I.7.8]{Jan} that $\Di_{p^{r+1}-1}(G)$ has basis $\gamma_1,\gamma_2,\dots,\gamma_{p^{r+1}-1}$ and that in $\Di(G)$ the multiplication is $\gamma_k\gamma_l=\binom{k+l}{k}\gamma_{k+l}$. Using these facts one can show that
	$$U^{[r]}(\bG_a)=\frac{\bK[t_0,t_1,\ldots,t_r]}{\langle t_i^p\,\vert\,0\leq i\leq {r-1}\,\rangle}.$$
	Furthermore, given $\chi\in\fg^{*}=\bK$, we get 
	$$U^{[r]}_\chi(\bG_a)=\frac{\bK[t_0,t_1,\ldots,t_r]}{\langle t_{r}^p-\chi^p ; t_i^p\,\vert\,0\leq i\leq {r-1}\,\rangle}\cong \frac{\bK[t]}{\langle t^p\rangle}\times \ldots\times \frac{\bK[t]}{\langle t^p\rangle}\times \frac{\bK[t]}{\langle t^p-\chi^p\rangle}.$$
\end{exa}

\begin{exa}\label{multi}
	Consider the multiplicative algebraic group $G=\bG_m$. We know from \cite[I.7.8]{Jan} that $\Di_{p^{r+1}-1}(G)$ has basis $\delta_1,\delta_2,\dots,\delta_{p^{r+1}-1}$ and that in $\Di(G)$ the multiplication is $\delta_k\delta_l=\sum_{i=0}^{\min(k,l)}\frac{(k+l-i)!}{(k-i)!(l-i)!i!}\delta_{k+l-i}$. Using these facts one can show that
	$$U^{[r]}(\bG_m)=\frac{\bK[t_0,t_1,\ldots,t_r]}{\langle t_i^p-t_i\,\vert\,0\leq i\leq {r-1}\,\rangle}.$$
	Furthermore, given $\chi\in\fg^{*}=\bK$, we get 
	$$U^{[r]}_\chi(\bG_m)=\frac{\bK[t_0,t_1,\ldots,t_r]}{\langle t_{r}^p-t_r-\chi^p ; t_i^p-t_i\,\vert\,0\leq i\leq {r-1}\,\rangle}\cong \bK\times\ldots\times\bK$$
	where there are $rp$ copies of $\bK$ in the final expression, since $t_i^p-t_i$ and $t_r^p-t_r-\chi^p$ are separable polynomials. This tells us that the algebra $U^{[r]}_\chi(\bG_m)$ is semisimple.
\end{exa}

\subsection{Higher Baby Verma Modules}
\label{sec6.4}
One of the main constructions which we use to study $U_\chi(\fg)$-modules is that of baby Verma modules. We would like to construct a similar module for this higher case. We shall assume that there exists a $G$-invariant non-degenerate bilinear form on $\fg$, so that as in \cite{Jan2} we can also assume that $\chi(\fn^+)=0$. One of the benefits of the work we have done so far is that assumptions like these are nothing new - any conditions on $\chi$ hold independently of the power of $p$ we are working with, except that the $G$-action is twisted by the corresponding geometric Frobenius. In particular, this is a reasonable assumption for exactly the same reasons as in the standard case. This assumption in this case also tells us that, when $\chi$ is viewed as a linear form on $\Dipri$, we have that $\chi(\Di_{p^r}^+(U))=0$.

Let $0\neq M$ be a $U_\chi^{[r]}(B)$-module. We have that $\chi(\ve_\alpha^{(k)})=0$ for all $\alpha\in\Phi$ and $0<k\leq p^r$ and that $\xi_r(\ve_\alpha^{(k)})=(\ve_\alpha^{(k)})^{\otimes p}$. This means that every $\ve_\alpha^{(k)}$ acts nilpotently on $M$. As a result, using an argument similar to that of Rudakov in \cite{Rud} we get that
$$\{m\in M\,\vert\,\Di_{p^{r+1}-1}^+(U)m=0\}\neq 0.$$

Let us consider the action of $\Di_{p^{r+1}-1}^+(T)$ on this set. Since $\Di_{p^{r+1}-1}^+(T)$ is commutative as an indexed algebra subspace (i.e. $\delta\mu=\mu\delta$ whenever $\delta\in \Di_{i}^+(T)$ and $\mu\in\Di_{j}^+(T)$ with $i+j<p^{r+1}$) and $U_\chi^{[r]}(T)$ is semi-simple by Example \ref{multi} there exists $0\neq m_0\in M$ such that $\Di_{p^{r+1}-1}^+(U)m=0$ and there exists $\lambda\in\Di_{p^{r+1}-1}^+(T)^{\overline{*}}$ (the indexed algebra subspace dual) such that, for each $\delta\in \Di_{p^{r+1}-1}^+(T)$, $\delta m_0=\lambda(\delta)m_0$.

Now, given $\lambda\in \Di_{p^{r+1}-1}^+(T)^{\overline{*}}$, we can define the one-dimensional $\Di_{p^{r+1}-1}^+(B)$-module $\bK_\lambda$ where $\Di_{p^{r+1}-1}^+(U)$ acts as zero and $\delta\in\Di_{p^{r+1}-1}^+(T)$ acts as multiplication by $\lambda(\delta)$. This $\Di_{p^{r+1}-1}^+(B)$-module will give a $U^{[r]}_\chi(B)$-module if and only if $\lambda\in\Lambda^r_\chi$ where 
$$\Lambda^r_\chi\coloneqq\{\lambda \in\Di_{p^{r+1}-1}^+(T)^{\overline{*}}\,\vert\,\lambda(\delta)^p-\lambda(\delta^p)=\chi(\delta)^p\,\mbox{for all}\,\delta\in \Di_{p^r}^+(T)\}.$$
Note that for $\delta\in \Di_{p^r}^+(T)$ we in fact have that $\delta^p=\delta$, so the criterion can also be written as $\lambda(\delta)^p-\lambda(\delta)=\chi(\delta)^p$ for all $\delta\in \Di_{p^r}^+(T)$. A necessary and sufficient condition for $\lambda\in \Di_{p^{r+1}-1}^+(T)^{\overline{*}}$ to lie inside $\Lambda^r_\chi$ is hence that $\lambda(\binom{\vh_t}{p^k})^p-\lambda(\binom{\vh_t}{p^k})=0$ for $1\leq k<r$ and $\lambda(\binom{\vh_t}{p^r})^p-\lambda(\binom{\vh_t}{p^r})=\chi(\binom{\vh_t}{p^r})^p$, for all $1\leq t\leq d$.

Given $\lambda\in\Lambda^r_\chi$ we can hence define the \emph{higher baby Verma module}:
$$Z_\chi^r(\lambda)=U^{[r]}_\chi(G)\otimes_{U^{[r]}_\chi(B)} \bK_\lambda.$$
Letting $v_\lambda=1\otimes 1$, we get that a basis of $Z^r_\chi(\lambda)$ is 
$$\{\prod_{\alpha\in\Phi^{+}}\ve_{-\alpha}^{(i_\alpha)}v_\lambda\quad :\quad 0\leq i_\alpha<p^{r+1}\}$$
and thus that $Z^r_\chi(\lambda)$ has dimension $p^{(r+1)\vert\Phi^{+}\vert}$.

As in the $r=0$ case, Frobenius reciprocity gives the following lemma:

\begin{lemma}\label{Verma}
	Every irreducible $U_\chi^{[r]}(G)$-module is a homomorphic image of $Z^r_\chi(\lambda)$ for some $\lambda\in \Lambda^r_\chi$.
\end{lemma}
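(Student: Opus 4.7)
The plan is to mirror the classical $r=0$ Frobenius reciprocity argument. Let $M$ be an irreducible $U_\chi^{[r]}(G)$-module, and restrict it along the inclusion $U_\chi^{[r]}(B)\hookrightarrow U_\chi^{[r]}(G)$ to obtain a nonzero $U_\chi^{[r]}(B)$-module. The discussion in the paragraphs immediately preceding the lemma, applied to this restriction, produces a nonzero $m_0\in M$ which is annihilated by $\Di_{p^{r+1}-1}^+(U)$ and on which $\Di_{p^{r+1}-1}^+(T)$ acts through some character $\lambda\in\Di_{p^{r+1}-1}^+(T)^{\overline{*}}$. The one-dimensional subspace $\bK m_0\subset M$ is then a $U_\chi^{[r]}(B)$-submodule isomorphic, as a $\Di_{p^{r+1}-1}^+(B)$-indexed module, to $\bK_\lambda$; hence the criterion recalled just above the statement of the lemma forces $\lambda\in\Lambda_\chi^r$.

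Next, the assignment $1\mapsto m_0$ is a $U_\chi^{[r]}(B)$-module homomorphism $\bK_\lambda\to M$. Applying the tensor--hom adjunction for the inclusion $U_\chi^{[r]}(B)\hookrightarrow U_\chi^{[r]}(G)$ then produces a $U_\chi^{[r]}(G)$-module homomorphism
$$\varphi\colon Z_\chi^r(\lambda)=U_\chi^{[r]}(G)\otimes_{U_\chi^{[r]}(B)}\bK_\lambda\longrightarrow M,\qquad v_\lambda\mapsto m_0.$$
Since $\varphi(v_\lambda)=m_0\neq 0$, the map $\varphi$ is nonzero, and the irreducibility of $M$ forces $\varphi$ to be surjective. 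Hence $M\cong Z_\chi^r(\lambda)/\ker\varphi$, which is what we wanted.

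The only step demanding even a moment's care is the invocation of Frobenius reciprocity, which presupposes that $Z_\chi^r(\lambda)$ really is the value of the left adjoint to restriction at $\bK_\lambda$, i.e.\ that $U_\chi^{[r]}(G)$ is free as a right $U_\chi^{[r]}(B)$-module. This is implicit in the explicit PBW-type basis $\{\prod_{\alpha\in\Phi^+}\ve_{-\alpha}^{(i_\alpha)}v_\lambda:0\leq i_\alpha<p^{r+1}\}$ of $Z_\chi^r(\lambda)$ recorded just above the lemma, which exhibits the induced module as a free rank-$p^{(r+1)|\Phi^+|}$ module on $v_\lambda$ over the negative-unipotent factor; so no genuinely new difficulty arises beyond what the $r=0$ proof already contains.
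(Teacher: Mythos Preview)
Your argument is correct and is exactly the Frobenius reciprocity argument the paper invokes (the paper's own proof consists of the single sentence ``As in the $r=0$ case, Frobenius reciprocity gives the following lemma''). One small remark: the tensor--hom adjunction $\Hom_{U_\chi^{[r]}(G)}(U_\chi^{[r]}(G)\otimes_{U_\chi^{[r]}(B)}\bK_\lambda,M)\cong\Hom_{U_\chi^{[r]}(B)}(\bK_\lambda,M)$ holds for any ring extension and does not require freeness, so your final paragraph is unnecessary.
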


Observe that when $r=0$, we just get baby Verma modules as in the existing theory.

\section{Special Linear Group}
\label{sec7}
Let us examine the particular case of the algebraic group $G=SL_2$, and try to understand the module theory of $U_\chi^{[r]}(SL_2)$ for $(\chi\in\fs\fl_2^{*})^{[r]}$. We assume $p>2$ in this section. Recall from Lemma~\ref{orbit} that to understand the irreducible modules of $U^{[r]}(SL_2)$ it is enough to understand the irreducible $U^{[r]}_\chi(SL_2)$-modules up to the $G$-orbit of $\chi$ under the twisted coadjoint action.  

We observe that the $G$-orbits of $(\fs\fl_2^{*})^{[r]}$ are the same as the $G$-orbits of $\fs\fl_2^{*}$. This follows from Proposition I.9.5 in \cite{Jan}, since $SL_2^{(r)}$ and $SL_2$ are isomorphic through the arithmetic Frobenius homomorphism. It is well-known (see, for example, Section 5.4 in \cite{Jan2}) that each element of $\fs\fl_2^{*}$ is conjugate under the $SL_2$-action to a linear form of one of the following types:
$$\ve\mapsto 0\,\qquad\quad \vf\mapsto 0\,\qquad\quad  \vh\mapsto t-s,$$
$$\ve\mapsto 0\,\qquad\quad  \vf\mapsto 1\,\qquad\quad \vh\mapsto 0,$$
where $t,s\in\bK$ and we are using the standard notation of $\ve,\vh,\vf\in\fs\fl_2$ to mean

$$\ve=\begin{pmatrix}
0 & 1\\
0 & 0\\
\end{pmatrix},
\quad \vh=\begin{pmatrix}
1 & 0\\
0 & -1\\
\end{pmatrix},
\quad \vf=\begin{pmatrix}
0 & 0\\
1 & 0\\
\end{pmatrix}.
$$

A linear form conjugate to the first type is called {\em semisimple}, and a linear form conjugate to the second type (or 0) is called {\em nilpotent}. From now on we shall assume that $\chi$ takes one of the above forms. In the rest of this chapter we shall classify the irreducible $\Uprc$-modules for $\chi$ non-zero semisimple, $\chi$ non-zero nilpotent, and $\chi=0$ in Subsections~\ref{sec7.2}, \ref{sec7.3} and \ref{sec7.4} respectively.

From the above discussion, we see that given $\chi\in\fs\fl_2^{*}$ and $\lambda\in\Lambda^r_\chi$ we can form the higher baby Verma module $Z^r_\chi(\lambda)$. In the case of $SL_2$, this module has basis $\{v_i\,\vert\, 0\leq i<p^{r+1}\}$, where we denote $v_i\coloneqq f^{(i)}\otimes m_0$ - here $m_0$ is a generator of $\bK_\lambda$. With a little work, we can write down how generators of $U_\chi^{[r]}(G)$ act on the basis:

$$\ve^{(p^j)}v_i=\twopartdef{0}{p^j>i,}{(\sum_{t=0}^{p^j}\lambda(\binom{\vh}{t})\binom{p^j-i}{p^j-t})v_{i-p^j}}{p^j\leq i,}$$

$$\binom{\vh}{p^j}v_i=(\sum_{t=0}^{p^j}\binom{-2i}{p^j-t}\lambda(\binom{\vh}{t}))v_i,$$

\begin{equation}\label{fact2}
	\vf^{(p^j)}v_i=\threepartdef{\binom{p^j+i}{p^j}v_{i+p^j}}{i+p^j<p^{r+1},}{0}{j\neq r\,\mbox{and}\,i+p^j\geq p^{r+1},}{\frac{1}{(p-1)!}\chi(\vf^{(p^r)})^pv_k}{j=r\,\mbox{and}\, i+p^r=p^{r+1}+k\,\, \mbox{for}\,\, k\geq 0.}
\end{equation}
Note that here we are defining $\lambda(1)$ to be equal to 1.

In fact, we can even say that 
\begin{equation}\label{eact2}
	\ve^{(l)}v_k=\twopartdef{0}{l>k,}{\lambda(\binom{\vh -k+l}{l})v_{k-l}=(\sum_{t=0}^{l}\lambda(\binom{\vh}{t})\binom{l-k}{l-t})v_{k-l}}{l\leq k}
\end{equation}
and 
\begin{equation}\label{hact2}
	\binom{\vh}{l}v_k=\lambda(\binom{\vh-2k}{l})v_k=(\sum_{t=0}^{l}\binom{-2k}{l-t}\lambda(\binom{\vh}{t}))v_k
\end{equation}
for $0\leq k,l<p^{r+1}$.

Before going any further, let us recall some properties of the divided powers of $\ve,\vh,\vf$. Suppose that $k=a_0+a_1p+\ldots + a_{r-1}p^{r-1}+a_rp^r$ with $0\leq a_i<p$ for each $i$. Then we have
$$\ve^{(k)}=\ve^{(a_0)}\ve^{(a_1p)}\ldots\ve^{(a_{r-1}p^{r-1})}\ve^{(a_rp^r)}=\frac{1}{a_0!a_1!\ldots a_r!}\ve^{a_0}(\ve^{(p)})^{a_1}\ldots(\ve^{(p^{r-1})})^{a_{r-1}}(\ve^{(p^r)})^{a_r},$$

$$\vf^{(k)}=\vf^{(a_0)}\vf^{(a_1p)}\ldots\vf^{(a_{r-1}p^{r-1})}\vf^{(a_rp^r)}=\frac{1}{a_0!a_1!\ldots a_r!}\vf^{a_0}(\vf^{(p)})^{a_1}\ldots(\vf^{(p^{r-1})})^{a_{r-1}}(\vf^{(p^r)})^{a_r},$$
\begin{multline*}
	\binom{\vh}{k}=\binom{\vh}{a_0}\binom{\vh}{a_1p}\ldots\binom{\vh}{a_{r-1}p^{r-1}}\binom{\vh}{a_rp^r}=\\=\frac{1}{a_0!a_1!\ldots a_r!}\vh(\vh-1)\ldots(\vh-a_0+1)(\binom{\vh}{p})(\binom{\vh}{p}-1)\ldots (\binom{\vh}{p^r}-a_r+2)(\binom{\vh}{p^r}-a_r+1).
\end{multline*}
Since $\lambda$ is an indexed algebra subspace homomorphism, this tells us that 
\begin{multline*}
	\lambda(\binom{\vh}{k})=\frac{1}{a_0!a_1!\ldots a_r!}\lambda(\vh)(\lambda(\vh)-1)\ldots(\lambda(\vh)-a_0+1)(\lambda(\binom{\vh}{p}))\times\\\times(\lambda(\binom{\vh}{p})-1)\ldots (\lambda(\binom{\vh}{p^r})-a_r+2)(\lambda(\binom{\vh}{p^r})-a_r+1).
\end{multline*}
Another useful observation to make is that $\ve^{(t)}v_t=\lambda(\binom{\vh}{t})v_0$.
Let us now examine the different cases for $\chi$.

\subsection{Non-zero semisimple $\chi$}
\label{sec7.2}

The definition of $\Lambda^r_\chi$ in this case tells us that $\lambda(\binom{\vh}{p^r})^p-\lambda(\binom{\vh}{p^r})=\chi(\binom{\vh}{p^r})^p\neq 0$ and hence that $\lambda(\binom{\vh}{p^r})\notin \bF_p$. We also know that $\chi(\vf^{(p^r)})=0$. In particular, using the above notation, we get that $\lambda(\binom{\vh}{k})=0$ if and only if $a_i>\lambda(\binom{\vh}{p^i})$ for some $0\leq i<r$ (here we are abusing notation slightly to treat $\bF_p$ as the subset $\{0,1,2,\ldots, p-1\}$ of the integers - observing that $\lambda(\binom{\vh}{p^i})^p=\lambda(\binom{\vh}{p^i})$ for 
$0\leq i<r$ means that all such $\lambda(\binom{\vh}{p^i})$ lie inside $\bF_p$).

Consider the vector subspace of $Z^r_\chi(\lambda)$ with basis
$$\{v_k\,\vert\,\lambda(\binom{\vh}{k})=0\}.$$ 
By above this is the same as 
$$\{v_{a_0+a_1p+\ldots+ a_{r-1}p^{r-1}+a_rp^r}\,\vert\, a_i>\lambda(\binom{\vh}{p^i})\, \mbox{for some}\, i\in\{0,1,\ldots,r-1\}\}.$$
We shall denote this subspace by $M^r_\chi(\lambda)$. Note that $M^0_\chi(\lambda)=0$ for all choices of $\chi,\lambda$. 

\begin{lemma}\label{sssub}
	$M^r_\chi(\lambda)$ is a $U_\chi^{[r]}(G)$-submodule of $Z^r_\chi(\lambda)$.
\end{lemma}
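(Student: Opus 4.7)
The plan is to verify that $M^r_\chi(\lambda)$ is closed under the action of a generating set of $\Uprc$. By the $p$-adic version of the PBW basis from Subsection~\ref{sec3.2}, it suffices to check closure under $\binom{\vh}{p^j}$, $\ve^{(p^j)}$, $\vf^{(p^j)}$ for $0 \leq j \leq r$. The torus case is immediate from \eqref{hact2}, since $\binom{\vh}{p^j}$ acts diagonally on the basis vectors $v_k$, so any spanning set of basis vectors is preserved.

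The heart of the argument for the $\ve^{(p^j)}$ action and the non-trivial part of the $\vf^{(p^j)}$ action comes from the two polynomial identities
\[
\binom{\vh}{k}\binom{k}{p^j} = \binom{\vh}{k-p^j}\binom{\vh-k+p^j}{p^j} \quad\text{and}\quad \binom{\vh}{k+p^j}\binom{k+p^j}{p^j} = \binom{\vh}{k}\binom{\vh-k}{p^j}
\]
valid in $U(\fg_\bC)$. Applying the indexed algebra subspace homomorphism $\lambda$ (extended by $\lambda(1)=1$) to both sides of each identity is legitimate because the $\Di^+$-degrees of the factors on the right sum to $k < p^{r+1}$ and $k+p^j < p^{r+1}$ respectively, so the multiplicativity built into the definition of an indexed algebra subspace homomorphism applies. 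Thus
\[
\binom{k}{p^j}\lambda\!\left(\binom{\vh}{k}\right) = \lambda\!\left(\binom{\vh}{k-p^j}\right)\lambda\!\left(\binom{\vh-k+p^j}{p^j}\right),
\]
and analogously for the second identity.

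For the $\ve^{(p^j)}$ case, if $v_k\in M^r_\chi(\lambda)$, then $\lambda(\binom{\vh}{k})=0$, so the right-hand side of the first displayed equation vanishes; hence either $\lambda(\binom{\vh}{k-p^j})=0$ (in which case $v_{k-p^j}\in M^r_\chi(\lambda)$) or $\lambda(\binom{\vh-k+p^j}{p^j})=0$ (in which case $\ve^{(p^j)} v_k = 0$ by \eqref{eact2}). Either way $\ve^{(p^j)} v_k \in M^r_\chi(\lambda)$. The analogous argument using the second identity handles the first subcase of \eqref{fact2}: when $k+p^j<p^{r+1}$, the image $\vf^{(p^j)} v_k = \binom{k+p^j}{p^j} v_{k+p^j}$ either has zero scalar or has $v_{k+p^j}\in M^r_\chi(\lambda)$.

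The remaining subcases of \eqref{fact2} I would dispose of directly: when $j\neq r$ and $k+p^j\geq p^{r+1}$ the output is $0$ outright, and when $j = r$ and $k+p^r\geq p^{r+1}$ the output is a scalar multiple of $\chi(\vf^{(p^r)})^p$, which vanishes because $\chi$ is conjugate to a form killing $\vf$ (and $\Upsilon_{r,r}(\vf^{(p^r)}) = \vf$, see Corollary~\ref{surj}). I expect the main subtle point to be this last observation: the non-zero semisimplicity of $\chi$ is precisely what rescues the wraparound case for $\vf^{(p^r)}$, so the proof uses the hypothesis on $\chi$ in an essential way. The only other point requiring care is the degree bookkeeping that ensures $\lambda$ can be applied multiplicatively to the polynomial identities, and this is immediate once one observes that the relevant sums of subscripts are bounded by $p^{r+1}-1$.
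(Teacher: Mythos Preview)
Your proof is correct and close in spirit to the paper's, though you organise the $\vf$-part differently. Both you and the paper handle $\ve^{(l)}$ via the same key binomial identity (your first displayed equation is exactly the paper's Equation~(\ref{mult}) specialised to $l=p^j$), and both dispose of the wraparound case for $\vf^{(p^r)}$ by invoking $\chi(\vf^{(p^r)})=0$. The genuine difference is that for $\vf^{(p^j)}$ in the non-wraparound case the paper argues via the $p$-adic digit description of $M^r_\chi(\lambda)$ (if no carry occurs below level $r$, the digits only increase, preserving the condition $a_i>\lambda(\binom{\vh}{p^i})$; if a carry occurs below level $r$, the result is zero), whereas you treat $\vf$ symmetrically to $\ve$ via the dual binomial identity $\binom{\vh}{k+p^j}\binom{k+p^j}{p^j}=\binom{\vh}{k}\binom{\vh-k}{p^j}$. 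Your approach is slightly more economical (you only check generators, and the two cases are handled uniformly), while the paper's digit argument has the advantage that it transfers verbatim to the nilpotent case in Lemma~\ref{nilsub}, where the defining condition $\lambda(\binom{\vh}{k})=0$ is no longer the right description of $M^r_\chi(\lambda)$ and your second binomial identity would need modification.
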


\begin{proof}
	We need to show that this subspace is preserved by $\ve^{(l)}$, $\binom{\vh}{l}$ and $\vf^{(l)}$ for $0\leq l<p^{r+1}$. It is clearly preserved by all $\binom{\vh}{l}$, so we just need to show it for $\ve^{(l)}$ and  $\vf^{(l)}$. 
	
	Let $l=a_0+a_1p+\ldots a_{r-1}p^{r-1}+a_rp^r$ and $k=b_0+b_1p+\ldots+b_{r-1}p^r$ with $0\leq a_i,b_i<p$ for all $i$. Then we have
	$$\vf^{(l)}v_k=\frac{1}{a_0!a_1!\ldots a_{r-1}!a_r!b_0!b_1!\ldots b_{r-1}!b_r!}\vf^{a_0+b_0}(\vf^{(p)})^{a_1+b_1}\ldots(\vf^{(p^{r-1})})^{a_{r-1}+b_{r-1}}(\vf^{(p^r)})^{a_r+b_r}\otimes m_0.$$	
	If $a_i+b_i\geq p$ for some $0\leq i<r$ then this expression is zero, since $(\vf^{(p^i)})^{p}=0$. If $a_i+b_i<p$ for all $0\leq i\leq r$, then we have just increased the exponent in each term, which clearly will preserve $M^r_\chi(\lambda)$.
	
	The only remaining case is if $a_i+b_i<p$ for all $0\leq i< r$ and $a_r+b_r= p+s$ for some $0\leq s<p$. In this case, we get that
	$$\vf^{(l)}v_k=\frac{\chi(\vf^{(p^r)})^p}{a_0!a_1!\ldots a_{r-1}!a_r!b_0!b_1!\ldots b_{r-1}!b_r!}\vf^{a_0+b_0}(\vf^{(p)})^{a_1+b_1}\ldots(\vf^{(p^{r-1})})^{a_{r-1}+b_{r-1}}(\vf^{(p^r)})^{s}\otimes m_0=0$$ 
	as $\chi(\vf^{(p^r)})=0$ by assumption.	Hence, we get that $M^r_\chi(\lambda)$ is preserved by the $\vf^{(l)}$. 
	
	Observe that over $\bC$, we have for $l\leq k<p^{r+1}$ that $\sum_{t=0}^{l}\binom{\vh}{t}\binom{l-k}{l-t}=\binom{\vh+l-k}{l}$ and that $\binom{\vh+l-k}{l}\binom{\vh}{k-l}=\binom{k}{l}\binom{\vh}{k}$. In particular, this means that in $U_\chi^{[r]}(G)$ we have $$	(\sum_{t=0}^{l}\binom{\vh}{t}\binom{l-k}{l-t})\binom{\vh}{k-l}=\binom{k}{l}\binom{\vh}{k}$$
	and hence
	\begin{equation}\label{mult}
		(\sum_{t=0}^{l}\lambda(\binom{\vh}{t})\binom{l-k}{l-t})\lambda(\binom{\vh}{k-l})=\binom{k}{l}\lambda(\binom{\vh}{k}).
	\end{equation}
	
	Now let us compute $\ve^{(l)}v_k$ for $k$ with $\lambda(\binom{\vh}{k})=0$. When $l>k$ the expression is $0$ and the result follows, so we may assume $l\leq k$. Using Equation (\ref{eact2}), we get that	
	$$\ve^{(l)}v_k= (\sum_{t=0}^{l}\lambda(\binom{\vh}{t})\binom{l-k}{l-t})v_{k-l}.$$
	Since $\lambda(\binom{\vh}{k})=0$, Equation (\ref{mult}) tells us that either $(\sum_{t=0}^{l}\lambda(\binom{\vh}{t})\binom{l-k}{l-t})=0$, in which case $\ve^{(l)}v_k=0$ by above, or $\lambda(\binom{\vh}{k-l})=0$ in which case $v_{k-l}\in M^r_\chi(\lambda)$ and so $\ve^{(l)}v_k\in M^r_\chi(\lambda)$.
	
	In either case, $M^r_\chi(\lambda)$ is preserved by the $\ve^{(l)}$ and we are done.	
	
\end{proof}

\begin{prop}\label{ssmax}
	$M^r_\chi(\lambda)$ is a maximal submodule of $Z^r_\chi(\lambda)$.
\end{prop}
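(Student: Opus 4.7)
The plan is to show that the quotient $Z^r_\chi(\lambda)/M^r_\chi(\lambda)$ is a simple $U^{[r]}_\chi(G)$-module, which is equivalent to $M^r_\chi(\lambda)$ being maximal. By Lemma~\ref{sssub} and the explicit description of $M^r_\chi(\lambda)$, the quotient has $\bK$-basis given by the images $\bar v_k$ of those $v_k$ for which $\lambda(\binom{\vh}{k})\neq 0$; in particular $\bar v_0$ is nonzero since $\lambda(\binom{\vh}{0})=\lambda(1)=1$.

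First I would take an arbitrary nonzero submodule $N$ of the quotient together with a nonzero element $\bar v\in N$, and show that $\bar v_0\in N$. Writing $\bar v=\sum_k c_k\bar v_k$ with the sum indexed by $k$ satisfying $\lambda(\binom{\vh}{k})\neq 0$, let $k_0$ be the maximum such index with $c_{k_0}\neq 0$. Applying $\ve^{(k_0)}$ then kills every term with $j<k_0$ by the case $l>k$ of formula~(\ref{eact2}), while on $\bar v_{k_0}$ the formula yields $\ve^{(k_0)}\bar v_{k_0}=\lambda(\binom{\vh}{k_0})\bar v_0$ (the sum collapses to the $t=k_0$ term since $\binom{0}{k_0-t}=\delta_{t,k_0}$). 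Because $\lambda(\binom{\vh}{k_0})\neq 0$ by the choice of $k_0$, we conclude $\bar v_0\in N$.

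Second I would show that $\bar v_0$ generates the whole quotient. Directly from the construction of $Z^r_\chi(\lambda)$ we have $v_l=\vf^{(l)}\otimes m_0=\vf^{(l)}v_0$ for every $0\leq l<p^{r+1}$, so every basis vector of the quotient lies in $U^{[r]}_\chi(G)\cdot\bar v_0$. Hence $N$ must be the entire quotient, which proves simplicity.

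The only step requiring care is the isolation of the leading coefficient in the first part: one must take $k_0$ maximal so that after applying $\ve^{(k_0)}$ the higher-index terms cannot reappear, which relies crucially on the vanishing $\ve^{(l)}v_k=0$ for $l>k$ from (\ref{eact2}). The rest reduces to direct substitution into the explicit action formulas already recorded, and no additional structural input about $\lambda$ beyond $\lambda(\binom{\vh}{k_0})\neq 0$ is needed.
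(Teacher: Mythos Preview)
Your proposal is correct and follows essentially the same argument as the paper: both show the quotient is irreducible by taking a nonzero element of an arbitrary nonzero submodule, applying $\ve^{(s)}$ where $s$ is the maximal index appearing, and using $\ve^{(s)}v_s=\lambda(\binom{\vh}{s})v_0\neq 0$ to conclude $v_0$ lies in the submodule, whence the submodule is everything.
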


\begin{proof}
	It is enough to show that the quotient module $L^r_\chi(\lambda)\coloneqq Z^r_\chi(\lambda)/M^r_\chi(\lambda)$ is irreducible. From the above description it is clear that $L^r_\chi(\lambda)$ has as basis the images under the quotient map of the elements
	$$\{v_k\,\vert\,\lambda(\binom{\vh}{k})\neq 0\}.$$
	We shall abuse notation to denote by $v_k$ both the element in $Z^r_\chi(\lambda)$ and its image in the quotient.
	
	We can see that $\ve^{(k)}v_k=\lambda(\binom{\vh}{k})v_0\neq 0$ for the $v_k$ in this basis.
	
	Let $N$ be a non-zero submodule of $L^r_\chi(\lambda)$. There hence exists a non-zero element	$v=\sum \alpha_t v_t\in N$ where the sum is over all $0\leq t<p^{r+1}$ with $\lambda(\binom{\vh}{t})\neq 0$. Suppose $s$ is the largest such element with $\alpha_s\neq 0$. Then we have
	$$\ve^{(s)}v=\sum \alpha_t\ve^{(s)}v_t=\alpha_s\lambda(\binom{\vh}{s})v_0\neq 0.$$
	Hence $v_0\in N$ and it easy to see that we must have $N=L^r_\chi(\lambda)$, so $L^r_\chi(\lambda)$ is irreducible.
\end{proof}


\begin{prop}\label{ssuniq}
	$M^r_\chi(\lambda)$ is the unique maximal submodule of $Z^r_\chi(\lambda)$.
\end{prop}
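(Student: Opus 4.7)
The plan is to exploit the semisimplicity of $U^{[r]}_\chi(T) \subseteq \Uprc$ (established in Example~\ref{multi}) to decompose $Z^r_\chi(\lambda)$ into distinct one-dimensional $T$-eigenspaces, and then use the action of $\ve^{(k)}$ to pin down which eigenspaces may occur in a proper submodule. By Equation~\eqref{hact2}, each basis vector $v_k$ is a simultaneous eigenvector for every $\binom{\vh}{l}$, hence an eigenvector for $U^{[r]}_\chi(T)$; write $\phi_k \colon U^{[r]}_\chi(T) \to \bK$ for the corresponding character, so that $\phi_k(\binom{\vh}{l}) = \lambda(\binom{\vh - 2k}{l})$.

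The central technical step is to show that the characters $\phi_0, \ldots, \phi_{p^{r+1}-1}$ are pairwise distinct. I would argue by induction on $j$ that $\phi_k = \phi_{k'}$ forces $k \equiv k' \pmod{p^{j+1}}$ for each $0 \leq j \leq r$. The base case $j = 0$ follows at once from $\phi_k(\vh) = \phi_{k'}(\vh)$, using that $p > 2$. For the inductive step, with $k - k' = p^j m$, I would expand $\binom{\vh - 2k}{p^j} - \binom{\vh - 2k'}{p^j}$ via Vandermonde and apply Lucas' theorem: for $0 < i \leq p^j$, the factor $\binom{-2k}{p^j-i} - \binom{-2k'}{p^j-i}$ vanishes modulo $p$, so only the $i = 0$ term contributes. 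A further Lucas computation using the identity $\binom{y + p^j c}{p^j} \equiv \binom{y}{p^j} + c \pmod{p}$ then yields $\phi_k(\binom{\vh}{p^j}) - \phi_{k'}(\binom{\vh}{p^j}) \equiv -2m \pmod{p}$, so $\phi_k = \phi_{k'}$ forces $m \equiv 0 \pmod{p}$ and the induction closes. At $j = r$ this gives $k = k'$.

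With distinctness of the $\phi_k$ in hand, semisimplicity of $U^{[r]}_\chi(T)$ forces any $\Uprc$-submodule $N$ of $Z^r_\chi(\lambda)$ to decompose as $N = \bigoplus_{k \in S} \bK v_k$ for some subset $S \subseteq \{0, 1, \ldots, p^{r+1} - 1\}$. If $0 \in S$ then $v_0 \in N$, and since $v_0$ generates $Z^r_\chi(\lambda)$ we get $N = Z^r_\chi(\lambda)$. Otherwise, for each $k \in S$ the relation $\ve^{(k)} v_k = \lambda(\binom{\vh}{k}) v_0 \in N$, combined with $v_0 \notin N$, forces $\lambda(\binom{\vh}{k}) = 0$; this places $v_k$ in $M^r_\chi(\lambda)$, and hence $N \subseteq M^r_\chi(\lambda)$.

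The main obstacle is the injectivity of $k \mapsto \phi_k$: the Vandermonde/Lucas calculation must be organised so that each inductive step extracts exactly the next base-$p$ digit of $k - k'$. Once this is done, the submodule analysis is a routine consequence of the structure of modules over the semisimple algebra $U^{[r]}_\chi(T)$.
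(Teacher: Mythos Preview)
Your argument is correct and takes a genuinely different route from the paper's. The paper does not establish that the weights $\phi_k$ are pairwise distinct; instead it starts with an element $w\in N\setminus M^r_\chi(\lambda)$, applies $\ve^{(l)}$ (for the largest index $l$ with $\alpha_l\neq 0$ and $\lambda(\binom{\vh}{l})\neq 0$) to produce a vector of the form $v_0+\sum_i\gamma_i v_{k_i}$ with all $v_{k_i}\in M^r_\chi(\lambda)$, and then uses a triangular computation with the operators $\vf^{(k_j)}$ to peel off the $v_{k_i}$ one by one, concluding that $M^r_\chi(\lambda)\subseteq N$.

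Your approach is more conceptual: once the $\phi_k$ are known to be distinct, semisimplicity of $U^{[r]}_\chi(T)$ forces every $\Uprc$-submodule to be spanned by a subset of the $v_k$, and a single application of $\ve^{(k)}$ finishes the job. This even yields the slightly sharper statement that every proper submodule lies inside $M^r_\chi(\lambda)$. The price is the inductive Lucas/Vandermonde computation separating the $\phi_k$, which genuinely uses $p>2$ at each step (though the section already assumes this). The paper's approach trades that number-theoretic induction for a longer module-theoretic triangular argument, but needs no information about the distinctness of weights. Both routes are of comparable length; yours generalises more readily to other situations where one has a weight decomposition with distinct weights, while the paper's method is closer in spirit to the nilpotent-$\chi$ argument in the next subsection, where the weights are no longer all distinct and your decomposition argument would require modification.
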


\begin{proof}
	Let $N$ be a maximal submodule of $Z^r_\chi(\lambda)$ with $N\neq M^r_\chi(\lambda)$. Hence, there exists
	$$w=\sum_{i=0}^{p^{r+1}-1}\alpha_iv_i\in N$$	
	with $\alpha_i\neq 0$ for at least one $i$ with $\lambda(\binom{\vh}{i})\neq 0$. Let $l$ be the largest such integer. Then we have
	$$\ve^{(l)}w=\sum_{i=0}^{p^{r+1}-1}\alpha_i\ve^{(l)}v_i=\alpha_l\lambda(\binom{\vh}{l})v_0 + \sum_{i=1}^{p^{r+1}-1} \beta_iv_i\in N$$
	where we have $\beta_i=0$ whenever $\lambda(\binom{\vh}{i})\neq 0$. This follows from the description of the action and from Equation (\ref{mult}). Let $k_1<\ldots<k_s$ be the integers between 1 and $p^{r+1}-1$ with $\lambda(\binom{\vh}{k_i})=0$. Rescaling, we can assume
	$$v\coloneqq v_0 + \sum_{i=1}^{s}\gamma_i^0v_{k_i}\in N$$ 
	where $\gamma_i^0\in\bK$ (here the superscripts are used for indexing). 
	
	Now, we see that for $1\leq j\leq s$ we have	
	$$ \vf^{(k_j)}v=\vf^{(k_j)}v_0+\sum_{i=1}^{s}\gamma_i^0\vf^{(k_j)}v_{k_i} =v_{k_j} + \sum_{i=1}^{s}\gamma_i^0\binom{k_i+k_j}{k_j}v_{k_i+k_j}\in N.$$
	We know $\vf^{(k_j)}v_{k_i}=\binom{k_i+k_j}{k_j}v_{k_i+k_j}$.
	Since $\chi(\vf^{(p^r)})=0$, we get that $\vf^{(k_j)}v_{k_i}=0$ if $k_i+k_j>p^{r+1}$. On the other hand, $\vf^{(k_j)}v_{k_i}\in M^r_\chi(\lambda)$ for $i=1,\ldots,s$, since $M^r_\chi(\lambda)$ is a submodule of $Z^r_\chi(\lambda)$. Thus, $\vf^{(k_j)}v_{k_i}$ is a linear combination of those basis elements of $M_\chi(\lambda)$ whose index is at least $k_i+k_j$. In other words, 	
	$$\vf^{(k_j)}v= v_{k_j} + \sum_{i=j+1}^{s}\gamma_i^jv_{k_i}\in N$$
	for some $\gamma_i^j\in \bK$.
	
	In particular, this tells us that
	$$\vf^{(k_s)}v= v_{k_s}\in N,$$
	$$\vf^{(k_{s-1})}v=v_{k_{s-1}}+ \gamma_{s}^{s-1}v_{k_s}\in N,$$
	$$\vf^{(k_{s-2})}v=v_{k_{s-2}}+\gamma_{s-1}^{s-2}v_{k_s-1}+ \gamma_{s}^{s-2}v_{k_s}\in N$$
	and so on. This tells us inductively that $v_{k_i}\in N$ for all $1\leq i\leq s$, and hence that $M^r_\chi(\lambda)\subset N$. But since we know that $M^r_\chi(\lambda)\neq N$, we must have that $N=Z^r_\chi(\lambda)$, contradicting maximality.
	
	Hence, $M^r_\chi(\lambda)$ is the unique maximal submodule of $Z^r_\chi(\lambda)$.
\end{proof}

We have constructed all irreducible $U_\chi^{[r]}(SL_2)$-modules in the case when $\chi\neq 0$ is semisimple. 
Given $\lambda\in\Lambda^r_\chi$, we therefore get a unique irreducible $U_\chi^{[r]}(SL_2)$-module of dimension:
$$(\lambda(\vh)+1)(\lambda(\binom{\vh}{p})+1)\ldots (\lambda(\binom{\vh}{p^{r-1}})+1)p,$$
where we view $\lambda(\binom{\vh}{p^i})$ as elements of the set $\{0,1,2,\ldots,p-1\}$ for $0\leq i<r$.

We can also say something about the structure of these irreducible modules as $\Di(G_{(r)})$ modules.

\begin{prop}\label{DistDec}
	Each irreducible $U_{\chi}^{[r]}(G)$-module $M$ decomposes as $\Di(G_{(r)})$-modules into a direct sum of $p$ copies of the same irreducible $\Di(G_{(r)})$-module. In particular, if $M$ is a quotient of $Z_\chi^r(\lambda)$ for $\lambda\in \Lambda_\chi^r$ then this irreducible $\Di(G_{(r)})$-module is the module $L_r(\lambda)$ defined in \cite{Jan}.
\end{prop}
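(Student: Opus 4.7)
By Lemma~\ref{Verma} together with Propositions~\ref{ssmax} and~\ref{ssuniq}, every irreducible $U_\chi^{[r]}(SL_2)$-module is isomorphic to $L_\chi^r(\lambda)=Z_\chi^r(\lambda)/M_\chi^r(\lambda)$ for some $\lambda\in\Lambda_\chi^r$, so it suffices to treat this case. Write $\lambda_i\coloneqq\lambda(\binom{\vh}{p^i})$ for $0\le i<r$, viewed in $\{0,1,\ldots,p-1\}$; semisimplicity of $\chi$ forces $\lambda(\binom{\vh}{p^r})\notin\bF_p$, so in particular $\lambda(\binom{\vh}{p^r})\neq 0$. The plan is to exhibit $p$ distinguished highest weight vectors $w_j\coloneqq v_{jp^r}$ ($0\le j\le p-1$) for the subalgebra $\Di(G_{(r)})\hookrightarrow U_\chi^{[r]}(SL_2)$ inside $L_\chi^r(\lambda)$, to identify each cyclic submodule $N_j\coloneqq\Di(G_{(r)})\cdot w_j$ as a copy of $L_r(\lambda)$, and to conclude the direct sum decomposition $L_\chi^r(\lambda)=\bigoplus_{j=0}^{p-1}N_j$ by a basis-level dimension count.

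For the highest weight vector property, the key input is Equation~(\ref{mult}). For each $1\le l<p^r$ and $j\ge 1$, Equations~(\ref{eact2}) and~(\ref{mult}) yield
$$\Bigl(\sum_{t=0}^l \lambda(\binom{\vh}{t})\binom{l-jp^r}{l-t}\Bigr)\lambda(\binom{\vh}{jp^r-l})=\binom{jp^r}{l}\lambda(\binom{\vh}{jp^r}),$$
whose right-hand side vanishes modulo $p$ by Lucas' theorem applied to $\binom{jp^r}{l}$. Hence either the leading scalar is zero, or $v_{jp^r-l}\in M_\chi^r(\lambda)$; in either case $\ve^{(l)}w_j=0$ in $L_\chi^r(\lambda)$ (the case $j=0$ being immediate from $l>0$). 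A parallel Lucas computation showing $\binom{-2jp^r}{m}\equiv 0\pmod p$ for $1\le m<p^r$, combined with~(\ref{hact2}), gives $\binom{\vh}{l}w_j=\lambda(\binom{\vh}{l})w_j$ for every $0<l<p^r$. Thus all $w_j$ share the same $\Di(T_{(r)})$-weight, corresponding to the integer $\mu=\sum_{i<r}\lambda_i p^i$.

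To pin down $N_j$, observe that~(\ref{fact2}) together with Lucas gives $\vf^{(l)}v_{jp^r}=v_{jp^r+l}$ in $Z_\chi^r(\lambda)$ for every $l<p^r$, so $N_j$ contains each $v_{jp^r+l}$ surviving in the basis of $L_\chi^r(\lambda)$---those $l$ whose base-$p$ digits satisfy $l_i\le\lambda_i$ for all $i<r$---forcing $\dim N_j\ge\prod_{i<r}(\lambda_i+1)$. Conversely, writing $\Di(G_{(r)})$ in its PBW basis of monomials $\ve^{(a)}\binom{\vh}{b}\vf^{(c)}$ with $a,b,c<p^r$, the action of $\vf^{(c)}$ sends $w_j$ into the span of $v_{jp^r+c}$, then $\binom{\vh}{b}$ acts by a scalar, and finally $\ve^{(a)}$ either stays in the $p^r$-digit $j$ sector (when $a\le c$) or vanishes in $L_\chi^r(\lambda)$ (when $a>c$, again by~(\ref{mult}) combined with $\binom{jp^r+c}{a}=\binom{c}{a}\equiv 0\pmod p$ by Lucas). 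So $N_j$ is exactly the span of the basis vectors $v_{jp^r+l}$ in $L_\chi^r(\lambda)$, of dimension $\prod_{i<r}(\lambda_i+1)$. Since the basis subsets for distinct $j$ are pairwise disjoint and together exhaust the basis of $L_\chi^r(\lambda)$, we obtain $L_\chi^r(\lambda)=\bigoplus_{j=0}^{p-1}N_j$. Each $N_j$ is cyclically generated by a highest weight vector of weight $\mu$ and has dimension $\dim L_r(\mu)=\prod_{i<r}(\mu_i+1)$ by Steinberg's tensor product theorem, so necessarily $N_j\cong L_r(\mu)$; under the identification of $\mu$ with the restriction of $\lambda$ to $\Di(T_{(r)})$, this is the module $L_r(\lambda)$ of~\cite{Jan}.

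The most delicate step is pinning down $N_j$ exactly rather than just bounding it below: without further argument, an $\ve^{(a)}$-action could in principle ``borrow'' from the $p^r$-digit of $v_{jp^r+c}$ and produce a vector with $p^r$-digit $j-1$ lying outside the expected sector. The rescue in each such case is Equation~(\ref{mult}), which forces the relevant scalar to be proportional to the Lucas-vanishing binomial $\binom{c}{a}$ whenever $a>c$ and $a,c<p^r$.
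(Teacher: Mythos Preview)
Your argument is correct and lands on the same basis-level decomposition $L_\chi^r(\lambda)=\bigoplus_{j=0}^{p-1}N_j$ with $N_j$ the ``$j$-sector'' spanned by the surviving $v_{jp^r+l}$, but the way you identify each $N_j$ with $L_r(\lambda)$ genuinely differs from the paper. The paper isolates $N=N_0$, checks directly that it is a $\Di(G_{(r)})$-submodule, cites the known representation theory of $\Di(G_{(r)})$ to get $N\cong L_r(\lambda)$, and then builds \emph{explicit} $\Di(G_{(r)})$-isomorphisms $\phi_a:N\to\vf^{(ap^r)}N$, $v_z\mapsto v_{ap^r+z}$, verifying by hand that these intertwine the actions of $\ve^{(l)}$, $\binom{\vh}{l}$, and $\vf^{(l)}$ for $l<p^r$. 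You instead show that every $w_j=v_{jp^r}$ is a $\Di(G_{(r)})$-highest weight vector of the common weight $\mu=\lambda|_{\Di(T_{(r)})}$, pin down the cyclic submodule $N_j=\Di(G_{(r)})\cdot w_j$ as precisely the $j$-sector, and deduce $N_j\cong L_r(\mu)$ from highest weight theory together with the Steinberg dimension formula $\dim L_r(\mu)=\prod_{i<r}(\mu_i+1)$. Your route is more conceptual and avoids the generator-by-generator intertwining checks, at the cost of importing the dimension of $L_r(\mu)$; the paper's route is more self-contained and has the bonus that the explicit maps $\phi_a$ are reusable (indeed the paper recycles most of this computation in the nilpotent case, Proposition~\ref{DistDec2}).

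One small imprecision in your final paragraph: Equation~(\ref{mult}) does not literally force the $\ve^{(a)}$-scalar to be ``proportional to $\binom{c}{a}$''. What it gives is that the product of that scalar with $\lambda\bigl(\binom{\vh}{jp^r+c-a}\bigr)$ equals $\binom{jp^r+c}{a}\lambda\bigl(\binom{\vh}{jp^r+c}\bigr)=0$, so either the scalar vanishes or $v_{jp^r+c-a}\in M_\chi^r(\lambda)$. You handle this dichotomy correctly in the main body; the closing summary just overstates it.
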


Note that we are abusing notation slightly to identify $\lambda\in \Lambda_\chi^r$ with $\lambda\in X_r(T)$ (as defined in \cite{Jan}), but it is straightforward to see how this can be done. Specifically, each $\lambda\in\Di_{p^{r+1}-1}^+(T)^{\overline{*}}$ with
$\lambda(\binom{\vh}{p^k})^p-\lambda(\binom{\vh}{p^k})=0$ for $1\leq k<r$ and $\lambda(\binom{\vh}{p^r})^p-\lambda(\binom{\vh}{p^r})=\chi(\binom{\vh}{p^r})^p$, restricts to $\lambda\in\Di_{p^{r}-1}^+(T)^{\overline{*}}$ with $\lambda(\binom{\vh}{p^k})^p-\lambda(\binom{\vh}{p^k})=0$ for $1\leq k<r$. This then induces an algebra homomorphism $\lambda:\Di(T_{(r)})\to \bK$, which gives $\lambda\in X_r(T)$. This argument is independent of $\chi$, so shall also be used in subsequent sections.

\begin{proof}
	By Lemma~\ref{sssub} and Propositions~\ref{ssmax} and \ref{ssuniq}, we know that each irreducible $U_\chi^{[r]}(G)$-module is obtained as the unique irreducible quotient module of $Z_\chi^r(\lambda)$ for some $\lambda\in\Lambda_\chi^r$ and has as basis the images under the quotient map of
	$$\{v_k\,\vert\,\lambda(\binom{\vh}{k})\neq 0\,\mbox{and}\,\,0\leq k<p^{r+1}\}$$
	or equivalently those of
	$$\{v_{a_0+a_1p+\ldots+ a_{r-1}p^{r-1}+a_rp^r}\,\vert\, 0\leq a_i\leq\lambda(\binom{\vh}{p^i})\, \mbox{for all}\, i\in\{0,1,\ldots,r-1\},\,\, 0\leq a_r< p\}.$$
	We shall denote this module as $L_\chi^r(\lambda)$. We shall abuse notation to denote by $v_k$ both the element in $Z^r_\chi(\lambda)$ and its image in $L_\chi^r(\lambda)$.
	
	Let $N$ be the subspace of $L_\chi^r(\lambda)$ with basis 
	$$\{v_k\,\vert\,\lambda(\binom{\vh}{k})\neq 0\,\mbox{and}\,\,0\leq k<p^r\}=\{v_{a_0+a_1p+\ldots+ a_{r-1}p^{r-1}}\,\vert\, a_i\leq\lambda(\binom{\vh}{p^i})\, \mbox{for all}\, i\in\{0,1,\ldots,r-1\}\}.$$
	We claim that $N$ is a $\Di(G_{(r)})$-module. It suffices, by Equations \ref{eact2} and \ref{hact2}, to check that $\vf^{(l)}N\subset N$ for $0\leq l<p^r$. This, however, is clear since $\vf^{(l)}v_k=\binom{k+l}{l}v_{k+l}$ and so either $k+l<p^r$ and we are done, or $k+l\geq p^r$ and then Lucas' Theorem gives $\binom{k+l}{l}=0$.
	
	From the basis description and Lucas' Theorem it is straightforward to see that $L_\chi^r(\lambda)=\bigoplus_{a=0}^{p-1}\vf^{(ap^r)}N$. So to get our result, we just need to show that $N$ is irreducible and that $N\cong \vf^{(ap^r)}N$ as $\Di(G_{(r)})$-modules for all $0\leq a<p$. That $N$ is irreducible (and in fact isomorphic to $L_r(\lambda)$) follows easily from the well known representation theory of $\Di(G_{(r)})$ (see \cite[II.3]{Jan} or Subsection~\ref{sec7.4} below).
	
	We now define, for $0\leq a<p$, the linear map $\phi_a:N\to \vf^{(ap^r)}N$ which is defined on the basis of $N$ as 
	$$v_{a_0+a_1p+\ldots+ a_{r-1}p^{r-1}}\mapsto \vf^{(ap^r)}v_{a_0+a_1p+\ldots+ a_{r-1}p^{r-1}}=v_{a_0+a_1p+\ldots+ a_{r-1}p^{r-1} +ap^r}.$$
	This map is clearly surjective and, by the irreducibility of $N$, it will be injective once we show that $\phi_a$ is a homomorphism of $\Di(G_{(r)})$-modules. To show this, it suffices to show for $0\leq l,z<p^r$ that $\vf^{(l)}\phi_a(v_z)=\phi_a(\vf^{(l)}v_z)$, $\binom{\vh}{l}\phi_a(v_z)=\phi_a(\binom{\vh}{l}v_z)$ and $\ve^{(l)}\phi_a(v_z)=\phi_a(\ve^{(l)}v_z)$.
	
	
	That $\vf^{(l)}\phi_a(v_z)=\phi_a(\vf^{(l)}v_z)$ follows easily from the action of $U_\chi^{[r]}(G)$ on $Z_\chi^r(\lambda)$ (Equation (\ref{fact2})) and Lucas' Theorem. 
	
	Observe that if $b_0,b_1,\ldots,b_r<p$ and $l<p^r$ then $\binom{b_0+b_1p+\ldots + b_{r-1}p^{r-1} + b_rp^r}{l}=\binom{b_0+b_1p+\ldots + b_{r-1}p^{r-1}}{l}\binom{b_r}{0}=\binom{b_0+b_1p+\ldots + b_{r-1}p^{r-1}}{l}$. In other words, the coefficient of $p^r$ does not matter when computing such a binomial coefficient.
	
	For $\binom{\vh}{l}$, by Equation (\ref{hact2}) we need to show that $\binom{-2ap^r-2z}{l-t}=\binom{-2z}{l-t}$ for all $0\leq t\leq l$. Observing that $\binom{-2ap^r-2z}{l-t}=(-1)^{l-t}\binom{2ap^r+2z+l-t-1}{l-t}$, the previous observation tells us that this is equal to $(-1)^{l-t}\binom{2z+l-t-1}{l-t}$ (unless $z=0$ and $l=t$ in which case the result is trivial) and this is equal to $\binom{-2z}{l-t}$. Hence $\binom{\vh}{l}\phi_a(v_z)=\phi_a(\binom{\vh}{l}v_z)$.
	
	For $\ve^{(l)}$, suppose first that $l<z$. Then Equation (\ref{eact2}) gives $\ve^{(l)}v_z=(\sum_{t=0}^{l}\lambda(\binom{\vh}{t})\binom{l-z}{l-t})v_{z-l}$
	and $\ve^{(l)}v_{ap^r+z}=(\sum_{t=0}^{l}\lambda(\binom{\vh}{t})\binom{l-ap^r-z}{l-t})v_{ap^r + z-l}$. Since $\binom{l-ap^r-z}{l-t}=(-1)^{l-t}\binom{ap^r+z-t-1}{l-t}=(-1)^{l-t}\binom{z-t-1}{l-t}=\binom{l-z}{l-t}$, the result holds. On the other hand, if $l\geq z$ then $\ve^{(l)}v_z=0$ and $\ve^{(l)}v_{ap^r+z}=\lambda(\binom{\vh-ap^r-z+l}{l})v_{ap^r + z-l}$. Since, as in the proof of Lemma~\ref{sssub}, $\lambda(\binom{\vh-ap^r-z+l}{l})\lambda(\binom{\vh}{ap^r+z-l})=\binom{ap^r+z}{l}\lambda(\binom{\vh}{ap^r+z})$, and $\binom{ap^r+z}{l}=0$, we get that either $\lambda(\binom{\vh-ap^r-z+k}{l})=0$ or $\lambda(\binom{\vh}{ap^r+z-l})=0$. The first option clearly gives $\ve^{(l)}v_{ap^r+z}=0$, while the second shows that $\ve^{(l)}v_{ap^r+z}\in  M^r_\chi(\lambda)$ and so is zero in $L^r_\chi(\lambda)$. Thus $\ve^{(l)}\phi_a(v_z)=\phi_a(\ve^{(l)}v_z)$
	
	The result follows.
\end{proof}

\subsection{Non-zero nilpotent $\chi$.}
\label{sec7.3}

In this case, we have $\chi(\vf^{(p^r)})=1$ and, from the definition of $\Lambda^r_\chi$, $\lambda(\binom{\vh}{p^r})^p=\lambda(\binom{\vh}{p^r})$, which hence implies $\lambda(\binom{\vh}{p^r})\in\bF_p$.

For this case we consider the vector subspace of $Z^r_\chi(\lambda)$ with basis
$$\{v_k\,\vert\,\lambda(\binom{\vh}{z})=0\,\,\mbox{where}\,\,k=ap^r+z \,\,\mbox{with}\,\,0\leq z<p^r, 0\leq a<p\}.$$
By a similar argument to the semisimple case, this is the same as 
$$\{v_{a_0+a_1p+\ldots+ a_{r-1}p^{r-1}+a_rp^r}\,\vert\, a_i>\lambda(\binom{\vh}{p^i})\, \mbox{for some}\, i\in\{0,1,\ldots,r-1\}\}.$$
We shall once again denote this subspace by $M^r_\chi(\lambda)$. Again, $M^0_\chi(\lambda)=0$ for all choices of $\chi,\lambda$. 

\begin{lemma}\label{nilsub}
	$M^r_\chi(\lambda)$ is a $U_\chi^{[r]}(G)$-submodule of $Z^r_\chi(\lambda)$.
\end{lemma}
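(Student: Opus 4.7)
My plan is to follow the template of Lemma~\ref{sssub}, checking closure of $M^r_\chi(\lambda)$ under each of the generators $\binom{\vh}{l}$, $\vf^{(l)}$, and $\ve^{(l)}$ of $U^{[r]}_\chi(G)$ for $0\leq l<p^{r+1}$. Closure under $\binom{\vh}{l}$ is immediate from its diagonal action on the basis.

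For $\vf^{(l)}$, I would write $l=\sum a_ip^i$ and $k=\sum b_ip^i$ in their $p$-adic expansions and, after commuting the $\vf^{(p^i)}$, compute
$$\vf^{(l)}v_k \;=\; \frac{1}{\prod a_i!\,b_i!}\,\vf^{a_0+b_0}(\vf^{(p)})^{a_1+b_1}\cdots(\vf^{(p^r)})^{a_r+b_r}v_0.$$
Three cases arise. (i) If $a_i+b_i\geq p$ for some $i<r$, then $(\vf^{(p^i)})^p$ collapses (via the defining relations of $U^{[r]}$, since the total degree $p^{i+1}<p^{r+1}$) to the $\Di(G)$-product $\frac{p^{i+1}!}{(p^i!)^p}\vf^{(p^{i+1})}\equiv 0\pmod{p}$, killing the expression. (ii) If $a_i+b_i<p$ for all $i$, the result is a scalar multiple of $v_{l+k}$; the hypothesis $v_k\in M^r_\chi(\lambda)$ gives $b_{i_0}>\lambda(\binom{\vh}{p^{i_0}})$ for some $i_0<r$, so $a_{i_0}+b_{i_0}>\lambda(\binom{\vh}{p^{i_0}})$ and hence $v_{l+k}\in M^r_\chi(\lambda)$. (iii) The genuinely new case relative to Lemma~\ref{sssub} is $a_r+b_r\geq p$ with $a_i+b_i<p$ for $i<r$: here the relation $(\vf^{(p^r)})^p=\chi(\vf^{(p^r)})^p=1$ in $U^{[r]}_\chi(G)$ produces a nonzero scalar multiple of $v_{l+k-p^{r+1}}$, whose low digits remain $(a_i+b_i)_{i<r}$, so the same inheritance argument places it in $M^r_\chi(\lambda)$.

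For $\ve^{(l)}$, I would use $\ve^{(l)}v_k=Cv_{k-l}$ with $C=\lambda(\binom{\vh-(k-l)}{l})$ together with identity (\ref{mult}): $C\cdot\lambda(\binom{\vh}{k-l})=\binom{k}{l}\lambda(\binom{\vh}{k})$. The factorisation $\lambda(\binom{\vh}{k})=\binom{\lambda(\binom{\vh}{p^r})}{a}\lambda(\binom{\vh}{z})$ (for $k=ap^r+z$), combined with $\lambda(\binom{\vh}{z})=0$, forces $\lambda(\binom{\vh}{k})=0$, whence $C\cdot\lambda(\binom{\vh}{k-l})=0$. Either $C=0$ and we are done, or $\lambda(\binom{\vh}{k-l})=0$ with $\lambda(\binom{\vh}{z'})=0$ (writing $k-l=a'p^r+z'$), giving $v_{k-l}\in M^r_\chi(\lambda)$ directly. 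The main obstacle, distinguishing this from the semisimple case, is the remaining sub-case in which $\lambda(\binom{\vh}{k-l})=0$ is forced by $a'>\lambda(\binom{\vh}{p^r})$ (so $\binom{\lambda(\binom{\vh}{p^r})}{a'}=0$) while $\lambda(\binom{\vh}{z'})\neq 0$. Here I would show $C=0$ by a direct computation using the characteristic-$p$ identity $(1+x)^{\vh-a'p^r-z'}=(1+x)^{\vh-z'}(1+x^{p^r})^{-a'}$, which expands to
$$\binom{\vh-a'p^r-z'}{l}\;=\;\sum_{m\geq 0}(-1)^m\binom{a'+m-1}{m}\binom{\vh-z'}{l-mp^r};$$
applying $\lambda$ termwise and analysing each summand via its $p$-adic digit structure under the constraint $a'>\lambda(\binom{\vh}{p^r})$ should force the total to vanish.
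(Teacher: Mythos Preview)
Your treatment of $\binom{\vh}{l}$ and $\vf^{(l)}$ is essentially the paper's: the three-case analysis for $\vf^{(l)}$ matches, including the new case (iii) where $(\vf^{(p^r)})^p$ contributes $\chi(\vf^{(p^r)})^p=1$ rather than $0$, and you correctly observe that the low $p$-adic digits are unchanged so membership in $M^r_\chi(\lambda)$ is inherited.

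The gap is in your $\ve^{(l)}$ argument. You correctly isolate the problematic sub-case $\lambda\bigl(\binom{\vh}{z'}\bigr)\neq 0$ with $a'>\lambda\bigl(\binom{\vh}{p^r}\bigr)$, where identity~(\ref{mult}) gives no information on $C$. But your proposed resolution --- the generating-function expansion of $\binom{\vh-a'p^r-z'}{l}$ followed by ``analysing each summand \ldots\ should force the total to vanish'' --- is not actually carried out, and it is not obvious how the constraint $a'>\lambda\bigl(\binom{\vh}{p^r}\bigr)$ alone makes that sum collapse. Each summand $\lambda\bigl(\binom{\vh-z'}{l-mp^r}\bigr)$ still depends on both $\lambda\bigl(\binom{\vh}{p^r}\bigr)$ and the low digits of $\lambda$, and there is no evident termwise vanishing; one would need a genuine cancellation argument, which you have not supplied.

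The paper sidesteps this sub-case entirely. It first treats $k<p^r$, where $k-l<p^r$ forces $a'=0$ and the sub-case cannot arise, so identity~(\ref{mult}) suffices exactly as in the semisimple proof. For general $k=ap^r+z$ it writes $v_k=\vf^{(ap^r)}v_z$ and uses the commutation formula
\[
\ve^{(l)}\vf^{(ap^r)}v_z=\sum_{t=0}^{\min(ap^r,l)}\vf^{(ap^r-t)}\binom{\vh-ap^r-l+2t}{t}\ve^{(l-t)}v_z.
\]
Since $z<p^r$ and $\lambda\bigl(\binom{\vh}{z}\bigr)=0$, the already-established $k<p^r$ case gives $\ve^{(l-t)}v_z\in M^r_\chi(\lambda)$; closure under $\binom{\vh}{i}$ and $\vf^{(j)}$ (proved first) then finishes the argument. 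This reduction is the missing idea in your proposal.
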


\begin{proof}
	We need to show that this subspace is preserved by $\ve^{(l)}$, $\binom{\vh}{l}$ and $\vf^{(l)}$ for $0\leq l<p^{r+1}$. It is clearly preserved by all $\binom{\vh}{l}$, so we just need to show it for $\ve^{(l)}$ and  $\vf^{(l)}$.
	
	Let $l=a_0+a_1p+\ldots a_{r-1}p^{r-1}+a_rp^r$ and $k=b_0+b_1p+\ldots+b_{r-1}p^{r-1} + b_rp^r$ with $0\leq a_i,b_i<p$ for all $i$.	Then we have $$\vf^{(l)}v_k=\frac{1}{a_0!a_1!\ldots a_{r-1}!a_r!b_0!b_1!\ldots b_{r-1}!b_r!}\vf^{a_0+b_0}(\vf^{(p)})^{a_1+b_1}\ldots(\vf^{(p^{r-1})})^{a_{r-1}+b_{r-1}}(\vf^{(p^r)})^{a_r+b_r}\otimes m_0.$$
	If $a_i+b_i\geq p$ for some $0\leq i<r$ then this expression is zero, since $(\vf^{(p^i)})^{p}=0$. If $a_i+b_i<p$ for all $0\leq i\leq r$, then we have just increased the exponent in each term, which clearly will preserve $M^r_\chi(\lambda)$.
	
	The only remaining case is if $a_i+b_i<p$ for all $0\leq i< r$ and $a_r+b_r= p+s$ for some $0\leq s<p$. In this case, we get from Equation (\ref{fact2}) that $$\vf^{(l)}v_k=\frac{\chi(\vf^{(p^r)})^p}{a_0!a_1!\ldots a_{r-1}!a_r!b_0!b_1!\ldots b_{r-1}!b_r!}\vf^{a_0+b_0}(\vf^{(p)})^{a_1+b_1}\ldots(\vf^{(p^{r-1})})^{a_{r-1}+b_{r-1}}(\vf^{(p^r)})^{s}\otimes m_0.$$ 
	By the second interpretation of the basis this clearly preserves $M^r_\chi(\lambda)$, since we have just increased the exponents in the $\vf^{(p^i)}$ with $i<r$. Hence, we get that $M^r_\chi(\lambda)$ is preserved by the $\vf^{(l)}$. 
	
	Recall from the semisimple case that we have for $l\leq k <p^{r-1}$
	\begin{equation}\label{mult2}
		(\sum_{t=0}^{l}\lambda(\binom{\vh}{t})\binom{l-k}{l-t})\lambda(\binom{\vh}{k-l})=\binom{k}{l}\lambda(\binom{\vh}{k}).
	\end{equation}
	Now let us compute $\ve^{(l)}v_k$ for $k=ap^r+z$ with $0\leq a<p$ and $0\leq z<p^r$ such that $\lambda(\binom{\vh}{z})=0$. When $l>k$ the expression is $0$ and we are fine, so we may assume $l\leq k$. 
	
	First, let us assume that $k<p^r$. So $v_k\in M^r_\chi(\lambda)$ is equivalent to $\lambda(\binom{\vh}{k})=0$. Using the above formula, we get that $$\ve^{(l)}v_k= (\sum_{t=0}^{l}\lambda(\binom{\vh}{t})\binom{l-k}{l-t})v_{k-l}.$$
	Since $\lambda(\binom{\vh}{k})=0$, Equation (\ref{mult}) tells us that either $(\sum_{t=0}^{l}\lambda(\binom{\vh}{t})\binom{l-k}{l-t})=0$, in which case $\ve^{(l)}v_k=0$, or $\lambda(\binom{\vh}{k-l})=0$. Since $k<p^r$, we also have $k-l<p^r$, so $\lambda(\binom{\vh}{k-l})=0$ if and only if $v_{k-l}\in M^r_\chi(\lambda)$, and we are done.
	
	Now suppose that $k=ap^r+ z$ for $0\leq z<p^r$. One can easily check that $\vf^{(k)}=\vf^{(ap^r)}\vf^{(z)}$, so $v_k=\vf^{(ap^r)}v_z$. Hence, we get
	$$\ve^{(l)}v_k=\ve^{(l)}\vf^{(ap^r)}v_z=\sum_{t=0}^{\min(ap^r,l)}\vf^{(ap^r-t)}\binom{\vh-ap^r-l+2t}{t}\ve^{(l-t)}v_z$$
	where $\binom{\vh-ap^r-l+2t}{t}=\sum_{i=0}^{t}\binom{2t-ap^r-l}{t-i}\binom{\vh}{i}$. Since $z<p^r$, we know by the previous case that $\ve^{(l-t)}v_z\in M^r_\chi(\lambda)$ for all $t$ since $\lambda(\binom{\vh}{z})=0$. Hence, since we have already shown that $M^r_\chi(\lambda)$ is preserved by the $\binom{\vh}{i}$ and the $\vf^{(ap^r-t)}$, we get that $\ve^{(l)}v_k=\ve^{(l)}\vf^{(ap^r)}v_z\in M^r_\chi(\lambda)$.
	
	Hence, $M^r_\chi(\lambda)$ is preserved by the $\ve^{(l)}$ and we are done.	
	
\end{proof}

\begin{prop}\label{nilmax}
	$M^r_\chi(\lambda)$ is a maximal submodule of $Z^r_\chi(\lambda)$.
\end{prop}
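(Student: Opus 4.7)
My plan is to prove this by showing that the quotient $L^r_\chi(\lambda) \coloneqq Z^r_\chi(\lambda)/M^r_\chi(\lambda)$ is irreducible. Since $\vf^{(l)}\bar v_0 = \bar v_l$ in $Z^r_\chi(\lambda)$ for all $0 \leq l < p^{r+1}$, it suffices to show that any non-zero submodule $N$ of $L^r_\chi(\lambda)$ contains $\bar v_0$. A direct imitation of Proposition~\ref{ssmax} fails: writing the largest-index element $k^* = a^*p^r + z^*$ in the support of some $0 \neq v \in N$ and applying $\ve^{(k^*)}$ gives $\alpha_{k^*}\lambda(\binom{\vh}{k^*})\bar v_0$, but the scalar $\lambda(\binom{\vh}{k^*}) = \binom{\lambda(\binom{\vh}{p^r})}{a^*}\lambda(\binom{\vh}{z^*})$ vanishes whenever $a^* > \lambda(\binom{\vh}{p^r})$ --- a situation that now arises because $\chi(\vf^{(p^r)}) = 1$ no longer kills the top $a$-slices, as it did in the semisimple case.

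To sidestep this, I would pass through the $\Di(G_{(r)})$-module structure of $L^r_\chi(\lambda)$. The computations in the proof of Proposition~\ref{DistDec} rely only on Lucas-type binomial identities (notably $\binom{-ap^r}{j} \equiv 0 \pmod{p}$ for $0 < j < p^r$), which are independent of whether $\chi$ is semisimple or nilpotent; so that proof adapts verbatim to give $L^r_\chi(\lambda) \cong L_r(\lambda)^{\oplus p}$ as $\Di(G_{(r)})$-modules, with the $a$-th copy being $\vf^{(ap^r)}N_0$ for $N_0 \coloneqq \mbox{span}\{\bar v_z : 0 \leq z < p^r,\ \lambda(\binom{\vh}{z}) \neq 0\}$. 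In particular, the $p$-dimensional subspace $S \coloneqq \mbox{span}\{\bar v_{ap^r} : 0 \leq a < p\}$ is exactly the space of $\Di(G_{(r)})$-singular vectors, and $N \cap S$ is automatically non-zero by semisimplicity.

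Next I would verify that $S$ is preserved by the operators $E \coloneqq \ve^{(p^r)}$ and $F \coloneqq \vf^{(p^r)}$ and identify the resulting module structure. Using the $\sl_2$ commutation formula $\ve^{(p^r)}\vf^{(ap^r)}v_0 = \vf^{((a-1)p^r)}\binom{\vh - (a-1)p^r}{p^r}v_0$ together with Lucas, one obtains $E\bar v_{ap^r} = (\lambda_r - a + 1)\bar v_{(a-1)p^r}$ for $a \geq 1$ and $E\bar v_0 = 0$, where $\lambda_r \coloneqq \lambda(\binom{\vh}{p^r}) \in \bF_p$; Equation~(\ref{fact2}) and $\chi(\vf^{(p^r)}) = 1$ give that $F$ cyclically permutes the basis of $S$. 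Setting $H \coloneqq [E, F]$ (which acts diagonally by $\lambda_r - 2a$), the triple $(E, F, H)$ satisfies the $\sl_2$ relations on $S$; combined with the identities $E^p = 0$ and $F^p = 1$ in $U^{[r]}_\chi(G)$ (from $\chi(\ve^{(p^r)}) = 0$ and $\chi(\vf^{(p^r)}) = 1$ respectively), this identifies $S$ with the baby Verma module for $\sl_2$ of highest weight $\lambda_r$ at a non-zero nilpotent character $\chi'$.

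Such baby Verma modules are irreducible by Friedlander--Parshall (as recalled in the introduction), and since $N \cap S$ is a non-zero $(E, F)$-stable subspace of $S$, we conclude $N \cap S = S$; in particular $\bar v_0 \in N$, finishing the argument. The main obstacle will be the Lucas-style binomial bookkeeping --- verifying that the $\Di(G_{(r)})$-decomposition of Proposition~\ref{DistDec} survives passage to the nilpotent case, checking that $S$ is indeed preserved by $E$ and $F$, and establishing the explicit formula for $E\bar v_{ap^r}$. Although $S$ fails to be stable under the full $U^{[r]}_\chi(G)$-action (e.g.\ under $\vf^{(l)}$ for $0 < l < p^r$), the subalgebra generated by $E$, $F$, and $\Di(G_{(r)})$ is enough to force irreducibility on $S$.
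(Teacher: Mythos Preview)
Your argument is correct and takes a genuinely different route from the paper. The paper's proof is a direct computation: starting from $0\neq v\in N$, it repeatedly applies $\ve^{(z)}$ (to kill the residue of the largest index modulo $p^r$) and powers of $\vf^{(p^r)}$, iterating until every surviving index is divisible by $p^r$; it then uses that the operators $\binom{\vh}{kp^r}$ act diagonally on $\{\bar v_{ap^r}\}$ with pairwise distinct eigenvalues (here $p>2$ is needed) to isolate a single $\bar v_{ap^r}$, and finally applies $\vf^{(p^r)}$ to reach $\bar v_0$. No appeal to the $r=0$ theory or to the $\Di(G_{(r)})$-decomposition is made.

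Your approach instead \emph{reverses} the logical order of the paper: you first establish the $\Di(G_{(r)})$-isotypic decomposition of the quotient (the content of Proposition~\ref{DistDec2}, whose proof does not actually require irreducibility of $L^r_\chi(\lambda)$, only irreducibility of the $\Di(G_{(r)})$-module $N_0$), then observe that the highest-weight space $S$ carries a $U_{\chi'}(\sl_2)$-baby-Verma structure for a nonzero nilpotent $\chi'$, and invoke the classical $r=0$ irreducibility. This is more conceptual and anticipates both Proposition~\ref{DistDec2} and the general philosophy behind Theorem~\ref{Decomp}; the price is that the binomial bookkeeping needed to verify the $\Di(G_{(r)})$-decomposition in the nilpotent case is not quite ``verbatim'' from Proposition~\ref{DistDec} --- the $\ve^{(l)}$ check for $l\geq z$ genuinely requires the modified identity used in Proposition~\ref{DistDec2}. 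Two minor inaccuracies worth tidying: $F=\vf^{(p^r)}$ does not literally permute the basis of $S$ but sends $\bar v_{ap^r}\mapsto (a{+}1)\bar v_{(a+1)p^r}$ (with $\bar v_{(p-1)p^r}\mapsto -\bar v_0$), so one should pass to the basis $w_a=F^a\bar v_0$ to see $F^p=1$; and ``$N\cap S\neq 0$ by semisimplicity'' is really the statement that $N$, being a nonzero submodule of the isotypic module $L_r(\lambda)^{\oplus p}$, has nonzero $\lambda$-weight space, which equals $N\cap S$ because the $\Di(T_{(r)})$-weights in $L_r(\lambda)$ are distinct (using $p>2$).
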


\begin{proof}
	It is enough to show that the quotient module $L^r_\chi(\lambda)\coloneqq Z^r_\chi(\lambda)/M^r_\chi(\lambda)$ is irreducible. From the above description it is clear that $L^r_\chi(\lambda)$ has as basis the images under the quotient map of the elements 
	$$\{v_k\,\vert\,\lambda(\binom{\vh}{z})\neq 0\,\,\mbox{where}\,\,k=ap^r+z \,\,\mbox{with}\,\,0\leq z<p^r, 0\leq a<p\}.$$
	We shall abuse notation to denote by $v_k$ both the element in $Z^r_\chi(\lambda)$ and its image in the quotient.
	
	Let $v_k$ be an element of this basis, with $k=ap^r+z$ for $0\leq z<p^r$, $0\leq a<p$. Then we have 
	$$\ve^{(z)}v_k=\sum_{t=0}^{z}\lambda(\binom{\vh}{t})\binom{-ap^r}{z-t}v_{ap^r}.$$
	We can calculate that $\binom{-ap^r}{z-t}=(-1)^{z-t}\binom{ap^r+z-t-1}{z-t}$. Hence, for $z\neq t$, we have
	$$\binom{ap^r+z-t-1}{z-t}+\binom{ap^r+z-t-1}{z-t-1}=\binom{ap^r+z-t}{z-t}.$$
	Using Lucas' Theorem, since $z,t<p^r$, we get that $\binom{ap^r+z-t-1}{z-t-1}=1$ and $\binom{ap^r+z-t}{z-t}=1$. This tells us that $\binom{ap^r+z-t-1}{z-t}=0$ when $z\neq t$, and when $z=t$ we clearly get $\binom{ap^r+z-t-1}{z-t}=1$. 
	Hence, $$\ve^{(z)}v_k=\lambda(\binom{\vh}{z})v_{ap^r}\neq 0.$$
	Now, let $N$ be a non-zero submodule of $L^r_\chi(\lambda)$. There hence exists a non-zero element
	$$v=\sum \alpha_t v_t\in N$$
	where the sum is over all $0\leq t<p^{r+1}$ with $\lambda(\binom{\vh}{z})\neq 0$ when $t=ap^r+z$ with $0\leq z<p^r$. Suppose $s=ap^r+z$ with $0\leq z<p^r$ and $0\leq a<p$ is the largest such integer with $\alpha_s\neq 0$. Then we have that the term of $\ve^{(z)}v\in N$ that has the largest index and non-zero coefficient is of the form $v_{ap^r}$ (this has non-zero coefficient since $\lambda(\binom{\vh}{z})\neq 0$). Then either our new vector has a $v_0$ constituent or we can get that $(\vf^{(p^r)})^{p-a}v\in N$ has a $v_0$ constituent (since $\chi(\vf^{(p^r)})^p=1$).
	
	Suppose our vector still contains a term whose index is not divisible by $p^r$. If the new largest term of our vector is divisible by $p^r$, apply $\vf^{(p^r)}$ until the largest term is not divisible by $p^r$, say it has remainder $y<p^r$. Applying $\ve^{(y)}$ will remove the $v_0$ term and make the index of the largest term divisible by $p^r$. Our new vector has fewer terms. 
	
	We can keep applying this process until all the terms in  our vector are divisible by $p^r$, since at each step we are decreasing the number of terms and our vector has only finitely many terms to start with. Furthermore, we never make the largest term zero when we apply our steps. Hence, our final vector cannot be zero. Finally, since $N$ is a submodule, our final vector lies inside $N$. Hence we get that $N$ contains an element of the form 
	$$v=\sum_{i=0}^{p-1}\alpha_iv_{ip^r},$$
	where $\alpha_i\in\bK$ are not all zero. In fact, it is easy to see that we may assume $\alpha_0=1$ by applying powers of $\vf^{(p^r)}$ and rescaling. We can further assume that $\lambda(\binom{\vh}{ip^r})=0$ for all $i\neq 0$ with $\alpha_i\neq 0$ by applying $\ve^{(p^r)}$ enough times.
	
	One can calculate that $\binom{\vh}{p^r}v_{ip^r}=(\lambda(\binom{\vh}{p^r})-2i)v_{ip^r}$ and hence we get
	$$\binom{\vh}{kp^r}v_{ip^r}=(\lambda(\binom{\vh}{p^r})-2i)(\lambda(\binom{\vh}{p^r})-2i-1)\ldots(\lambda(\binom{\vh}{p^r})-2i-k+1)v_{ip^r}.$$
	By applying $\binom{\vh}{kp^r}$ for sufficiently large $k$, we therefore (for $p\neq 2$) end up with a vector $w\in N$ which is a non-zero scalar multiple of a basis element whose index is divisible by $p^r$. Applying $\vf^{(p^r)}$ enough times, we get that $v_0\in N$ and therefore get that $N$ is the whole module.
	
	Hence, we get that $M^r_\chi(\lambda)$ is irreducible.
	
\end{proof}


\begin{prop}\label{niluniq}
	$M^r_\chi(\lambda)$ is the unique maximal submodule of $Z^r_\chi(\lambda)$.
\end{prop}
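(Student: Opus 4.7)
The plan is to establish a weight-space decomposition of $Z^r_\chi(\lambda)$ with respect to the commutative semisimple subalgebra $\Di(T_{(r+1)}) \subset \Uprc$, which reduces the uniqueness question to a short case analysis.

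Since $\chi$ is nilpotent, $\chi$ vanishes on $\fh$, so $U_\chi^{[r]}(T)$ coincides with $\Di(T_{(r+1)})$, and this is semisimple by Example~\ref{multi}. By Equation~(\ref{hact2}) each basis element $v_k$ is a $\Di(T_{(r+1)})$-eigenvector, and its eigencharacter is $\lambda - 2k$, viewed as an element of the character group of the Frobenius kernel $T_{(r+1)}$; this makes sense because in the nilpotent case one has $\lambda(\binom{\vh}{p^j}) \in \bF_p$ for all $0 \leq j \leq r$, so the restriction of $\lambda$ to $\Di(T_{(r+1)})$ is a genuine character. Because $p > 2$, the integer $2$ is invertible modulo $p^{r+1}$, so the map $k \mapsto \lambda - 2k$ is a bijection on $\{0, 1, \ldots, p^{r+1}-1\}$; hence the lines $\bK v_k$ are pairwise distinct one-dimensional weight spaces, and every $\Uprc$-submodule of $Z^r_\chi(\lambda)$ is automatically a direct sum of some collection of them.

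Now let $N$ be a maximal submodule with $N \neq M^r_\chi(\lambda)$. Since $v_0$ generates $Z^r_\chi(\lambda)$ under the $\vf^{(l)}$-action, $v_0 \notin N$. As $N$ and $M^r_\chi(\lambda)$ are distinct maximal submodules, neither contains the other, so by the weight decomposition some $v_k \in N$ satisfies $v_k \notin M^r_\chi(\lambda)$. Write $k = ap^r + z$ with $0 \leq z < p^r$, $0 \leq a < p$ and $\lambda(\binom{\vh}{z}) \neq 0$. If $z > 0$, applying $\ve^{(z)}$ via Equation~(\ref{eact2}) and using the Lucas-type identity $\binom{-ap^r}{z-t} \equiv 0 \pmod p$ for $0 < z - t < p^r$, the expression collapses to $\ve^{(z)} v_k = \lambda(\binom{\vh}{z}) v_{ap^r}$, so $v_{ap^r} \in N$. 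If $z = 0$ we already have $v_{ap^r} \in N$, and since $v_0 \notin N$ we must have $a \geq 1$. Finally, iteratively applying $\vf^{(p^r)}$ via Equation~(\ref{fact2}), with the wrap-around provided by $(\vf^{(p^r)})^{\otimes p} = 1$ in $\Uprc$, the element $(\vf^{(p^r)})^{p-a} v_{ap^r}$ turns out to be a nonzero scalar multiple of $v_0$; hence $v_0 \in N$, a contradiction.

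The main obstacle is the weight-separation step: it depends crucially on $\lambda$ descending to a genuine character of $T_{(r+1)}$, a feature specific to the nilpotent case (this is precisely why the semisimple analogue Proposition~\ref{ssuniq} was proved by the more combinatorial triangular induction instead). Once weight separation is in hand, the Lucas-type identity for $\binom{-ap^r}{z-t}$ and the iterated action of $\vf^{(p^r)}$ are routine bookkeeping.
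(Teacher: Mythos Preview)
Your proof is correct and takes a genuinely different route from the paper's.

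The paper argues as follows: from $N+M^r_\chi(\lambda)=Z^r_\chi(\lambda)$ one extracts $v=v_0+\sum\alpha_t v_t\in N$ with all $v_t\in M^r_\chi(\lambda)$, and then applies the operators $\vf^{(k_s)},\vf^{(k_{s-1})},\ldots,\vf^{(k_1)}$ in decreasing order of index (where $k_1<\cdots<k_s$ are the integers in $\{1,\ldots,p^r-1\}$ with $\lambda(\binom{\vh}{k_i})=0$). A triangular computation shows $\vf^{(k_j)}v\equiv v_{k_j}$ modulo the span of $v_{k_{j+1}},\ldots,v_{k_s}$, so by downward induction each $v_{k_i}\in N$, whence $M^r_\chi(\lambda)\subset N$, contradicting maximality.

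Your approach front-loads the structural input: because $\chi|_{\fh}=0$, you identify $U^{[r]}_\chi(T)$ with $\Di(T_{(r+1)})$ and use its semisimplicity to get a one-dimensional weight decomposition $Z^r_\chi(\lambda)=\bigoplus_k \bK v_k$. This is the real gain --- any submodule is automatically spanned by a subset of the $v_k$, so you can pick a \emph{single} basis vector $v_k\in N\setminus M^r_\chi(\lambda)$ rather than a linear combination, and then the Lucas identity $\binom{-ap^r}{z-t}\equiv 0$ for $0<z-t<p^r$ together with the wraparound $(\vf^{(p^r)})^{\otimes p}=1$ drive you to $v_0\in N$ in two strokes. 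Both of those computations in fact already appear in the paper's proof of the \emph{preceding} Proposition~\ref{nilmax}, so you are efficiently recycling them.

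What each buys: your argument is shorter and makes transparent the role of the torus; the paper's argument is more uniform across the semisimple and nilpotent cases (it runs the same triangular $\vf^{(k_j)}$-induction in Proposition~\ref{ssuniq}) and proves the slightly stronger containment $M^r_\chi(\lambda)\subset N$ rather than just $v_0\in N$. One small remark on your closing commentary: the obstruction in the semisimple case is only to identifying weights with characters of $T_{(r+1)}$; since $U^{[r]}_\chi(T)$ is still semisimple there, a weight-separation argument is not obviously ruled out, so the paper's choice of the combinatorial method for Proposition~\ref{ssuniq} is as much stylistic uniformity as necessity.
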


\begin{proof}
	Suppose that $N$ is a maximal submodule of $Z^r_\chi(\lambda)$ distinct from $M^r_\chi(\lambda)$. 
	
	Since $M^r_\chi(\lambda) + N=Z^r_\chi(\lambda)$, there exists $v\in N$ with $v=v_0 + \sum \alpha_tv_t$, where the sum is over the set of numbers $0\leq t<p^{r+1}$ such that $t=ap^r+z$ with $0\leq a<p$ and $0\leq z<p^r$ with $\lambda(\binom{\vh}{z})=0$.
	
	Let $k_1<k_2<\ldots <k_s$ be the set of integers between $1$ and $p^r-1$ (inclusive) with $\lambda(\binom{\vh}{k_i})=0$. Recall from the proof of Lemma~\ref{nilsub} that if $z=z_0+z_1p+\ldots z_{r-1}p^{r-1} < p^r$ and $k=b_0+b_1p+\ldots+b_{r}p^r$ with $0\leq z_i,b_i<p$ for all $i$ then we have
	$$\vf^{(z)}v_k=\frac{1}{z_0!z_1!\ldots z_{r-1}!b_0!b_1!\ldots b_{r-1}!b_r!}\vf^{z_0+b_0}(\vf^{(p)})^{z_1+b_1}\ldots(\vf^{(p^{r-1})})^{z_{r-1}+b_{r-1}}(\vf^{(p^r)})^{b_r}\otimes m_0.$$
	In particular, we can compute $\vf^{(k_s)}v_{ap^r+z}$. If $z\neq 0$ and $z+k_s<p^r$ then by the maximality of $k_s$ we have $\lambda(\binom{\vh}{z+k_s})\neq 0$ and hence, as $M^r_\chi(\lambda)$ is a submodule, that $\vf^{(k_s)}v_{ap^r+z}=0$. On the other hand, if $z\neq 0$ and $z+k_s\geq p^r$ then, setting $k_{s-1}=b_0+b_1p+\ldots+b_{r}p^r$, we get $z_i+b_i\geq p$ for some $0\leq i \leq {r-1}$, and hence the above formula gives $\vf^{(k_s)}v_{ap^r+z}=0$. Therefore, (as $\lambda(\binom{\vh}{0})=1$) we get that $\vf^{(k_s)}M^r_\chi(\lambda)=0$.	We conclude that $v_{k_s}=\vf^{(k_s)}v_0=\vf^{(k_s)}v\in N$ and hence $\sum_{a=0}^{p-1}\bK v_{ap^r+k_s}\leq N$.
	
	Now, we can compute $\vf^{(k_{s-1})}v_{ap^r+z}$. If $z\neq 0$ and $z+k_s<p^r$ then by the maximality of $k_s$ we either have that  $\lambda(\binom{\vh}{z+k_{s-1}})\neq 0$ and, as $M^r_\chi(\lambda)$ is a submodule, that $\vf^{(k_{s-1})}v_{ap^r+z}=0$, or that $z+k_{s-1}=k_s$ and that $\vf^{(k_{s-1})}v_{ap^r+z}\in \bK\{v_{ap^r+k_s}\,\vert\,0\leq a<p\}\subset N$. On the other hand, if $z\neq 0$ and $z+k_{s-1}\geq p^r$ then, now setting $k_{s-1}=b_0+b_1p+\ldots+b_{r}p^r$, we get $z_i+b_i\geq p$ for some $0\leq i \leq {r-1}$, and hence the above formula gives $\vf^{(k_{s-1})}v_{ap^r+z}=0$. Therefore, (as $\lambda(\binom{\vh}{0})=1$) we get that $\vf^{(k_{s-1})}M^r_\chi(\lambda)\subset N$. We conclude that $v_{k_{s-1}}=\vf^{(k_{s-1})}v_0=\vf^{(k_{s-1})}v-\sum \alpha_t\vf^{(k_{s-1})}v_t\in N$.
	
	An inductive argument gives that $v_{k_i}\in N$ for all $1\leq i\leq s$, and hence that $M^r_\chi(\lambda)\subset N$. This contradiction tells us that $M^r_\chi(\lambda)$ is the unique maximal submodule of $Z^r_\chi(\lambda)$.
\end{proof}

Hence, we have constructed all irreducible $U_\chi^{[r]}(SL_2)$-modules in the case when $\chi\neq 0$ is nilpotent.
Given $\lambda\in\Lambda^r_\chi$, we hence get a unique irreducible $U_\chi^{[r]}(SL_2)$-module of dimension:
$$(\lambda(\vh)+1)(\lambda(\binom{\vh}{p})+1)\ldots (\lambda(\binom{\vh}{p^{r-1}})+1)p$$ where we view $\lambda(\binom{\vh}{p^i})$ as elements of the set $\{0,1,2,\ldots,p-1\}$ for $0\leq i<r$.

Again, we can also say something about the structure of these irreducible modules as $\Di(G_{(r)})$ modules.

\begin{prop}\label{DistDec2}
	Each irreducible $U_{\chi}^{[r]}(G)$-module $M$ decomposes as $\Di(G_{(r)})$-modules into a direct sum of $p$ copies of the same irreducible $\Di(G_{(r)})$-module. In particular, if $M$ is a quotient of $Z_\chi^r(\lambda)$ for $\lambda\in \Lambda_\chi^r$ then this irreducible $\Di(G_{(r)})$-module is the module $L_r(\lambda)$ defined in \cite{Jan}.
\end{prop}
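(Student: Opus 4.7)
The plan is to proceed in complete parallel with Proposition~\ref{DistDec}. Observe first that, by the second descriptions in Lemmas~\ref{sssub} and~\ref{nilsub}, the subspace $M_\chi^r(\lambda)$ has the same basis in the semisimple and nilpotent cases, so $L_\chi^r(\lambda)$ does too, and I can take the subspace $N \subset L_\chi^r(\lambda)$ spanned by the basis vectors $v_k$ with $0 \leq k < p^r$ and $\lambda(\binom{\vh}{k}) \neq 0$ --- exactly as in the semisimple proof. Since $\Di(G_{(r)})$ is generated by divided powers of order $l < p^r$, and the action of such operators on $Z_\chi^r(\lambda)$ does not involve $\chi$, the Lucas-theorem verifications in the proof of Proposition~\ref{DistDec} (that $N$ is a $\Di(G_{(r)})$-submodule, that $L_\chi^r(\lambda) = \bigoplus_{a=0}^{p-1} \vf^{(ap^r)} N$, and that $N \cong L_r(\lambda)$) transfer verbatim.

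The only genuinely new step is to check that the map $\phi_a : N \to \vf^{(ap^r)} N$, $w \mapsto \vf^{(ap^r)} w$, intertwines the action of $\ve^{(l)}$ in the subcase $l > z$, $l < p^r$, $a \geq 1$, when $v_{ap^r+z-l} \notin M_\chi^r(\lambda)$. In Proposition~\ref{DistDec} this was handled by identifying $\lambda(\binom{\vh}{ap^r+z-l}) = 0$ with $v_{ap^r+z-l} \in M_\chi^r(\lambda)$, an equivalence that fails in the nilpotent case since $\lambda(\binom{\vh}{p^r}) \in \bF_p$ now contributes to the vanishing of $\lambda(\binom{\vh}{\cdot})$. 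The main obstacle is therefore to show \emph{directly} that the scalar $\lambda(\binom{\vh - ap^r - z + l}{l})$ vanishes in this subcase.

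The hard part will be this scalar computation. The identity $\binom{y - p^r}{l} = \binom{y}{l}$ in characteristic $p$ (valid for $l < p^r$, by a Lucas argument identical to the one in the proof of Proposition~\ref{cent}) reduces the scalar to $\lambda(\binom{\vh + c}{l})$ with $c = l - z$. Applying Lucas via the paper's divided power factorisation of $\binom{\vh}{k}$ and writing $\mu := \sum_{i<r} \lambda(\binom{\vh}{p^i}) p^i$, one gets $\lambda(\binom{\vh}{k}) \equiv \binom{\mu}{k} \pmod{p}$ for $k < p^r$, and hence by Vandermonde $\lambda(\binom{\vh + c}{l}) \equiv \binom{\mu + c}{l} \pmod{p}$. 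Setting $z' = p^r - c$, the hypothesis $v_{ap^r+z-l} \notin M_\chi^r(\lambda)$ forces $z'_i \leq \mu_i$ for all $i < r$, so $\mu + c = (\mu - z') + p^r$ in $\bZ$, giving $(\mu + c)_i = \mu_i - z'_i$ for $i < r$. But $l + z' = p^r + z$ forces a carry into position $r$ in the base-$p$ addition of $l$ and $z'$; at the smallest position $k'$ where a carry is generated, $l_{k'} + z'_{k'} \geq p > \mu_{k'}$, so $l_{k'} > \mu_{k'} - z'_{k'} = (\mu + c)_{k'}$, and Lucas yields $\binom{\mu + c}{l} \equiv 0 \pmod{p}$. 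With the scalar shown to vanish, $\phi_a$ intertwines $\ve^{(l)}$, and the decomposition $L_\chi^r(\lambda) \cong L_r(\lambda)^{\oplus p}$ as $\Di(G_{(r)})$-modules follows exactly as in Proposition~\ref{DistDec}.
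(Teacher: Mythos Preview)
Your proposal is correct and follows the same overall strategy as the paper: reduce everything to Proposition~\ref{DistDec} and isolate the single problematic case, namely the $\ve^{(l)}$-intertwining for $l>z$, where the semisimple equivalence ``$\lambda(\binom{\vh}{ap^r+z-l})=0 \iff v_{ap^r+z-l}\in M_\chi^r(\lambda)$'' breaks down.

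The only real difference is how that scalar is dispatched. The paper does not prove the scalar vanishes outright; instead it uses the $\bZ_{(p)}$-form identity
\[
\lambda\!\left(\tbinom{\vh-ap^r-z+l}{l}\right)\lambda\!\left(\tbinom{\vh-(a-1)p^r}{p^r+z-l}\right)=\lambda\!\left(\tbinom{\vh-(a-1)p^r}{p^r+z}\right)\tbinom{p^r+z}{l},
\]
observes $\tbinom{p^r+z}{l}=0$, and then shows $\lambda(\tbinom{\vh-(a-1)p^r}{p^r+z-l})=\lambda(\tbinom{\vh}{p^r+z-l})$, so that the second alternative again places $v_{ap^r+z-l}$ in $M_\chi^r(\lambda)$. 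Your route---reducing $\lambda(\tbinom{\vh-ap^r-z+l}{l})$ to $\tbinom{\mu+l-z}{l}$ and killing it by a base-$p$ carry argument---is a valid and slightly more elementary alternative that avoids the $\bZ_{(p)}$-form identity. Either way the conclusion is the same.
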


\begin{proof}
	
	By Lemma~\ref{nilsub} and Propositions~\ref{nilmax} and \ref{niluniq}, we know that each irreducible $U_\chi^{[r]}(G)$-module is obtained as the unique irreducible quotient module of $Z_\chi^r(\lambda)$ for some $\lambda\in\Lambda_\chi^r$ and has as basis the images under the quotient map of
	$$\{v_k\,\vert\,\lambda(\binom{\vh}{z})\neq 0\,\,\mbox{where}\,\,k=ap^r+z \,\,\mbox{with}\,\,0\leq z<p^r, 0\leq a<p\}$$
	or equivalently those of
	$$\{v_{a_0+a_1p+\ldots+ a_{r-1}p^{r-1}+a_rp^r}\,\vert\, 0\leq a_i\leq\lambda(\binom{\vh}{p^i})\, \mbox{for all}\, i\in\{0,1,\ldots,r-1\},\,\, 0\leq a_r< p\}.$$
	We shall denote this module as $L_\chi^r(\lambda)$. We shall, as before, abuse notation to denote by $v_k$ both the element in $Z^r_\chi(\lambda)$ and its image in $L_\chi^r(\lambda)$.
	
	We proceed as in the proof of Proposition \ref{DistDec}. The only part of the proof that has to be modified for the nilpotent case is the proof that $\ve^{(l)}\phi_a(v_z)=\phi_a(\ve^{(l)}v_z)$, for $0\leq l,z<p^r$ and $1\leq a<p$. When $l<z$ the result follows as in Proposition \ref{DistDec}. When $l\geq z$ we still get that $\ve^{(l)}v_z=0$ and $\ve^{(l)}v_{ap^r+z}=\lambda(\binom{\vh-ap^r-z+l}{l})v_{ap^r + z-l}$, but now we need to observe that $\lambda(\binom{\vh-ap^r-z+l}{l})\lambda(\binom{\vh-(a-1)p^r}{p^r+z-l})=\lambda(\binom{\vh-(a-1)p^r}{p^r+z})\binom{p^r+z}{l}$ (this comes from the similar result over $\bC$, using the $\bZ_{(p)}$-form). Since $\binom{p^r+z}{l}=0$, we have that either $\lambda(\binom{\vh-ap^r-z+l}{l})=0$, in which case we are done, or $\lambda(\binom{\vh-(a-1)p^r}{p^r+z-l})=0$.
	
	We now observe that $\lambda(\binom{\vh-(a-1)p^r}{p^r+z-l})=\sum_{t=0}^{p^r+z-l}\lambda(\binom{\vh}{t})\binom{-(a-1)p^r}{p^r+z-l-t}$ and that $\binom{-(a-1)p^r}{p^r+z-l-t}=(-1)^{p^r+z-l-t}\binom{ap^r+z-l-t-1}{p^r+z-l-t}$. If $t\neq p^r+z-l$ then Lucas' Theorem gives that $\binom{ap^r+z-l-t}{p^r+z-l-t}=1$ and $\binom{ap^r+z-l-t-1}{p^r+z-l-t-1}=1$, so since $\binom{ap^r+z-l-t-1}{p^r-l-t}+\binom{ap^r+z-l-t-1}{p^r+z-l-t-1}=\binom{ap^r+z-l-t}{p^r+z-l-t}$ we get that $\binom{ap^r+z-l-t-1}{p^r-l-t}=0$ for all $t\neq p^r+z-l$. In particular, $\lambda(\binom{\vh-(a-1)p^r}{p^r+z-l})=\sum_{t=0}^{p^r+z-l}\lambda(\binom{\vh}{t})\binom{-(a-1)p^r}{p^r+z-l-t}=\lambda(\binom{\vh}{p^r+z-l})$.
	
	Hence, $\ve^{(l)}v_{ap^r+z}=\lambda(\binom{\vh-ap^r-z+l}{k})v_{ap^r + z-l}\in M^r_\chi(\lambda)$ and its image in $L^r_\chi(\lambda)$ is zero, as required. The result follows as in Proposition \ref{DistDec}.
	
\end{proof}


\subsection{Zero $\chi$}
\label{sec7.4}

Recall that $U_0^{[r]}(G)\cong \Di(G_{(r+1)})$. This case has hence been well-studied before (see, for example, Chapter II.3 in \cite{Jan}), but let us understand how the results can be derived in our context. 

As in the semisimple case, we take 
$$M^r_\chi(\lambda)\coloneqq\bK\{v_{k}\,\vert\,\lambda(\binom{\vh}{k})=0\,\mbox{and}\,\,0\leq k<p^{r+1}\}.$$
The main difference in this case is that this vector space is now equal to 
$$\{v_{a_0+a_1p+\ldots+ a_{r-1}p^{r-1}+a_rp^r}\,\vert\, a_i>\lambda(\binom{\vh}{p^i})\, \mbox{for some}\, i\in\{0,1,\ldots,r\}\}$$
since $\lambda(\binom{\vh}{p^j})\in\bF_p$ for all $0\leq j\leq r$.

The proofs in the semisimple case now work almost exactly the same in the $\chi=0$ case. Hence we get that, for each $\lambda\in\Lambda^{r}_0$, we have a unique irreducible $U_0^{[r]}(G)$-module, and this module has dimension
$$(\lambda(\vh)+1)(\lambda(\binom{\vh}{p})+1)\ldots (\lambda(\binom{\vh}{p^{r}})+1),$$
where we view $\lambda(\binom{\vh}{p^i})$ as elements of the set $\{0,1,2,\ldots,p-1\}$ for $0\leq i\leq r$.

Let us now examine the structure of the irreducible $U_\chi^{[r]}(G)$-modules when we consider them as $\Di(G_{(r)})$-modules.


\begin{prop}\label{DistDec3}
	Each irreducible $U_{\chi}^{[r]}(G)$-module decomposes as $\Di(G_{(r)})$-modules into a direct sum of $\lambda(\binom{\vh}{p^r})+1$ copies of the same irreducible $\Di(G_{(r)})$-module. In particular, if $M$ is a quotient of $Z_\chi^r(\lambda)$ for $\lambda\in \Lambda_\chi^r$ then this irreducible $\Di(G_{(r)})$-module is the module $L_r(\lambda)$ defined in \cite{Jan}.
\end{prop}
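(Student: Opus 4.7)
The plan is to follow the same template as Propositions \ref{DistDec} and \ref{DistDec2}, but modified so that the index $a$ in $\vf^{(ap^r)}$ ranges only up to $\lambda(\binom{\vh}{p^r})$ rather than up to $p-1$. Since in this subsection $\chi=0$, the identification $U^{[r]}_0(G)\cong\Di(G_{(r+1)})$ gives $\Di(G_{(r)})$ as an honest subalgebra, and the irreducible quotient $L^r_0(\lambda)$ of $Z^r_0(\lambda)$ has been shown (by the analogues of Lemmas \ref{sssub}, \ref{nilsub} and their maximality/uniqueness companions adapted above) to have basis the images of the $v_{a_0+a_1p+\ldots+a_rp^r}$ with $0\leq a_i\leq\lambda(\binom{\vh}{p^i})$ for each $0\leq i\leq r$.

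First I would define $N\subset L^r_0(\lambda)$ to be the span of the $v_{a_0+a_1p+\ldots+a_{r-1}p^{r-1}}$ with $0\leq a_i\leq\lambda(\binom{\vh}{p^i})$ for $0\leq i\leq r-1$. The explicit formulas for the action of $\ve^{(l)}$, $\binom{\vh}{l}$, $\vf^{(l)}$ with $0\leq l<p^r$ together with Lucas' Theorem (used exactly as in the proof of Proposition \ref{DistDec}) show that $N$ is stable under $\Di(G_{(r)})$. Using the well-known description of irreducibles for $\Di(G_{(r)})$ from \cite[II.3]{Jan}, together with the presence of a $\Di^+(U_{(r)})$-annihilated highest-weight vector $v_0$ of weight $\lambda|_{\Di(T_{(r)})}$, one identifies $N\cong L_r(\lambda)$.

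Next I would observe that $L^r_0(\lambda)=\bigoplus_{a=0}^{\lambda(\binom{\vh}{p^r})}\vf^{(ap^r)}N$, which follows from the basis description above (since any basis vector $v_{a_0+\ldots+a_rp^r}$ equals $\vf^{(a_rp^r)}v_{a_0+\ldots+a_{r-1}p^{r-1}}$ up to a nonzero binomial-coefficient scalar, the relevant coefficient being $1$ by Lucas). It remains to produce $\Di(G_{(r)})$-module isomorphisms $\phi_a\colon N\to \vf^{(ap^r)}N$ for each $0\leq a\leq\lambda(\binom{\vh}{p^r})$, defined on the basis by $v_z\mapsto v_{ap^r+z}$. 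Surjectivity is immediate; injectivity will follow from irreducibility of $N$ once we verify $\phi_a$ intertwines the $\Di(G_{(r)})$-action.

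The main obstacle, paralleling the analogous step in Propositions \ref{DistDec} and \ref{DistDec2}, is checking that $\ve^{(l)}\phi_a(v_z)=\phi_a(\ve^{(l)}v_z)$ for $0\leq l,z<p^r$. The cases $\vf^{(l)}$ and $\binom{\vh}{l}$ reduce to the same Lucas-type identities as in Proposition \ref{DistDec} (the binomial $\binom{-2ap^r-2z}{l-t}$ reduces modulo $p$ to $\binom{-2z}{l-t}$, and $\binom{k+l}{l}$ is unaffected by the $p^r$-digit). For $\ve^{(l)}$ with $l\leq z$ the calculation is again identical. The new work is the case $l>z$: here $\ve^{(l)}v_z=0$, while $\ve^{(l)}v_{ap^r+z}=\lambda\!\bigl(\binom{\vh-ap^r-z+l}{l}\bigr)v_{ap^r+z-l}$. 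Using the $\bZ_{(p)}$-form identity $\binom{\vh-ap^r-z+l}{l}\binom{\vh}{ap^r+z-l}=\binom{ap^r+z}{l}\binom{\vh}{ap^r+z}$, and noting that $\binom{ap^r+z}{l}=0$ for $l>z$ by Lucas, one deduces that either $\lambda\!\bigl(\binom{\vh-ap^r-z+l}{l}\bigr)=0$, or $\lambda\!\bigl(\binom{\vh}{ap^r+z-l}\bigr)=0$ so $v_{ap^r+z-l}\in M^r_0(\lambda)$ maps to zero in $L^r_0(\lambda)$; in either case $\ve^{(l)}v_{ap^r+z}=0=\phi_a(\ve^{(l)}v_z)$. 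This completes the verification that $\phi_a$ is a $\Di(G_{(r)})$-module map, hence an isomorphism $N\cong \vf^{(ap^r)}N\cong L_r(\lambda)$, yielding the claimed decomposition into $\lambda(\binom{\vh}{p^r})+1$ copies.
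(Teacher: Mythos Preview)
Your proposal is correct and follows essentially the same approach as the paper, which simply states that the proof is ``very similar to Propositions~\ref{DistDec} and~\ref{DistDec2}, adapted for the appropriate basis'' and leaves the details to the reader. You have correctly identified that the $\chi=0$ case is handled by the semisimple-case argument (Proposition~\ref{DistDec}) with the sole change that the index $a$ runs over $0\leq a\leq\lambda(\binom{\vh}{p^r})$ instead of $0\leq a<p$, and your treatment of the $\ve^{(l)}$ intertwining (including the $l>z$ case via the $\bZ_{(p)}$-form identity and Lucas) matches the paper's template.
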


\begin{proof}
	Very similar to Propositions \ref{DistDec} and \ref{DistDec2}, adapted for the appropriate basis. Details are left to the interested reader.
\end{proof}

\subsection{Classification}

Now that we know the irreducible modules for $U_\chi^{[r]}(G)$, we can reinterpret them using a different construction. Define the subalgebra $\widehat{U^{[r]}_\chi(B)}$ of $U^{[r]}(G)$ as the subalgebra generated by $\Di(G_{(r)})$ and $U_\chi^{[r]}(B)$, which has basis $\{\vf^{(i)}\binom{\vh}{k}\ve^{(j)}\,\vert\,0\leq i<p^r\,\mbox{and}\,0\leq j,k<p^{r+1}\}$. Suppose that $N$ is an irreducible $\Di(G_{(r)})$-module, coming from $\lambda_{r-1}\in\Lambda_0^{r-1}$. Note that $N$ has a unique (up to scalar multiplication) highest weight vector $v_0$ \cite[II.3.10]{Jan}, and so has a basis consisting of the non-zero elements of $\{\vf^{(i)}v_0\,\vert\,0\leq i<p^{r}\}$. By choosing an extension of $\lambda_{r-1}$ to $\lambda_r\in\Lambda_\chi^r$, we can extend the $\Di(G_{(r)})$-module structure on $N$ to a $\widehat{U^{[r]}_\chi(B)}$-module structure on $N$ by letting $\ve^{(p^r)}$ act as $0$ and $\binom{\vh}{p^r}$ act by scalar multiplication as it does on the basis elements $\{\vf^{(i)}v_0\}_{0\leq i<p^{r}}$ of the higher baby Verma modules (which depends on $\lambda_r$).

We can then define the {\em teenage Verma module} $Z_\chi^{[r]}(N,\lambda_r)$ as 
$$Z_\chi^{[r]}(N,\lambda_r)\coloneqq U_\chi^{[r]}(G)\otimes_{\widehat{U^{[r]}_\chi(B)}}N.$$


\begin{theorem}[Classification of irreducible $U_\chi^{[r]}(SL_2)$-modules]\label{Class}
	We have the following classification of irreducible $U_\chi^{[r]}(SL_2)$-modules, for $(\chi\in\fs\fl_2^{*})^{[r]}$:
	\begin{itemize}
		\item If $\chi\neq 0$ is semisimple, then the irreducible modules are the $Z_\chi^{[r]}(N,\lambda_r)$ for $N$ an irreducible $\Di(SL_{2,r})$-module with weight $\lambda_{r-1}\in\Lambda_0^{r-1}$, and $\lambda_r\in\Lambda_\chi^{r}$ extending $\lambda_{r-1}$. Furthermore, these are all non-isomorphic, so there are exactly $p^{r+1}$ non-isomorphic $U_\chi^{[r]}(SL_2)$-modules.
		\item If $\chi\neq 0$ is nilpotent, then the irreducible modules are the $Z_\chi^{[r]}(N,\lambda_r)$ for $N$ an irreducible $\Di(SL_{2,r})$-module with weight $\lambda_{r-1}\in\Lambda_0^{r-1}$, and $\lambda_r\in\Lambda_\chi^{r}$ extending $\lambda_{r-1}$. Furthermore, $Z_\chi^{[r]}(N,\lambda_r)=Z_\chi^{[r]}(M,\lambda'_r)$ if and only if $N=M$ and $\lambda_r=\lambda_r'$ or  $\lambda'_r(\binom{\vh}{p^r})=p-\lambda_r(\binom{\vh}{p^r})-2$ and $\lambda_r(\binom{\vh}{p^r})\leq p-2$ (as an element of $\{0,1,\ldots,p-1\}$), so there are exactly $p^r(\frac{p+1}{2})$ non-isomorphic $U_\chi^{[r]}(SL_2)$-modules.
		\item If $\chi=0$, every irreducible $U_0^{[r]}(SL_2)$ is the unique irreducible quotient of $Z_\chi^{[r]}(N,\lambda_r)$ for $N$ an irreducible $\Di(SL_{2,r})$-module with weight $\lambda_{r-1}\in\Lambda_0^{r-1}$ and $\lambda_r\in\Lambda_\chi^{r}$ extending $\lambda_{r-1}$.
	\end{itemize}
\end{theorem}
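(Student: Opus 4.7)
The plan is to identify each teenage Verma module $Z_\chi^{[r]}(N,\lambda_r)$ with the corresponding irreducible quotient $L_\chi^r(\lambda_r)$ from Subsections~\ref{sec7.2}--\ref{sec7.4} via a short exact sequence. Using Lucas' Theorem to see that $\vf^{(ap^r+b)}=\vf^{(ap^r)}\vf^{(b)}$ for $0\leq a<p$ and $0\leq b<p^r$, the PBW basis displays $U_\chi^{[r]}(SL_2)$ as a free right $\widehat{U^{[r]}_\chi(B)}$-module of rank $p$ with basis $\{\vf^{(ap^r)}\}_{0\leq a<p}$, so $\dim Z_\chi^{[r]}(N,\lambda_r)=p\cdot\dim N$. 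If $\lambda_{r-1}\in\Lambda_0^{r-1}$ denotes the restriction of $\lambda_r$, then $\widehat{U^{[r]}_\chi(B)}\otimes_{U^{[r]}_\chi(B)}\bK_{\lambda_r}$ is precisely the $\Di(SL_{2,r})$-baby Verma $Z_r(\lambda_{r-1})$ equipped with the extended actions of $\ve^{(p^r)}$ (which is zero here) and $\binom{\vh}{p^r}$; its unique irreducible quotient is $N$. Applying the exact functor $U_\chi^{[r]}(SL_2)\otimes_{\widehat{U^{[r]}_\chi(B)}}(-)$ to $0\to M_r(\lambda_{r-1})\to Z_r(\lambda_{r-1})\to N\to 0$ and noting the middle becomes $Z_\chi^r(\lambda_r)$ produces a canonical surjection $Z_\chi^r(\lambda_r)\twoheadrightarrow Z_\chi^{[r]}(N,\lambda_r)$ whose kernel is spanned by those $v_{ap^r+b}$ for which $\vf^{(b)}v_0\in M_r(\lambda_{r-1})$.

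Comparing with the explicit bases of $M_\chi^r(\lambda_r)$ from Subsections~\ref{sec7.2} and~\ref{sec7.3}, this kernel equals $M_\chi^r(\lambda_r)$ in both the nonzero semisimple and nonzero nilpotent cases, so $Z_\chi^{[r]}(N,\lambda_r)\cong L_\chi^r(\lambda_r)$ and in particular is irreducible. For $\chi=0$ the kernel is strictly contained in $M_0^r(\lambda_r)$, which additionally forces $a_r\leq\lambda_r(\binom{\vh}{p^r})$ on the top coefficient; hence $L_0^r(\lambda_r)$ emerges as an irreducible quotient of $Z_0^{[r]}(N,\lambda_r)$, and Lemma~\ref{Verma} guarantees that every irreducible of $U_0^{[r]}(SL_2)$ is of this form, closing the third bullet.

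For the non-isomorphism count in the semisimple case, the argument in the proof of Proposition~\ref{ssmax} shows that the highest-weight space of $L_\chi^r(\lambda_r)$ is one-dimensional, so the eigenvalues of $\binom{\vh}{p^i}$ for $0\leq i\leq r$ on a highest weight vector recover $\lambda_r$ completely, yielding the predicted $|\Lambda_\chi^r|=p^{r+1}$ isomorphism classes. The main obstacle is the equivalence in the nilpotent case: establishing that $L_\chi^r(\lambda_r)\cong L_\chi^r(\lambda'_r)$ precisely when $\lambda_{r-1}=\lambda'_{r-1}$ and either $\lambda_r=\lambda'_r$ or $\lambda'_r(\binom{\vh}{p^r})=p-\lambda_r(\binom{\vh}{p^r})-2\leq p-2$. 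The plan for the non-trivial direction is to locate a second highest weight vector in $L_\chi^r(\lambda_r)$ of weight $\lambda'_r$, built from powers of $\vf^{(p^r)}$ and exploiting $\chi(\vf^{(p^r)})^p=1$; by Frobenius reciprocity this produces a non-zero map $Z_\chi^{[r]}(N,\lambda'_r)\to L_\chi^r(\lambda_r)$, which must be an isomorphism by irreducibility of both sides and the matching dimension $p\cdot\dim N$. The pairing $a\leftrightarrow p-a-2$ mirrors the Weyl reflection for $SL_2$ underlying the standard $r=0$ equivalence in Friedlander--Parshall's work, propagated through the projection $U_\chi^{[r]}(SL_2)\to U_\chi(\fs\fl_2)$ of Section~\ref{sec4.1}; the value $a=p-1$ has no partner (it would require $a'=-1$), producing the count $p^r\cdot((p-1)/2+1)=p^r(p+1)/2$. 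That no further identifications occur follows by a highest-weight argument: distinct $\lambda_{r-1}$ force non-isomorphic $\Di(SL_{2,r})$-socles in the restricted module.
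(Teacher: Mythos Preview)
Your approach is essentially correct and close in spirit to the paper's, with one structural improvement and one minor gap.

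The transitivity-of-induction argument you use to identify $Z_\chi^{[r]}(N,\lambda_r)$ with $L_\chi^r(\lambda_r)$ is cleaner than what the paper does: the paper simply asserts that ``most of this theorem is proved in Sections~\ref{sec7.2}--\ref{sec7.4}'' and leaves the identification of the teenage Verma modules with the already-constructed irreducibles implicit. Your observation that $U_\chi^{[r]}(SL_2)\otimes_{\widehat{U^{[r]}_\chi(B)}}\bigl(\widehat{U^{[r]}_\chi(B)}\otimes_{U^{[r]}_\chi(B)}\bK_{\lambda_r}\bigr)\cong Z_\chi^r(\lambda_r)$, together with freeness of rank $p$, makes this identification transparent and explains exactly why the kernel matches $M_\chi^r(\lambda_r)$ in the two nonzero cases.

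For the isomorphism conditions the paper is more concrete than your sketch: it explicitly computes $\{v\mid \ve^{(p^r)}v=0\}$ inside the irreducible, then applies $\ve^{(z)}$ (for $z$ maximal below $p^r$ with $\lambda_r(\binom{\vh}{z})\neq 0$) to cut this down to $\bK v_0$ in the semisimple case and to $\bK v_0+\bK v_{\lambda_r(\binom{\vh}{p^r})p^r}$ in the nilpotent case. This computation does double duty: it produces the second highest weight vector you want \emph{and} shows there are no others, so the pairing $a\leftrightarrow p-a-2$ exhausts all identifications for fixed $\lambda_{r-1}$. Your final sentence only excludes further identifications across distinct $\lambda_{r-1}$ (via the $\Di(SL_{2,r})$-socle), but says nothing about why, for fixed $\lambda_{r-1}$, three or more values of $\lambda_r(\binom{\vh}{p^r})$ could not give isomorphic modules. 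You should either carry out the kernel-of-$\ve^{(p^r)}$ computation, or argue that the $\binom{\vh}{p^r}$-eigenvalues appearing on the $\Di(SL_{2,r})$-highest-weight space of $L_\chi^r(\lambda_r)$ form exactly the set $\{\lambda_r(\binom{\vh}{p^r}),\,p-\lambda_r(\binom{\vh}{p^r})-2\}$; either closes the gap.
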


\begin{proof}
	Most of this theorem is proved in Sections \ref{sec7.2}, \ref{sec7.3} and \ref{sec7.4}. All that remains is to prove the isomorphism conditions. Observe that Propositions \ref{DistDec} and \ref{DistDec2} show that for $\chi\neq 0$ we have that $Z_\chi^{[r]}(N,\lambda_r)\ncong Z_\chi^{[r]}(M,\lambda'_r)$ for distinct irreducible $\Di(G_{(r)})$-modules $N$ and $M$, independent of the choice of $\lambda_r$ and $\lambda'_r$.
	
	When $\chi\neq 0$ is semisimple, one can show using Proposition~\ref{DistDec} and methods similar to those used in the rest of this chapter, that $$\{v\in Z_\chi^{[r]}(N,\lambda_r)\,\vert\,\ve^{(p^r)}v=0\,\}=\bK\{v_i\,\vert\,i<p^r, \lambda_r(\binom{\vh}{i})\neq 0\}.$$ Letting $z$ be the largest integer less than to $p^r$ such that $\lambda_r(\binom{\vh}{z})\neq 0$, we hence have that $$\ve^{(z)}\{v\in Z_\chi^{[r]}(N,\lambda_r)\,\vert\,\ve^{(p^r)}v=0\,\}=\bK v_0.$$ In particular, $\lambda_r$ is determined by the action of the $\binom{\vh}{k}$ on this subspace.
	
	When $\chi\neq 0$ is nilpotent, fix $a=\lambda_r(\binom{\vh}{p^r})\in\bF_p=\{0,1,\ldots,p-1\}$. One can similarly show that $$\{v\in Z_\chi^{[r]}(N,\lambda_r)\,\vert\,\ve^{(p^r)}v=0\,\}=\bK\{v_i,\, v_{ap^r+i}\vert\,i<p^r, \lambda_r(\binom{\vh}{i})\neq 0\,\}.$$ Letting $z$ be the largest integer less than to $p^r$ such that $\lambda_r(\binom{\vh}{z})\neq 0$, we hence have that $$\ve^{(z)}\{v\in Z_\chi^{[r]}(N,\lambda_r)\,\vert\,\ve^{(p^r)}v=0\,\}=\bK v_0 + \bK v_{\lambda_r(\binom{\vh}{p^r})p^r}.$$ By calculation, one can check that the line $\bK v_{\lambda_r(\binom{\vh}{p^r})p^r}$ is a $U_\chi^{[r]}(B)$-module isomorphic  to $\bK_{\mu_r}$ where $\mu_r(\binom{\vh}{p^i})=\lambda_r(\binom{\vh}{p^i})$ for $i<r$ and $\mu_r(\binom{\vh}{p^r})=p-\lambda_r(\binom{\vh}{p^r})-2$. In particular, $Z_\chi^{[r]}(N,\lambda_r)\cong Z_\chi^{[r]}(N,\mu_r)$. On the other hand, the description of $\ve^{(z)}\{v\in Z_\chi^{[r]}(N,\lambda_r)\,\vert\,\ve^{(p^r)}v=0\,\}$ guarantees that these are the only possible (non-identity) isomorphisms.
	The uniqueness when $\chi=0$ is well known (see \cite{Jan}).
	
\end{proof}

\begin{cor}
	Let $(\chi\in\fs\fl_2^{*})^{[r]}$ and let $\lambda,\mu\in\Lambda_\chi^r$ with $\mu\neq \lambda$.	Keeping the notation from Subsections~\ref{sec7.2}, \ref{sec7.3} and \ref{sec7.4}, let $L_\chi^r(\lambda)$ and $L_\chi^r(\mu)$ be the corresponding irreducible $U^{[r]}_\chi(SL_2)$-modules. If $\chi$ is zero or non-zero semisimple then $L_\chi^r(\lambda)\ncong L_\chi^r(\mu)$. If $\chi$ is non-zero nilpotent, then $L_\chi^r(\lambda)\cong L_\chi^r(\mu)$ if and only if $\mu(\binom{\vh}{p^j})=\lambda(\binom{\vh}{p^j})$ for all $j<r$ and $\mu(\binom{\vh}{p^r})=-\lambda(\binom{\vh}{p^r})-2$.
\end{cor}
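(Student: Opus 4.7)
The plan is to deduce the corollary directly from Theorem~\ref{Class} by identifying each $L_\chi^r(\lambda)$ with an appropriate teenage Verma module. Write $\lambda_{r-1}\in\Lambda_0^{r-1}$ for the restriction of $\lambda\in\Lambda_\chi^r$ to $\Di_{p^r-1}^+(T)^{\overline{*}}$ (as explained after Proposition~\ref{DistDec}); this determines the simple $\Di(G_{(r)})$-module $L_r(\lambda_{r-1})$ for $G=SL_2$, which extends to a $\widehat{U_\chi^{[r]}(B)}$-module using the full datum of $\lambda$.

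I would first show that, for $\chi\neq 0$, one has $L_\chi^r(\lambda)\cong Z_\chi^{[r]}(L_r(\lambda_{r-1}),\lambda)$ as $U_\chi^{[r]}(SL_2)$-modules. Both modules have dimension $p\cdot\dim L_r(\lambda_{r-1})$ by Propositions~\ref{DistDec} and \ref{DistDec2}. The proofs of those propositions identify the $a=0$ summand of $L_\chi^r(\lambda)$ with $L_r(\lambda_{r-1})$ as a $\Di(G_{(r)})$-module; a short computation with Lucas' theorem applied to the formulas in Subsection~\ref{sec7} then verifies that $\ve^{(p^r)}$ kills this summand and that $\binom{\vh}{p^r}$ acts on it by the scalar prescribed by $\lambda$. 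Frobenius reciprocity thereupon produces a non-zero $U_\chi^{[r]}(SL_2)$-homomorphism $Z_\chi^{[r]}(L_r(\lambda_{r-1}),\lambda)\to L_\chi^r(\lambda)$, which is an isomorphism by dimension count and the irreducibility of the source (Theorem~\ref{Class}). For $\chi=0$ the same construction exhibits $L_0^r(\lambda)$ as the unique irreducible quotient of $Z_0^{[r]}(L_r(\lambda_{r-1}),\lambda)$.

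With this identification in hand, the three cases of the corollary become direct translations of Theorem~\ref{Class}. For $\chi$ nonzero semisimple, the theorem forces distinct $\lambda$'s to yield non-isomorphic teenage Verma modules, so $L_\chi^r(\lambda)\cong L_\chi^r(\mu)$ would imply $\lambda=\mu$, contradicting the hypothesis. For $\chi=0$, one additionally invokes the classical fact that $U_0^{[r]}(SL_2)=\Di(G_{(r+1)})$ and that simple $\Di(G_{(r+1)})$-modules are parameterised bijectively by the weights in $X_{r+1}(T)\leftrightarrow\Lambda_0^r$, with distinct weights giving non-isomorphic modules \cite[II.3]{Jan}. For $\chi$ nonzero nilpotent, the isomorphism criterion of Theorem~\ref{Class} --- namely $L_r(\lambda_{r-1})\cong L_r(\mu_{r-1})$ together with either $\mu=\lambda$ or $\mu(\binom{\vh}{p^r})=p-\lambda(\binom{\vh}{p^r})-2$ --- translates, once the excluded case $\mu=\lambda$ is removed and one reduces modulo $p$, precisely to the stated condition: $\mu(\binom{\vh}{p^j})=\lambda(\binom{\vh}{p^j})$ for $j<r$ and $\mu(\binom{\vh}{p^r})=-\lambda(\binom{\vh}{p^r})-2$.

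The main obstacle is the identification step, specifically the Lucas-theorem verification that $\binom{\vh}{p^r}$ acts on the $a=0$ summand of $L_\chi^r(\lambda)$ by the scalar prescribed by $\lambda$; once this is checked, the rest of the proof is a mechanical transfer of the isomorphism criteria of Theorem~\ref{Class}, paying only attention to the bookkeeping that relates the data $(N,\lambda_r)$ in the theorem to the weight $\lambda\in\Lambda_\chi^r$ in the corollary.
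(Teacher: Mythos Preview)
Your proposal is correct and follows essentially the same route as the paper, which treats the corollary as an immediate consequence of Theorem~\ref{Class} without giving a separate proof. The identification $L_\chi^r(\lambda)\cong Z_\chi^{[r]}(L_r(\lambda_{r-1}),\lambda)$ that you spell out is already implicit in the statement and proof of Theorem~\ref{Class} (which asserts that the teenage Verma modules \emph{are} the irreducible modules constructed in Subsections~\ref{sec7.2}--\ref{sec7.4}), so your explicit verification via the $a=0$ summand simply unpacks what is taken for granted there; one minor imprecision is that $\binom{\vh}{p^r}$ does not act by a single scalar on that summand but rather diagonally via the formulas of the higher baby Verma module, though this is exactly how the $\widehat{U_\chi^{[r]}(B)}$-structure on $N$ is defined, so no genuine computation is needed.
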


\subsection{Conjectures}
\label{sec8}

Based on our understanding of the case in Section~\ref{sec7} and of the $r=0$ case (see \cite{Jan2}), we can formulate some conjectures about the representation theory of $U_\chi^{[r]}(G)$.

\begin{conj}
	Let $N$ be an irreducible $\Di(G_{(r)})$-module with corresponding weight $\lambda_{r-1}\in \Lambda_0^{r-1}$. Let $\widehat{U_\chi^{[r]}(B)}$ be the subalgebra of $U^{[r]}_\chi(G)$ generated by $\Di(G_{(r)})$ and $U_\chi^{[r]}(B)$. Then each extension of $\lambda_{r-1}$ to $\lambda_r\in\Lambda_\chi^r$ determines an irreducible $\widehat{U_\chi^{[r]}(B)}$-module structure on the $\Di(G_{(r)})$-module $N$, and every irreducible $\widehat{U_\chi^{[r]}(B)}$-module restricts to an irreducible $\Di(G_{(r)})$-module.
\end{conj}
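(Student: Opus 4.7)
The plan is to deduce both statements by exploiting the fact, stated in the introduction, that $\Di(G_{(r)})$ is a normal Hopf subalgebra of $U^{[r]}(G)$, a property which transfers to $\widehat{U_\chi^{[r]}(B)}$.

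For the forward direction, given $\lambda_r\in\Lambda_\chi^r$ extending $\lambda_{r-1}$, I would extend the $\Di(G_{(r)})$-action on $N$ by prescribing the action of the remaining generators of $\widehat{U_\chi^{[r]}(B)}$: each $\ve_\alpha^{(p^r)}$ ($\alpha\in\Phi^+$) acts as zero, and each $\binom{\vh_t}{p^r}$ acts by the scalar on each $\Di(T_{(r)})$-weight space of $N$ prescribed by $\lambda_r$ via the analogue of Equation~(\ref{hact2}), exactly as in the construction of the higher baby Verma modules of Subsection~\ref{sec6.4}. Compatibility of this prescription with the commutators $[\binom{\vh_t}{p^r},\ve_\alpha^{(p^s)}]$ for $\alpha\in\Phi$, $s\leq r$, and with $[\ve_\alpha^{(p^r)},\ve_\beta^{(p^s)}]$ for $\alpha\in\Phi^+,\beta\in\Phi$, reduces to the $\bZ_{(p)}$-form computations already carried out in the proof of Proposition~\ref{cent}. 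Irreducibility of the resulting $\widehat{U_\chi^{[r]}(B)}$-module structure on $N$ is then automatic: any $\widehat{U_\chi^{[r]}(B)}$-submodule is first a $\Di(G_{(r)})$-submodule, and $N$ has no proper such.

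For the converse, let $M$ be an irreducible $\widehat{U_\chi^{[r]}(B)}$-module, necessarily finite-dimensional as $\widehat{U_\chi^{[r]}(B)}$ sits inside the finite-dimensional algebra $U_\chi^{[r]}(G)$. My plan is first to find a vector $v\in M$ killed by every $\ve_\alpha^{(p^s)}$ for $\alpha\in\Phi^+$, $0\leq s\leq r$ (such a vector exists via a Rudakov-style argument because these generators act nilpotently, using $\chi(\fn^+)=0$ together with $\xi_r(\ve_\alpha^{(p^s)})=0$ for $s<r$), which is additionally a joint eigenvector for the commutative semisimple algebra $U_\chi^{[r]}(T)$ (semisimplicity by Example~\ref{multi}). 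The PBW decomposition $\widehat{U_\chi^{[r]}(B)} = \Di(G_{(r)})\cdot U_\chi^{[r]}(B)$, together with $U_\chi^{[r]}(B)v = \bK v$, yields $M = \widehat{U_\chi^{[r]}(B)}v = \Di(G_{(r)})v$, so $M$ is $\Di(G_{(r)})$-cyclic from a ``highest-weight'' vector.

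Upgrading cyclicity to irreducibility of $M$ as a $\Di(G_{(r)})$-module I expect to be the principal obstacle. The plan is to show that every $\Di(G_{(r)})$-submodule $K\subsetneq M$ must be $\widehat{U_\chi^{[r]}(B)}$-stable, contradicting the irreducibility of $M$ over $\widehat{U_\chi^{[r]}(B)}$. The key technical input is an analysis of the joint $U_\chi^{[r]}(T)$-weight decomposition of $M$: each $\binom{\vh_t}{p^r}$ acts by a scalar on each weight space, determined by the extension $\lambda_r$ of the weight of $v$, while each $\ve_\alpha^{(p^r)}$ shifts weight spaces in a controlled way dictated by the root $\alpha$. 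Combined with normality of $\Di(G_{(r)})$, this should force any $\Di(G_{(r)})$-submodule of $M$---being a sum of joint $U_\chi^{[r]}(T)$-weight spaces---to be stable under the additional generators $\ve_\alpha^{(p^r)}$ and $\binom{\vh_t}{p^r}$, hence $\widehat{U_\chi^{[r]}(B)}$-stable. The conjectured bijection then follows, and matches the previously-established correspondence in the forward direction.
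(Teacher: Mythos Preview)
This statement is presented in the paper as a \emph{conjecture}, not a theorem: the paper offers no proof and explicitly says ``These conjectures are proved in the sequel \cite{West}.'' There is therefore no proof in the present paper to compare your proposal against.

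That said, your proposal has genuine gaps. In the forward direction, the claim that compatibility ``reduces to the $\bZ_{(p)}$-form computations already carried out in the proof of Proposition~\ref{cent}'' is not right: Proposition~\ref{cent} computes commutators of the form $[(\ve_\alpha^{(p^r)})^{\otimes p},\,\cdot\,]$, not $[\ve_\alpha^{(p^r)},\,\cdot\,]$. What you actually need is that, with $\ve_\alpha^{(p^r)}$ acting as zero, the commutator $[\ve_\alpha^{(p^r)},\ve_{-\beta}^{(p^s)}]\in\Di(G_{(r)})$ (for $\alpha,\beta\in\Phi^+$, $s<r$) already acts as zero on $N$ via the given $\Di(G_{(r)})$-action. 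This is not obvious and is where the real content lies; the cleanest route is probably to invoke the extension of $N$ to a $\Di(G_{(r+1)})$-module (as in Section~\ref{sec9}) and then twist by a one-dimensional character of the quotient $\widehat{U_\chi^{[r]}(B)}/\Di^+(G_{(r)})\widehat{U_\chi^{[r]}(B)}\cong U_\chi(\fb)$, rather than defining the action on generators by hand.

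For the converse, your argument that any $\Di(G_{(r)})$-submodule $K$ of $M$ is automatically $\widehat{U_\chi^{[r]}(B)}$-stable has a circularity. You assert that $K$ is a sum of joint $U_\chi^{[r]}(T)$-weight spaces, but a priori $K$ is only $\Di(T_{(r)})$-stable, and the $U_\chi^{[r]}(T)$-weight decomposition is finer (it involves the additional eigenvalue of $\binom{\vh_t}{p^r}$, which is precisely one of the operators whose stability you are trying to establish). One can try to repair this by arguing that on $M=\Di(G_{(r)})v$ every PBW monomial $\prod_\alpha \ve_{-\alpha}^{(k_\alpha)}v$ is already a $\binom{\vh_t}{p^r}$-eigenvector, so the two weight decompositions coincide; but this still leaves the $\ve_\alpha^{(p^r)}$-stability of $K$ unexplained, and your appeal to ``normality'' does not supply it (Hopf normality gives $\ad_l$-stability of $\Di(G_{(r)})$, not that left multiplication by $\ve_\alpha^{(p^r)}$ preserves arbitrary $\Di(G_{(r)})$-submodules). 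This is the step where the sequel's Clifford-theoretic machinery is genuinely needed.
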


A proof of this result would immediately lead to a proof of the following conjecture, an analogue of the result from the $r=0$ case that every irreducible $\fg$-module is the quotient of a baby Verma module.

\begin{conj}\label{teen}
	Every irreducible $U_\chi^{[r]}(G)$-module is a homomorphic image of $Z_\chi^{[r]}(N,\lambda_r)\coloneqq U_\chi^{[r]}(G)\otimes_{\widehat{U_\chi^{[r]}(B)}}N$ for some irreducible $\Di(G_{(r)})$-module $N$ and $\lambda_{r}\in\Lambda_\chi^{r}$ extending the weight of $N$, where $N$ is given the structure of a $\widehat{U_\chi^{[r]}(B)}$-module as in the previous conjecture.
\end{conj}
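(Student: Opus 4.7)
The plan is to emulate the classical $r=0$ argument where every irreducible $U_\chi(\fg)$-module arises as a quotient of a baby Verma module, with the one-dimensional $\bK_\lambda$ now replaced by an irreducible $\widehat{U_\chi^{[r]}(B)}$-module whose structure is governed by the preceding conjecture. Let $M$ be an irreducible $U_\chi^{[r]}(G)$-module; by Theorem~\ref{findim} it is finite-dimensional, and its restriction to $\widehat{U_\chi^{[r]}(B)}$ is thus a nonzero finite-dimensional module. I would first choose any irreducible $\widehat{U_\chi^{[r]}(B)}$-submodule $V\subseteq M$, which exists by finite length. Invoking the preceding conjecture, $V$ is isomorphic as a $\widehat{U_\chi^{[r]}(B)}$-module to an irreducible $\Di(G_{(r)})$-module $N$ of some weight $\lambda_{r-1}\in\Lambda_0^{r-1}$, equipped with the $\widehat{U_\chi^{[r]}(B)}$-structure determined by some extension $\lambda_r\in\Lambda_\chi^r$ of $\lambda_{r-1}$.

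Next, Frobenius reciprocity applies: $U_\chi^{[r]}(G)$ is a free right $\widehat{U_\chi^{[r]}(B)}$-module, as can be read off from the bases in Subsection~\ref{sec6.1}, so induction $U_\chi^{[r]}(G)\otimes_{\widehat{U_\chi^{[r]}(B)}}(-)$ is left adjoint to restriction. The inclusion $\iota\colon N\hookrightarrow M$ of $\widehat{U_\chi^{[r]}(B)}$-modules hence corresponds to a $U_\chi^{[r]}(G)$-module homomorphism
$$\phi\colon Z_\chi^{[r]}(N,\lambda_r)=U_\chi^{[r]}(G)\otimes_{\widehat{U_\chi^{[r]}(B)}}N\longrightarrow M,\qquad u\otimes n\mapsto u\cdot\iota(n).$$
Because $\iota\neq 0$, the restriction of $\phi$ to $1\otimes N$ is nonzero, so $\phi$ is nonzero; since $M$ is irreducible, $\phi$ must be surjective, exhibiting $M$ as a homomorphic image of $Z_\chi^{[r]}(N,\lambda_r)$.

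The principal obstacle is thus the preceding conjecture itself, which supplies the identification of $V$. There one must show that any irreducible $\widehat{U_\chi^{[r]}(B)}$-module restricts to an irreducible $\Di(G_{(r)})$-module and that every such restriction extends back uniquely (up to the choice of $\lambda_r$). The subtlety is that $\widehat{U_\chi^{[r]}(B)}$ is not a straightforward semidirect product of $\Di(G_{(r)})$ with a copy of $U_\chi^{[r]}(T)$ and a single extra generator $\ve^{(p^r)}$, because (as in the $\bZ_{(p)}$-form computations behind Proposition~\ref{cent}) $\binom{\vh}{p^r}$ does not commute with $\ve^{(p^s)}$ for $s<r$. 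A natural approach is a Rudakov-style argument: since $\chi$ vanishes on $\Di_{p^r}^+(U)$, one checks that $\ve^{(p^r)}$ is nilpotent in $\widehat{U_\chi^{[r]}(B)}$ (indeed $(\ve^{(p^r)})^{\otimes p}=(\ve^{(p^r)})^p=0$ by Lemma~\ref{powers} together with the divisibility of $\tfrac{p^{r+1}!}{(p^r!)^p}$ by $p$ noted in Proposition~\ref{cent}), so it acts nilpotently on any finite-dimensional module; combined with the semisimplicity of $U_\chi^{[r]}(T)$ from Example~\ref{multi}, this should allow one to split off an irreducible $\Di(G_{(r)})$-submodule on which $\ve^{(p^r)}$ acts as zero and $\binom{\vh}{p^r}$ acts by the scalar prescribed by $\lambda_r$. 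Establishing this rigorously, together with checking uniqueness of the extension, is the crux of the matter.
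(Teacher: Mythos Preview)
The statement you are proving is not proved in this paper at all: it is explicitly labelled a \emph{Conjecture}, and the text immediately following the two conjectures says only that ``A proof of this result would immediately lead to a proof of the following conjecture'' and that ``These conjectures are proved in the sequel \cite{West}.'' So there is no proof in the paper to compare against, only the remark that Conjecture~\ref{teen} follows at once from the preceding conjecture.

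Your reduction is exactly that ``immediate'' implication. Given the previous conjecture, picking an irreducible $\widehat{U_\chi^{[r]}(B)}$-submodule of $M$ and applying Frobenius reciprocity for the induction functor $U_\chi^{[r]}(G)\otimes_{\widehat{U_\chi^{[r]}(B)}}(-)$ is the standard argument, parallel to Lemma~\ref{Verma}, and it is what the paper has in mind. You also correctly isolate the genuine content as lying in the preceding conjecture: one needs to know that every irreducible $\widehat{U_\chi^{[r]}(B)}$-module restricts to an irreducible $\Di(G_{(r)})$-module and arises from one of the prescribed $\lambda_r$-extensions. Your sketch of a Rudakov-type attack on this (nilpotence of the $\ve_\alpha^{(p^r)}$, semisimplicity of $U_\chi^{[r]}(T)$) is reasonable heuristics, but the paper does not attempt it here and defers the argument to \cite{West}; in particular, your final paragraph is a proof \emph{strategy} for the first conjecture rather than a proof, and you should be clear that the present paper treats both statements as open.
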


These conjectures are proved in the sequel \cite{West}.

\section{Hopf Algebra Structure}\label{sec9}

\subsection{Hopf subalgebra structure}

Corollary~\ref{Hopf} tells us that, much like the universal enveloping algebra and distribution algebra, the higher universal enveloping algebras $\Upr$ have the structure of cocommutative Hopf algebras. This Hopf-algebraic structure is not used substantially in the rest of this paper, but is a key focus in the sequel \cite{West}. Nonetheless, in the case of reductive groups, it is worthwhile to take a moment and use the results of this paper to derive some Hopf-algebraic properties of the higher universal enveloping algebras.

We start with some important observations.

\begin{lemma}
	For a reductive algebraic group $G$, the algebra $\Upr$ satisfies the following properties:
	\begin{enumerate}
		\item $\Di(G_{(r)})$ is a normal Hopf subalgebra of $\Upr$.
		\item $\Upr$ is free as a left and right $\Di(G_{(r)})$-module.
		\item $\Upr$ is faithfully flat as a left and right $\Di(G_{(r)})$-module.
		\item $\Upr/\Di^{+}(G_{(r)})\Upr$ is isomorphic to the Hopf algebra $U(\fg)$.
		\item $\Di(G_{(r)})\subset\Upr$ is a $U(\fg)$-Galois extension, with $\Di(G_{(r)})=\Upr^{co U(\fg)}$.
	\end{enumerate}
\end{lemma}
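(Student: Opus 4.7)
The plan is to prove the five properties in a cascading order, making essential use of the PBW-type basis from Proposition~\ref{basis} and the Hopf algebra homomorphism $\Upsilon_{r,r}\colon \Upr \to U(\fg)$ from Proposition~\ref{Frob} and Corollary~\ref{surj}. I would in fact prove property~(4) first, since it will feed into (1), and then deduce (2), (3), and (5) in turn.

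\textbf{Step 1: Property (4).} By Proposition~\ref{Frob} and Corollary~\ref{surj}, $\Upsilon_{r,r}\colon \Upr \twoheadrightarrow U(\fg)$ is a surjective Hopf algebra homomorphism. I would identify its kernel with $\Di^+(G_{(r)})\Upr$ by a direct basis computation: since $\Xi_r(\ve_\alpha^{(p^s)})=0$ and $\Xi_r(\binom{\vh_t}{p^s})=0$ for $0\leq s<r$, while $\Xi_r(\ve_\alpha^{(p^r)})=\ve_\alpha$ and $\Xi_r(\binom{\vh_t}{p^r})=\binom{\vh_t}{1}$, the PBW basis of $\Upr$ decomposes into (a) elements all of whose divided-power indices have only a $p^r$-coefficient, which map to a PBW basis of $U(\fg)$ since $(\ve_\alpha^{(p^r)})^a$ corresponds to $a!\,\ve_\alpha^{(a)}$ for $0\leq a<p$, and (b) elements involving some lower divided power, which are manifestly in $\Di^+(G_{(r)})\Upr$. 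A standard straightening argument then shows that (b) spans $\Di^+(G_{(r)})\Upr$, yielding the Hopf algebra isomorphism $\Upr/\Di^+(G_{(r)})\Upr \cong U(\fg)$.

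\textbf{Step 2: Properties (2) and (3).} I would exhibit an explicit $\bK$-subspace
$$V \coloneqq \bK\Big\{\prod_{\alpha\in\Phi^+}(\ve_\alpha^{(p^r)})^{a_\alpha}\prod_{t=1}^d\binom{\vh_t}{p^r}^{b_t}\prod_{\alpha\in\Phi^+}(\ve_{-\alpha}^{(p^r)})^{c_\alpha} \,\,;\,\, 0\leq a_\alpha,b_t,c_\alpha<p\Big\}$$
and show that the multiplication maps $V\otimes_\bK \Di(G_{(r)}) \to \Upr$ and $\Di(G_{(r)}) \otimes_\bK V \to \Upr$ are linear bijections. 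The key input is that the commutators between the ``top-level'' generators $\ve_{\pm\alpha}^{(p^r)}$, $\binom{\vh_t}{p^r}$ and any element of $\Di(G_{(r)})$ lie inside $\Di(G_{(r)})$, which follows from the $\bZ_{(p)}$-form calculations already performed in the proof of Proposition~\ref{cent} (the relevant sums of grades stay strictly below $p^{r+1}$ and the resulting divided-power indices stay below $p^r$). This lets me rewrite any PBW basis element as an element of $V$ times an element of $\Di(G_{(r)})$ (or vice versa), and linear independence follows from Proposition~\ref{basis}. Faithful flatness is then automatic from freeness.

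\textbf{Step 3: Property (1).} By Step 1, $\Di^+(G_{(r)})\Upr$ is a Hopf ideal, being the kernel of a Hopf algebra homomorphism. Applying the antipode $S$, which stabilises both $\Di(G_{(r)})$ and $\Upr$, gives $\Upr\Di^+(G_{(r)})=S(\Di^+(G_{(r)})\Upr) = \Di^+(G_{(r)})\Upr$; this equality $B^+A=AB^+$ for $B=\Di(G_{(r)})$ and $A=\Upr$ is precisely the standard criterion for $B$ to be a normal Hopf subalgebra of $A$.

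\textbf{Step 4: Property (5).} With (1), (3) and (4) in hand, we have a short exact sequence of Hopf algebras $\bK\to \Di(G_{(r)}) \to \Upr \to U(\fg) \to \bK$ in which $\Upr$ is faithfully flat over the normal Hopf subalgebra $\Di(G_{(r)})$. Schneider's theorem on Hopf algebra extensions (see, e.g., \cite[Thm.~2.4]{Schneider} or the classical references) then yields that $\Di(G_{(r)}) \subset \Upr$ is a $U(\fg)$-Galois extension with $\Di(G_{(r)}) = \Upr^{coU(\fg)}$.

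\textbf{Main obstacle.} The principal technical work sits in Step~2: exhibiting the explicit factorisation through $V\otimes \Di(G_{(r)})$ requires a careful induction on the PBW basis, using the commutation relations to push all ``top-level'' generators to the left (or right) past elements of $\Di(G_{(r)})$, while keeping track of the lower-order correction terms, which must themselves be shown to stay inside $\Di(G_{(r)})$. Everything else is then either a direct computation or an invocation of standard Hopf algebra machinery.
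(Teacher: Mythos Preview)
Your argument is correct but follows a different logical order from the paper's, and trades a short structural observation for a longer explicit computation.

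The paper proves (1) first, by a direct two-line argument: since $\Upr$ is cocommutative it suffices to check $\ad_l(\delta)(\mu)\in\Di(G_{(r)})$ for generators $\delta\in\Di_{p^r}(G)$ and $\mu\in\Di_{p^r-1}(G)$; but the filtration degrees in $\sum\delta_{(1)}\otimes\mu\otimes S(\delta_{(2)})$ sum to at most $2p^r-1<p^{r+1}$, so the tensor products collapse to multiplication in $\Di(G)$, and one simply inherits normality from the known normality of $\Di(G_{(r)})$ in $\Di(G)$. With (1) established, the paper obtains (2) \emph{for free} by citing Schneider's freeness theorem \cite[Thm.~2.1(2)]{Sch2}, completely bypassing your explicit PBW straightening in Step~2. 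Parts (3), (4), (5) are then handled as you do.

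Your route---proving (4) first, then (2)/(3) by an explicit $V\otimes\Di(G_{(r)})$ factorisation, then deducing (1) from $B^+A=AB^+$---also works, but be aware of one subtlety in Step~3: the equality $\Di^+(G_{(r)})\Upr=\Upr\Di^+(G_{(r)})$ is not literally the paper's \emph{definition} of normality, which is stability under the adjoint action. The two notions are equivalent here (e.g.\ via faithful flatness and Takeuchi/Schneider, or because cocommutative Hopf algebras over an algebraically closed field are pointed), but you should state and justify this equivalence rather than calling it ``the standard criterion''. The paper's filtration argument for (1) avoids this issue entirely and is much shorter; what your approach buys is that (2) becomes independent of any cited black box.
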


\begin{proof}
	Recall that a Hopf subalgebra $B$ of a Hopf algebra $A$ is said to be \emph{normal} if $\ad_l(a)(b)\in B$ and $\ad_r(a)(b)\in B$ for all $a\in A$ and $b\in B$ where, using Sweedler's $\Sigma$-notation, $$\ad_l(a)(b)=\sum a_{(1)}bS(a_{(2)}),\quad\,\,\,\,\ad_r(a)(b)=\sum S(a_{(1)})ba_{(2)}.$$
	
	Since $\Upr$ is cocommutative, these two conditions are equivalent, so it is enough to prove closure under the left adjoint. Since $\ad_l(aa')(b)=\ad_l(a)\ad_l(a')(b)$ and $\ad_l(a)(bb')=\sum (\ad_l(a_{(1)})b)(\ad_l(a_{(2)})b')$ for $a,a'\in A$ and $b,b'\in B$, it is enough to show closure for generators of $A$ and $B$. When $G$ is reductive, $\Di(G_{(r)})\subset \Di(G)$ is generated by $\Di_{p^r-1}(G)$ and $\Upr$ is generated by $\Di_{p^r}(G)$. Let $\delta\in \Di_{p^r}(G)$ and $\mu\in \Di_{p^r-1}(G)$. Then
	$$\ad_l(\delta)(\mu)=\sum \delta_{(1)}\otimes \mu\otimes S(\delta_{(2)}),$$
	where the $\otimes$ represents the multiplication in $\Upr$, and we have $\delta_{(1)}\in\Di_i(G)$, $\delta_{(2)}\in\Di_j(G)$ with $i+j=p^r$. In particular, $i+p^r-1+j<p^{r+1}$ and so in fact 
	$$\ad_l(\delta)(\mu)=\sum \delta_{(1)} \mu S(\delta_{(2)}),$$
	with the multiplication now in $\Di_{p^{r+1}-1}(G)$, the restriction of the multiplication in $\Di(G)$. Since $\Di(G_{(r)})$ is normal in $\Di(G)$ \cite[I.7.18]{Jan}, we hence conclude that $\ad_l(\delta)(\mu)\in\Di(G_{(r)})$. This proves (1).
	
	Part (2) then follows from Theorem 2.1(2) in \cite{Sch2}, and (3) follows from (2). Furthermore, (4) is easy to see from the results of Section~\ref{sec4}, and (5) follows from Remark 1.1(4) in \cite{Sch}.
\end{proof}

Recall that $X(T)=\Hom(T,\bG_m)$ is the character group of $T$, where $\bG_m$ is the multiplicative group and $T$ is a maximal torus of $G$. Let $Y(T)=\Hom(\bG_m,T)$ be the cocharacter group of $G$. Then, as in \cite[II.1.3]{Jan}, there exists a bilinear pairing $X(T)\times Y(T)$ given by $(\lambda, \mu)\mapsto \langle\lambda,\mu\rangle$, where $\langle\lambda,\mu\rangle$ is the integer corresponding to $\mu\circ\lambda\in\End(\bG_m)=\bZ$.

As always, we have $\Phi$ is the root system of $G$ with respect to $T$, and we choose $\Phi^{+}$ a system of positive roots and $\Pi$ a set of simple roots inside $\Phi^{+}$. Given $\alpha\in \Phi$, we define $\alpha^{\nu}\in Y(T)$ be the coroot of $\alpha$. We hence define
$$X(T)_{+}\coloneqq\{\lambda\in X(T)\vert\,\langle \lambda,\alpha^{\nu}\rangle\geq 0\;\mbox{for all}\;\alpha\in\Pi\}$$
to be the set of {\em dominant weights} of $T$ with respect to $\Phi^{+}$ and, for $r\geq 1$, we set 
$$X_r(T)\coloneqq\{\lambda\in X(T)\vert\,0\leq\langle \lambda,\alpha^{\nu}\rangle< p^r \;\mbox{for all}\;\alpha\in\Pi\}.$$

Throughout this section we shall make the assumption that the abelian group $X(T)/p^rX(T)$ has a set of representatives $X_r'(T)$ with $X_r'(T)\subset X_r(T)$. We shall call this assumption (R).

Now, suppose $N$ is an irreducible left $\Di(G_{(r)})$-module and $M$ is an irreducible left $U^{[r]}(G)$-module. Since $N$ is an irreducible left $\Di(G_{(r)})$-module it is a left $\Di(G_{(r+1)})$-module by Proposition II.3.15 in \cite{Jan} (using assumption (R), the fact that $X_r(T)\subset X_{r+1}(T)$, and the fact that the irreducible $\Di(G_{(r)})$-modules are indexed by $X'_r(T)$ - see \cite[II.3.10]{Jan}). Hence, as $U^{[r]}(G)$ surjects onto $\Di(G_{(r+1)})$, $N$ can be extended to a $U^{[r]}(G)$-module.

We can also define a left $U^{[r]}(G)$-module structure on $\Hom_D(N,M)$ as follows (defining $D\coloneqq \Di(G_{(r)})$ for ease of notation):
$$x\cdot\phi:n\mapsto \sum x_{(1)}\phi(S(x_{(2)})n)\qquad \mbox{for}\quad x\in U^{[r]}(G),\,n\in N,\, \phi\in \Hom_{\tiny D}(N,M),$$
where here we are using the $U^{[r]}(G)$-module structure on $N$ defined in the previous paragraph. It is a straightforward calculation that this makes $\Hom_{\tiny D}(N,M)$ into a $U^{[r]}(G)$-module, and that the ideal $\Upr\Di^{+}(G_{(r)})$ acts trivially upon it. Hence, $\Hom_{\tiny D}(N,M)$ has the structure of a module over  $$U(\fg)=\frac{\Upr}{\Upr\Di^{+}(G_{(r)})}.$$

Putting these two observations together and again using the Hopf algebra structure of $\Upr$, we can define a $U^{[r]}(G)$-module structure on $N\otimes\Hom_{\tiny \Di(G_{(r)})}(N,M)$. Furthermore, if $x\in\Di(G_{(r)})$, $n\in N$ and $\phi\in \Hom_{\tiny D}(N,M)$, then 
\begin{equation}\label{GrAct}x\cdot(n\otimes \phi)=\sum x_{(1)}n\otimes x_{(2)}\phi=\sum x_{(1)}n\otimes \epsilon(x_{(2)})\phi=(\sum x_{(1)}\epsilon(x_{(2)})n)\otimes \phi)=xn\otimes \phi,\end{equation}
using here that elements of $\Di(G_{(r)})$ act on $\Hom_{\tiny D}(N,M)$ via $\epsilon$, the counit. So we see that the $\Upr$-module structure on $N\otimes\Hom_{\tiny \Di(G_{(r)})}(N,M)$ restricts to the $\Di(G_{(r)})$-module structure on copies of $N$.

The following result should be compared to Propositions ~\ref{DistDec}, \ref{DistDec2} and \ref{DistDec3} above.

\begin{theorem}\label{Decomp}
	Suppose assumption (R) holds. Let $M$ be an irreducible $U^{[r]}(G)$-module. Then there exists an irreducible $\Di(G_{(r)})$-module $N$ such that $M\cong N\otimes\Hom_{\tiny \Di(G_{(r)})}(N,M)$ as $U^{[r]}(G)$-modules.
\end{theorem}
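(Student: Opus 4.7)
The plan is to prove the theorem through a Clifford-theory style argument built around a natural evaluation map. Since $M$ is finite-dimensional by Theorem~\ref{findim} and nonzero, it contains a nonzero irreducible $\Di(G_{(r)})$-submodule; let $N$ denote such a submodule, viewed abstractly as a $\Di(G_{(r)})$-module. Using assumption~(R) and the discussion preceding the theorem, we extend $N$ to a $U^{[r]}(G)$-module; this extended structure need not be compatible with any $U^{[r]}(G)$-action on the original copy $N \subset M$ (indeed $N$ is typically not $U^{[r]}(G)$-stable in $M$).

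The central object is the evaluation map
$$\mathrm{ev}\colon N \otimes \Hom_{\Di(G_{(r)})}(N, M)\longrightarrow M,\qquad n\otimes\phi\longmapsto \phi(n),$$
where the tensor product carries the $U^{[r]}(G)$-module structure defined just before the theorem statement. By Equation~(\ref{GrAct}) the map is automatically $\Di(G_{(r)})$-linear. I would next verify, using cocommutativity of $U^{[r]}(G)$, that $\mathrm{ev}$ is in fact $U^{[r]}(G)$-linear: unwinding definitions yields
$$\mathrm{ev}(x\cdot(n\otimes\phi)) = \sum x_{(2)}\phi(S(x_{(3)})x_{(1)}n),$$
and cocommutativity permits a free permutation of the three Sweedler factors so that this equals $\sum x_{(1)}\phi(S(x_{(2)})x_{(3)}n)$, which reduces via the antipode identity $\sum S(x_{(2)})x_{(3)} = \epsilon(x_{(2)})\cdot 1$ and the counit axiom to $x\phi(n)$.

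With $U^{[r]}(G)$-linearity in hand, the image of $\mathrm{ev}$ is a $U^{[r]}(G)$-submodule of $M$ containing $N$ (take $\phi$ to be the inclusion $N\hookrightarrow M$), so by irreducibility of $M$ the map $\mathrm{ev}$ is surjective. Since $N\otimes\Hom_{\Di(G_{(r)})}(N,M)$ is a direct sum of copies of $N$ as a $\Di(G_{(r)})$-module (the action affects only the $N$-factor, again by Equation~(\ref{GrAct})), the same holds for any quotient; in particular $M|_{\Di(G_{(r)})}$ is semisimple and $N$-isotypic.

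Finally, since $\bK$ is algebraically closed and $N$ is irreducible, Schur's lemma gives $\End_{\Di(G_{(r)})}(N)=\bK$, and the standard isotypic-decomposition statement then identifies $M$ with $N \otimes \Hom_{\Di(G_{(r)})}(N, M)$ as $\Di(G_{(r)})$-modules via the very same evaluation map. In particular the dimensions match, so the surjective $U^{[r]}(G)$-linear map $\mathrm{ev}$ is a bijection, and hence the required isomorphism of $U^{[r]}(G)$-modules. I expect the most delicate step to be the Sweedler-notation verification of $U^{[r]}(G)$-linearity of $\mathrm{ev}$: the computation itself is short, but it is notationally easy to mishandle which Sweedler index corresponds to the outer action, which to the antipode, and which to the inner action, so one must invoke cocommutativity carefully.
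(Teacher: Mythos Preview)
Your proposal is correct and follows essentially the same approach as the paper: both choose an irreducible $\Di(G_{(r)})$-submodule $N$ of $M$, define the evaluation map $n\otimes\phi\mapsto\phi(n)$, observe it is a surjective $U^{[r]}(G)$-homomorphism onto the irreducible $M$, and then deduce bijectivity by a dimension count coming from the $N$-isotypic decomposition of $M$ as a $\Di(G_{(r)})$-module. The paper merely asserts the $U^{[r]}(G)$-linearity of the evaluation map as ``straightforward,'' whereas you supply the Sweedler computation using cocommutativity explicitly; otherwise the arguments are identical.
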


\begin{proof}
	Let $N$ be an irreducible $\Di(G_{(r)})$-submodule of $M$. As above can then equip $N\otimes\Hom_{\tiny D}(N,M)$ with the structure of a $\Upr$-module.
	
	We define the map $$\Psi:N\otimes\Hom_{\tiny D}(N,M)\to M,\qquad \,\Psi(n\otimes \phi)=\phi(n).$$
	It is straightforward to check that this is a homomorphism of $\Upr$-modules. Since $M$ is irreducible, it is clearly surjective. Hence, using Equation (\ref{GrAct}), as $\Di(G_{(r)})$-modules $$M\cong\bigoplus_{i=1}^k N,$$
	for some $k\in\bN$. In particular, this implies that $\Hom_{\tiny D}(N,M)\cong\bK^{k}$ and so $\dim_{\bK}(M)=k\dim_{\bK}(N)$. Furthermore, $\dim_{\bK}(N\otimes \Hom_{\tiny D}(N,M))=k\dim_{\bK}N$. Hence, $\Psi$ is an isomorphism.
\end{proof}

So Theorem~\ref{Decomp} shows that an irreducible $U^{[r]}(G)$-module can be decomposed into an irreducible $\Di(G_{(r)})$-module and a $U(\fg)$-module.

There is another way to obtain this result. This alternative method shall be more useful in the sequel \cite{West}, and is inspired by papers of Schneider \cite{Sch} and Witherspoon \cite{Wit}.

\begin{lemma}\label{tens}
	Suppose assumption (R) holds. Let $N$ be an irreducible left $\Di(G_{(r)})$-module, and define the algebra $E\coloneqq\End_{\Upr}(\Upr\otimes_{\tiny D}N)^{op}$. Let $U$ be an irreducible left $E$-module. Then $N\otimes_{\bK} U$ can be given a left $U^{[r]}(G)$-module structure which restricts to the natural left $\Di(G_{(r)})$-module structure.
\end{lemma}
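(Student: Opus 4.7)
The plan is to construct the $\Upr$-action on $N \otimes_\bK U$ via a formula involving the coproduct of $\Upr$ and the projection $\pi\colon \Upr \twoheadrightarrow U(\fg) \cong \Upr/\Upr\Di^+(G_{(r)})$ from the preceding lemma. To make the formula well-defined I need two ingredients: an extension of $N$ (where $D \coloneqq \Di(G_{(r)})$) to a $\Upr$-module, and an algebra embedding $U(\fg) \hookrightarrow E$ through which $U(\fg)$ can act on $U$.

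For the first ingredient I invoke the discussion immediately preceding the lemma: assumption (R), combined with $X_r(T) \subset X_{r+1}(T)$, extends $N$ to a $\Di(G_{(r+1)})$-module and hence, via the surjection $\Upr \twoheadrightarrow \Di(G_{(r+1)})$, to a $\Upr$-module. Fixing such an extension, a Hopf-module calculation produces a $\Upr$-module isomorphism
$$\Upr \otimes_D N \xrightarrow{\sim} U(\fg) \otimes_\bK N, \qquad a \otimes n \longmapsto \sum \pi(a_{(1)}) \otimes a_{(2)} n,$$
where the right-hand side carries the diagonal action $a \cdot (h \otimes n) = \sum \pi(a_{(1)}) h \otimes a_{(2)} n$, and the inverse is $h \otimes n \mapsto \sum \tilde h_{(1)} \otimes S(\tilde h_{(2)}) n$ for a lift $\tilde h$ of $h$. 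Well-definedness of both maps rests on $\pi(d) = \epsilon(d) \cdot 1$ for $d \in D$ (since $d - \epsilon(d) \in D^+$), together with the Hopf-subalgebra property of $D \subset \Upr$, which allows $D$-factors to be moved across the tensor product.

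For the second ingredient, each $c \in U(\fg)$ yields an endomorphism $\phi_c \in \End_\Upr(U(\fg) \otimes N)$ defined by $\phi_c(h \otimes n) = hc \otimes n$: the diagonal $\Upr$-action commutes with right multiplication on the first factor, and $\phi_{c_1} \circ \phi_{c_2} = \phi_{c_2 c_1}$, so $c \mapsto \phi_c$ defines an injective algebra homomorphism $\iota\colon U(\fg) \hookrightarrow \End_\Upr(\Upr \otimes_D N)^{op} = E$. Restriction along $\iota$ then endows $U$ with the structure of a left $U(\fg)$-module.

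With both pieces in place, define the $\Upr$-action on $N \otimes U$ by
$$a \cdot (n \otimes u) \coloneqq \sum a_{(2)} n \otimes \pi(a_{(1)}) u.$$
The module axiom $(ab) \cdot (n \otimes u) = a \cdot (b \cdot (n \otimes u))$ follows directly from $\Delta(ab) = \Delta(a) \Delta(b)$ together with $\pi$ being an algebra homomorphism and $U$ being an $H$-module. For the restriction to $D$: since $\pi(d_{(1)}) = \epsilon(d_{(1)}) \cdot 1_H$ for $d \in D$, the formula collapses to $d \cdot (n \otimes u) = \sum \epsilon(d_{(1)}) d_{(2)} n \otimes u = dn \otimes u$, recovering the natural $D$-action on the first tensor factor. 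The main obstacle is the verification of the Hopf-module isomorphism $\Upr \otimes_D N \cong U(\fg) \otimes_\bK N$ in the first step, but this is a standard calculation in the theory of Hopf--Galois extensions given the normality of $D$ proven in the preceding lemma.
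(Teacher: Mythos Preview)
Your proof is correct but follows a different route from the paper's. The paper invokes Schneider's Hopf--Galois machinery abstractly: it cites Remark~3.2(3) of \cite{Sch} for the stability of $N$ (your isomorphism $\Upr\otimes_D N\cong N\otimes_\bK U(\fg)$), then a result of \cite{Sch3} giving $N\otimes_\bK E\cong \Upr\otimes_D N$ as right $E$-modules, and finally applies $-\otimes_E U$ to transport the left $\Upr$-structure; the restriction claim is deferred to Theorem~2.2(i) of Witherspoon \cite{Wit}. You instead unfold all of this by hand: you write the stability isomorphism explicitly, construct the embedding $U(\fg)\hookrightarrow E$ via right multiplication on the $U(\fg)$-factor, and then define the $\Upr$-action on $N\otimes U$ by the diagonal formula $a\cdot(n\otimes u)=\sum a_{(2)}n\otimes\pi(a_{(1)})u$, checking the restriction directly using $\pi(d)=\epsilon(d)1$.

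Your approach is more elementary and self-contained; in fact it is essentially the method the paper itself uses earlier in Theorem~\ref{Decomp}, where the $\Upr$-action on $N\otimes\Hom_D(N,M)$ is defined by the same diagonal formula (with $\Hom_D(N,M)$ made into a $U(\fg)$-module via the quotient $\Upr/\Upr D^+$). The paper explicitly flags Lemma~\ref{tens} as an \emph{alternative} approach, inspired by Schneider and Witherspoon, whose payoff is the identification $N\otimes_\bK U\cong(\Upr\otimes_D N)\otimes_E U$. This realisation makes it easier to track how central elements of $\Upr$ act (they act through $E$ by left multiplication), which is exactly what is needed later for Proposition~\ref{prem}. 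Your explicit formula gives the module structure just as well, but the $E$-tensor description is what the sequel exploits.
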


\begin{proof}
	The proof of this lemma can essentially be found in \cite{Wit}, but we include elements of it here for completeness. As described above, $N$ can be extended to a $U^{[r]}(G)$-module. Remark 3.2(3) of \cite{Sch} shows that $N$ is $\Upr$-stable (i.e. there is a left $\Di(G_{(r)})$-linear and right $U(\fg)$-collinear isomorphism $\Upr\otimes_{\tiny \Di(G_{(r)})}N\cong N\otimes_{\bK} U(\fg)$ - see, for example, \cite{Sch} or \cite{Wit} for the $U(\fg)$-comodule structures on these spaces).
	
	It was proved in \cite{Sch3} that $N\otimes_{\bK} E$ is isomorphic to $U^{[r]}(G)\otimes_{\tiny \Di(G_{(r)})} N$ as right $E$-modules, using the $U^{[r]}(G)$-stability of $N$. In particular, by applying $-\otimes_E U$, this implies that \begin{equation}\label{DiffMod}
		N\otimes_{\bK} U\cong (U^{[r]}(G)\otimes_{\tiny \Di(G_{(r)})} N)\otimes_E U\end{equation} can be given the structure of a left $U^{[r]}(G)$-module. Theorem 2.2(i) of \cite{Wit} further shows that this $U^{[r]}(G)$-module structure restricts to the natural $\Di(G_{(r)})$-module structure (although this theorem is not directly applicable to this setting, Witherspoon observed in \cite{Wit} that the result still holds in this situation).
\end{proof}

\begin{rmk}\label{Decomp2}
	Lemma~\ref{tens} gives another way to get a $U^{[r]}(G)$-module structure on $N\otimes\Hom_{\tiny \Di(G_{(r)})}(N,M)$, using the observation that $\Hom_{\tiny \Di(G_{(r)})}(N,M)$ is a left $E$-module (see, for example Theorem 2.2.(ii) in \cite{Wit}). 
\end{rmk}

The key point of the proof of Lemma~\ref{tens} is Equation (\ref{DiffMod}), which in our context gives an isomorphism of $\Upr$-modules
$$N\otimes \Hom_{\tiny D}(N,M)\cong(\Upr\otimes_{\tiny D} N)\otimes_{E}\Hom_{\tiny \Upr}(\Upr\otimes_{\tiny D}N,M).$$
It is straightforward to show that the map
$$\eta_M:(\Upr\otimes_{\tiny D} N)\otimes_{E}\Hom_{\tiny \Upr}(\Upr\otimes_{\tiny D}N,M)\to M,\qquad \eta_M(a\otimes_{\tiny D}n\otimes_{E}\phi)=\phi(a\otimes_{\tiny D}n)$$ is a $\Upr$-module homomorphism, and a similar argument to Theorem~\ref{Decomp} shows that it is an isomorphism. So we once again obtain the result:

\begin{theorem}\label{Decomp3}
	Suppose assumption (R) holds. Let $M$ be an irreducible $U^{[r]}(G)$-module. Then there exists an irreducible $\Di(G_{(r)})$-module $N$ such that $M\cong N\otimes\Hom_{\tiny \Di(G_{(r)})}(N,M)$ as $U^{[r]}(G)$-modules, where the $U^{[r]}(G)$-module structure on $N\otimes\Hom_{\tiny \Di(G_{(r)})}(N,M)$ comes from Lemma~\ref{tens}.
\end{theorem}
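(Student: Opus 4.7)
The plan is to follow exactly the strategy sketched just before the statement: combine the tensor decomposition from Lemma~\ref{tens} with an evaluation map, then argue surjectivity from irreducibility of $M$ and bijectivity from a dimension count. First I would pick an irreducible $\Di(G_{(r)})$-submodule $N$ of $M$. Such an $N$ exists because $M$ is finite-dimensional by Theorem~\ref{findim} and $\Di(G_{(r)})$ is finite-dimensional, so $M$ restricted to $\Di(G_{(r)})$ contains some irreducible submodule. Assumption (R) then guarantees, via \cite[II.3.15]{Jan}, that $N$ extends to a $\Di(G_{(r+1)})$-module, hence (through the surjection $\Upr\twoheadrightarrow \Di(G_{(r+1)})$) to a $\Upr$-module, so that Lemma~\ref{tens} applies and endows $N\otimes_\bK \Hom_D(N,M)$ with a $\Upr$-module structure which restricts on $\Di(G_{(r)})$ to the natural action on $N$ in each summand.

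Next I would invoke Equation~(\ref{DiffMod}) in the form
\[
N\otimes_\bK \Hom_D(N,M)\;\cong\; (\Upr\otimes_D N)\otimes_E \Hom_{\Upr}(\Upr\otimes_D N,\,M)
\]
as $\Upr$-modules, where $E=\End_{\Upr}(\Upr\otimes_D N)^{\mathrm{op}}$, and then define the evaluation map
\[
\eta_M\colon (\Upr\otimes_D N)\otimes_E \Hom_{\Upr}(\Upr\otimes_D N,\,M)\longrightarrow M,\qquad a\otimes_D n\otimes_E\phi\;\longmapsto\;\phi(a\otimes_D n).
\]
A direct check shows $\eta_M$ is well-defined on the tensor product over $E$ (both sides of the $E$-balance give $\phi(e(a\otimes_D n))$) and is $\Upr$-linear since $\phi$ is. For surjectivity: the image is a non-zero $\Upr$-submodule of $M$ (non-zero because the inclusion $N\hookrightarrow M$ comes from a non-zero element of $\Hom_D(N,M)$ whose composition with the $D$-module map $N\hookrightarrow \Upr\otimes_D N$, $n\mapsto 1\otimes n$, is non-zero), hence equals $M$ by irreducibility.

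For injectivity I would run the dimension count already implicit in the proof of Theorem~\ref{Decomp}. Restricting the isomorphism of Lemma~\ref{tens} to $\Di(G_{(r)})$ gives $N\otimes_\bK\Hom_D(N,M)\cong N^{\oplus k}$ as $\Di(G_{(r)})$-modules where $k=\dim_\bK\Hom_D(N,M)$, so the domain of $\eta_M$ has dimension $k\dim_\bK N$. On the other hand $M$ itself decomposes as a $\Di(G_{(r)})$-module into a direct sum of irreducibles, and Schur's lemma together with the existence of the surjection $\eta_M$ forces the isotypic component for $N$ to have multiplicity exactly $k$ and no other isotypic components to occur; in particular $\dim_\bK M=k\dim_\bK N$, matching the domain. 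Hence $\eta_M$ is an isomorphism of $\Upr$-modules.

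The main obstacle I anticipate is not conceptual but bookkeeping: verifying that the $\Upr$-module structure produced via Lemma~\ref{tens} genuinely agrees, under the identification~(\ref{DiffMod}), with the naive tensor-product action used in Theorem~\ref{Decomp}, so that $\eta_M$ really is $\Upr$-linear rather than merely $\Di(G_{(r)})$-linear. This rests on the compatibility noted in \cite[Thm.~2.2]{Wit} combined with $\Upr$-stability of $N$, which is the content of Remark~\ref{Decomp2}; once this is granted the rest is formal.
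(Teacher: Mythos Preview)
Your proposal is correct and follows exactly the paper's approach: invoke Equation~(\ref{DiffMod}), define the evaluation map $\eta_M$, and establish that it is a $\Upr$-isomorphism via surjectivity-from-irreducibility together with the dimension count of Theorem~\ref{Decomp}. The concern in your final paragraph is overcautious: since the $\Upr$-action on $(\Upr\otimes_D N)\otimes_E U$ is simply left multiplication on the first tensor factor and each $\phi\in\Hom_{\Upr}(\Upr\otimes_D N,M)$ is already $\Upr$-linear, the $\Upr$-linearity of $\eta_M$ is immediate and requires no comparison with the Hopf-algebraic action used in Theorem~\ref{Decomp}.
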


\begin{rmk}
	Partial credit for this proof and that of Theorem~\ref{prem} below goes to Dmitriy Rumynin, who was kind enough to share it with the author.
\end{rmk}

We observed above that $\Hom_{\tiny \Di(G_{(r)})}(N,M)$ is a left $E$-module. While at first blush the algebra $E$ may appear strange, it turns out to be an algebra we know very well, as the following lemma shows.

\begin{lemma}\label{End}
	Suppose assumption (R) holds. Let $N\in\Irr(\Di(G_{r}))$ and $E=\End_{U^{[r]}(G)}(U^{[r]}(G)\otimes_{\tiny \Di(G_{r})} N)^{op}$. Then $E\cong U(\fg)$.
\end{lemma}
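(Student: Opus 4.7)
The plan is to exploit the Hopf--Galois structure established in the preceding lemma together with the $\Upr$-stability of $N$ proved in Lemma~\ref{tens}. Writing $D=\Di(G_{(r)})$ and $\pi\colon\Upr\twoheadrightarrow U(\fg)$ for the quotient by $D^{+}\Upr$, the standard stability isomorphism is $\alpha\colon\Upr\otimes_{D}N\xrightarrow{\sim}N\otimes U(\fg)$ given by $u\otimes_{D}n\mapsto\sum u_{(1)}n\otimes\pi(u_{(2)})$, using that $N$ has been extended to a $\Upr$-module as before Theorem~\ref{Decomp}. This normalisation satisfies $\alpha(1\otimes_{D}n)=n\otimes 1$. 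Transporting the left $\Upr$-action through $\alpha$ yields $u\cdot(n\otimes h)=\sum u_{(1)}n\otimes\pi(u_{(2)})h$; for $d\in D$, normality combined with $\pi|_{D}=\epsilon_{D}$ collapses this to $d\cdot(n\otimes h)=(dn)\otimes h$, so $D$ acts only on the first tensor factor.

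Next I would define $\Phi\colon U(\fg)\to E$ by $\Phi(y)=\alpha^{-1}\circ R_{y}\circ\alpha$, where $R_{y}(n\otimes h)=n\otimes hy$ is right multiplication in the $U(\fg)$-factor. Because the $\Upr$-action above only involves left multiplication in that factor, each $R_{y}$ is $\Upr$-linear; and $R_{x}R_{y}=R_{yx}$ makes $y\mapsto R_{y}$ (and hence $\Phi$) an algebra homomorphism into $E=\End_{\Upr}(\Upr\otimes_{D}N)^{op}$. Injectivity is immediate because $\Phi(y)(\alpha^{-1}(n\otimes 1))=\alpha^{-1}(n\otimes y)$ vanishes for all $n$ only when $y=0$.

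For surjectivity I would combine Frobenius reciprocity with Schur's lemma. The adjunction identifies $E$ as a vector space with $\Hom_{D}(N,\Upr\otimes_{D}N)$; transporting by $\alpha$ rewrites this as $\Hom_{D}(N,N\otimes U(\fg))$; and since $D$ acts only on the first factor, this equals $\End_{D}(N)\otimes U(\fg)\cong U(\fg)$ by Schur's lemma on the finite-dimensional irreducible $D$-module $N$. Tracing through, $\Phi(y)$ corresponds to the map $n\mapsto n\otimes y$, which under Schur corresponds to $y\in U(\fg)$, so the composite $U(\fg)\xrightarrow{\Phi}E\xrightarrow{\sim}U(\fg)$ is the identity. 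Hence $\Phi$ is a bijection, and being an algebra homomorphism it is the desired isomorphism $E\cong U(\fg)$.

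The main technical point requiring care is the interplay between Frobenius reciprocity and composition of endomorphisms: the adjunction only yields a vector-space identification, and the induced ring structure on $\Hom_{D}(N,\Upr\otimes_{D}N)$ is a convolution depending delicately on the choice of $\alpha$. The approach above sidesteps this by constructing $\Phi$ directly as an algebra map and then appealing to the vector-space identification only to establish bijectivity.
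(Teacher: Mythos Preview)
Your argument is correct. The stability isomorphism $\alpha$ you write down is the standard one once $N$ has been extended to a $\Upr$-module (as in Lemma~\ref{tens}), and transporting the $\Upr$-action through $\alpha$ does give the diagonal formula you claim, with $D$ acting only on the left tensorand. Right multiplication $R_{y}$ then visibly commutes with this action, so $\Phi$ lands in $E$, and your check that $\Phi$ is multiplicative for the opposite composition is correct. For surjectivity, the only point requiring a word of care is the identification $\Hom_{D}(N,N\otimes U(\fg))\cong\End_{D}(N)\otimes U(\fg)$: this uses that $N$ is finitely generated over the finite-dimensional algebra $D$, so $\Hom_{D}(N,-)$ commutes with the direct sum $N\otimes U(\fg)\cong\bigoplus N$. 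With that in hand, your tracing of $\Phi(y)$ to $y$ under the composite identification is accurate, and bijectivity follows.

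The paper takes a different and much shorter route: it simply invokes Schneider's general result (Remark~3.8 in \cite{Sch}) that when $N$ extends to a $\Upr$-module the inclusion $\bK\subset E$ is a \emph{trivial} $U(\fg)$-crossed product, whence $E\cong\bK\#U(\fg)\cong U(\fg)$. Your argument is essentially a direct, self-contained unpacking of that crossed-product statement in this specific situation. What your approach buys is an explicit description of the isomorphism---namely, $y\in U(\fg)$ acts as right multiplication on the second factor of $N\otimes U(\fg)$---which is precisely the kind of information the paper's subsequent remark says is deferred to the sequel. What the paper's approach buys is brevity, at the cost of relying on familiarity with Schneider's Hopf--Galois machinery.
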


\begin{proof}
	As above, we can observe that the $\Di(G_{r})$-module $N$ can be extended to a $U^{[r]}(G)$-module. Remark 3.8 in \cite{Sch} then tells us that $\bK\subset E$ is a trivial $U(\fg)$-crossed product, and hence that $E\cong \bK\# U(\fg)\cong U(\fg)$.
\end{proof}

\begin{rmk}
	The exact nature of this isomorphism shall be explored more in the sequel \cite{West}. The results there will show, in particular, that the action of $U(\fg)$ on $\Hom_{\tiny \Di(G_{(r)})}(N,M)$ through the quotient \newline $\Upr/\Upr\Di^{+}(G_{(r)})$ and the action of $E$ on $\Hom_{\tiny \Di(G_{(r)})}(N,M)$ described above are compatible with the isomorphism in Lemma~\ref{End}.
\end{rmk}

So we get another way of seeing that an irreducible $U^{[r]}(G)$-module can be decomposed into an irreducible $\Di(G_{(r)})$-module and a $U(\fg)$-module.

What is the benefit of this latter method of proof? Essentially, the initial approach uses the Hopf algebra structure of $U^{[r]}(G)$ to give certain vector spaces a module structure, while the latter approach uses the Hopf algebra structure to get an isomorphism $U(\fg)\cong E$ and then uses just the \emph{algebra} structures to define the modules. Once one knows such an isomorphism exists, it is often-times easier in practice to work with an action which only depends on the algebra structure rather than an action which depends on the whole Hopf algebra structure. 

For example, the second approach means that given a left $U(\fg)$-module $U$ and left $\Di(G_{(r)})$-module $N$, the equation $$N\otimes_{\bK} U\cong (U^{[r]}(G)\otimes_{\tiny \Di(G_{(r)})} N)\otimes_E U$$ allows us to write the $\Upr$-action down very easily. This will have particular use when considering the action of central elements of $\Upr$, such as elements of the form $\delta^{\otimes p}-\delta^p$. Furthermore, the action on $E$ on $\Hom_{\tiny \Di(G_{(r)})}(N,M)$ is often easier to calculate with than the action of $U(\fg)$ on the same.

Let us now consider an application of the decomposition described above. Recall that in the $r=0$ case we have Premet's theorem, under some weak conditions on $p$ and $\fg$ (see \cite{Prem} for details):

\begin{theorem}[Premet's Theorem \cite{Prem}]
	Let $\fg$ and $p$ be as above. Let $\chi\in\fg^{*}$, and let $M$ be a $U_\chi(\fg)$-module. Then $p^{\dim(G\cdot\chi)/2}$ divides $\dim M$.
\end{theorem}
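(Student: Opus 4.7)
This is Premet's theorem from \cite{Prem}, so my proposal follows his original strategy. First I would reduce to the case of irreducible $M$, since the dimension is additive on composition factors in a Jordan--H\"older filtration, and divisibility of each factor's dimension by $p^{\dim(G\cdot\chi)/2}$ implies the same for $\dim M$. The overall plan is then a two-step reduction: use the Jordan decomposition of $\chi$ to pass to the nilpotent case, and then construct an appropriately induced module whose dimension is exactly $p^{\dim(G\cdot\chi)/2}$.

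For the first reduction, write $\chi = \chi_s + \chi_n$ with $\chi_s$ semisimple and $\chi_n$ nilpotent. The centralizer $\fl := \fg^{\chi_s}$ is the Lie algebra of a Levi subgroup $L \subset G$, and results of Friedlander--Parshall in \cite{PF2}, refined by Kac--Weisfeiler, furnish a Morita equivalence between $U_\chi(\fg)$ and $U_{\chi_n}(\fl)$. Under this equivalence, dimensions of corresponding irreducible modules differ by a factor of $p^{\dim(G\cdot\chi_s)/2}$, and one verifies that $\dim(G\cdot\chi) = \dim(G\cdot\chi_s) + \dim(L\cdot\chi_n)$, so the problem collapses to showing the divisibility for nilpotent $\chi \in \fl^*$.

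For nilpotent $\chi$, the key geometric tool is the symplectic form $\omega_\chi(x,y) := \chi([x,y])$ on $\fg/\fg^\chi$, whose half-dimension is exactly $\dim(G\cdot\chi)/2$. The strategy is to produce a \emph{polarization}: a subalgebra $\fm \subset \fg$ containing $\fg^\chi$ whose image in $\fg/\fg^\chi$ is Lagrangian with respect to $\omega_\chi$, and to which $\chi$ restricts as a character on $\fm/[\fm,\fm]$. Then the induced module $U_\chi(\fg) \otimes_{U_\chi(\fm)} \bK_\chi$ has dimension exactly $p^{\dim(G\cdot\chi)/2}$, providing the ``test'' module of the minimal allowed dimension.

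The main obstacle is not the existence of a module of the right dimension, but rather showing that \emph{every} irreducible $U_\chi(\fg)$-module has dimension divisible by $p^{\dim(G\cdot\chi)/2}$. This requires delicate work: in \cite{Prem} it goes through reduction mod $p$ arguments, density of the semisimple locus in $\fg^*$, and careful analysis of the support/characteristic variety of induced modules. I do not see a route around this step using the tools developed in the present paper; the role of the theorem here is to provide the classical benchmark against which the higher analogue (namely Theorem~\ref{prem}) will be proved, presumably by combining Premet's result with the decomposition in Theorem~\ref{Decomp} to separate the $\Di(G_{(r)})$-factor from the $U(\fg)$-factor of the irreducible module.
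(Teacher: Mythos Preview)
The paper does not prove this theorem. It is stated as a citation of Premet's result from \cite{Prem}, with no proof given; the theorem serves only as background for the subsequent Proposition~\ref{prem}, whose proof is itself deferred to the sequel \cite{West}. So there is nothing in the paper to compare your proposal against.

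Your sketch is a reasonable high-level account of the circle of ideas around Premet's theorem, and you correctly flag that the hard step --- showing every irreducible has the divisibility, not just that one module of the right dimension exists --- is where the real content lies. A few cautions on accuracy: the Morita equivalence reducing to a Levi is due to Kac--Weisfeiler and Friedlander--Parshall rather than the other way round, and Premet's actual argument in \cite{Prem} does not proceed via polarizations in the way you describe; it goes through a delicate reduction-mod-$p$ argument and a study of how the support of modules behaves. But since the paper makes no claim to reprove the result, none of this affects the comparison you were asked to make.
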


Observe that the natural extension of this theorem would be that $p^{(r+1)\dim(G\cdot\chi)/2}$ divides $\dim M$ for any $U_\chi^{[r]}(G)$-module $M$. We know that this fails in $G=SL_2$. Furthermore, since any $U_\chi(\fg)$-module can be made into a $U_\chi^{[r]}(G)$-module, this extension will not hold for any $G$ we are interested in. 

Theorem~\ref{Decomp} suggests a way to generalise Premet's theorem for the higher universal enveloping algebras $\Upr$. We announce this proposition here, although defer a key part of proof to the sequel \cite{West}, where some necessary infrastructure will be developed. 

\begin{prop}\label{prem}
	Suppose that $G$ is a connected reductive algebraic group over an algebraically closed field $\bK$ of positive characteristic $p>0$ such that assumption (R) holds. Suppose further that $\fg$ and $p$ are such that Premet's theorem holds. Let $M$ be an irreducible $U^{[r]}_\chi(G)$-module and $N$ an irreducible $\Di(G_{(r)})$-module such that $M\cong N\otimes\Hom_{\tiny \Di(G_{(r)})}(N,M)$ as $\Di(G_{(r)})$-modules. Then $p^{\dim(G\cdot\chi)/2}$ divides $\dim \Hom_{\tiny \Di(G_{(r)})}(N,M)$.
\end{prop}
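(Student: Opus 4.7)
The plan is to equip $V := \Hom_{\Di(G_{(r)})}(N, M)$ with the structure of a $U_\chi(\fg)$-module, at which point Premet's theorem applied to $V$ yields $p^{\dim(G \cdot \chi)/2} \mid \dim_\bK V$, as required.

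By Theorem~\ref{Decomp3} we have a $\Upr$-module isomorphism $M \cong N \otimes V$ in which $V$ inherits a $U(\fg)$-module structure coming either from the quotient $\Upr \twoheadrightarrow \Upr/\Upr\Di^+(G_{(r)}) \cong U(\fg)$ or, equivalently, from the isomorphism $E \cong U(\fg)$ of Lemma~\ref{End}. The task is to upgrade this to a $U_\chi(\fg)$-module structure, that is, to verify that $x^p - x^{[p]}$ acts on $V$ as $\chi(x)^p$ for every $x \in \fg$.

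For this, I would fix a lift $\tilde{x} \in \Di^+_{p^r}(G)$ of $x$ under $\Upsilon_{r,r}$ (for example $\ve_\alpha^{(p^r)}$ lifts $\ve_\alpha$, and $\binom{\vh_t}{p^r}$ lifts $\vh_t$) and exploit three properties of the central element $\xi_r(\tilde{x}) \in Z(\Upr)$: (i) by definition of $U^{[r]}_\chi(G)$ it acts on $M$ as the scalar $\chi(\tilde{x})^p = \chi(x)^p$; (ii) it maps to $0$ in $\Di(G_{(r+1)}) = \Upr/\langle \delta^{\otimes p} - \delta^p \rangle$ and hence annihilates $N$; and (iii) a direct calculation (using $(\ve_\alpha^{(p^r)})^p = 0$ and $\binom{\vh_t}{p^r}^p = \binom{\vh_t}{p^r}$ in $\Di(G)$) identifies $\Upsilon_{r,r}(\xi_r(\tilde{x}))$ with $x^p - x^{[p]}$ in $U(\fg)$. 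Applying $\xi_r(\tilde{x})$ to $n \otimes v \in N \otimes V$ via the coproduct, and using the congruence
\[ \Delta(\xi_r(\tilde{x})) \equiv \xi_r(\tilde{x}) \otimes 1 + 1 \otimes \xi_r(\tilde{x}) \pmod{\Upr \otimes \Upr\Di^+(G_{(r)})}, \]
the cross terms vanish on $N \otimes V$ (since $\Upr\Di^+(G_{(r)})$ kills $V$), and properties (i)--(iii) then collapse the identity $\xi_r(\tilde{x})(n \otimes v) = \chi(x)^p(n \otimes v)$ to $(x^p - x^{[p]}) v = \chi(x)^p v$ for all $v \in V$. Thus $V$ is a $U_\chi(\fg)$-module, and Premet's theorem delivers the divisibility.

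The main obstacle is the coproduct congruence above. For $r = 0$ this is immediate, since $\xi_0(x)$ is primitive in $U(\fg)$, but for $r > 0$ one must expand $\Delta(\tilde{x})^p$ inside the commutative subalgebra of $\Upr \otimes \Upr$ generated by factors of the form $\ve_\alpha^{(i)} \otimes \ve_\alpha^{(j)}$ (or the analogues for $\binom{\vh_t}{k}$), use that only the pure $p$-th powers survive the multinomial expansion in characteristic $p$, and verify that all surviving cross terms lie in $\Upr \otimes \Upr\Di^+(G_{(r)})$. This Hopf-algebraic bookkeeping, together with the passage from the canonical lifts to arbitrary $x \in \fg$ via the semilinearity of $\xi_r$, is exactly the kind of infrastructure developed systematically around the normal Hopf subalgebra inclusion $\Di(G_{(r)}) \hookrightarrow \Upr$ in the sequel~\cite{West}.
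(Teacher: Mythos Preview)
Your proposal is correct and matches the paper's own argument almost exactly: both reduce the statement to showing that $x^p - x^{[p]}$ acts on $\Hom_{\Di(G_{(r)})}(N,M)$ as the scalar $\chi(x)^p$, and both explicitly defer this verification to the sequel~\cite{West}. The only difference is one of emphasis: you work primarily through the Hopf-algebraic action on $N\otimes V$ and the coproduct congruence for $\xi_r(\tilde{x})$, whereas the paper phrases the remaining step in terms of the isomorphism $U(\fg)\cong E$ of Lemma~\ref{End} and asks that $\ve_\alpha^p$ and $\vh_t^p-\vh_t$ correspond to the central multiplications $\rho((\ve_\alpha^{(p^r)})^{\otimes p})$ and $\rho(\binom{\vh_t}{p^r}^{\otimes p}-\binom{\vh_t}{p^r})$; the paper itself remarks that these two formulations are equivalent.
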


%

%
%
%

Whether one approaches this question through Theorem~\ref{Decomp} and Premet's theorem, or through Theorem~\ref{Decomp3}, Lemma~\ref{End} and Premet's theorem, all that remains is to show that for $x\in\fg$, $x^p-x^{[p]}$ acts on $\Hom_D(N,M)$ as $\chi(x)^{p}$. 

For the latter technique, given $\delta\in \Di_{p^r}^{+}(G)$, we know that $\delta^{\otimes p}-\delta^p$ is central in $\Upr$. Hence, the map $\rho(\delta):\Upr\otimes_{D}N\to \Upr\otimes_{D}N$ given by left multiplication by $\delta^{\otimes p}-\delta^p$ is a $\Upr$-module endomorphism of $\Upr\otimes_{D}N$, and so lies inside $E$. However, as we know that $M$ is a $U^{[r]}_\chi(G)$-module, $\rho(\delta)\in E$ acts on $\Hom_{\tiny D}(N,M)$ as multiplication by $\chi(\delta)^p$.

Hence, to show that $\Hom_{\tiny D}(N,M)$ is a $U_\chi(\fg)$-module, we just need that, for $\alpha\in\Phi$, $\ve_{\alpha}^p$ maps to $\rho((\ve_{\alpha}^{(p^r)})^{\otimes p})$ and, for $1\leq t\leq d$, $\vh_{t}^p-\vh_{t}$ maps to $\rho( \binom{\vh_{t}}{p^r}^{\otimes p}-{\binom{\vh_{t}}{p^r}})$ under the isomorphism $U(\fg)\cong E$. 

As a result, completing the proof of this proposition relies on a more detailed understanding of the isomorphism $U(\fg)\cong E$. In fact, this is precisely the same question as understanding how the elements $\ve_{\alpha}^p\in U(\fg)$, for $\alpha\in\Phi$, and $\vh_{t}^p-\vh_{t}$, for $1\leq t\leq d$, act on $\Hom_{\tiny \Di(G_{(r)})}(N,M)$ under the original action. This shall be explored in the sequel \cite{West}.

\end{document}